\documentclass[a4paper,11pt]{amsart}
\usepackage{amsfonts,stmaryrd,amssymb,amsmath,a4wide,verbatim}
\usepackage[british]{babel}
\usepackage{showkeys}
\usepackage[active]{srcltx}\ifx\pdfoutput\undefined
    \pdfcompresslevel=9
    \usepackage[pdftex]{graphicx}
    \DeclareGraphicsExtensions{.jpg, .png , .pdf, .eps}
\else
    \usepackage[dvips]{graphicx}
    \DeclareGraphicsExtensions{.eps, .ps, .eps.gz, ps.gz}
\fi


\textheight 8.5in
\textwidth 5.7in
\makeatletter
\@addtoreset{equation}{section}\makeatother

\newcommand{\scal}[2]{\left\langle #1 , #2\right\rangle}
\newcommand{\scalpoly}[2]{\left( #1 , #2\right)_{{\S}^n}}

\newcommand{\normdpoly}[1]{\left\| #1\right\|_{\S^n}}
\newcommand{\normdfonc}[1]{\| #1\|_2}

\def\tr{{\rm tr}\,}

\newtheorem{theorem}{Theorem}[section]
\newtheorem{example}[theorem]{Example}
\newtheorem{lemma}[theorem]{Lemma}

\newtheorem{proposition}[theorem]{Proposition}
\newtheorem{remark}[theorem]{Remark}

\def\R{\mathbb{R}}
\def\N{\mathbb{N}}
\def\S{\mathbb{S}}
\def\insm{\displaystyle\int_{M}}
\def\inssn{\int_{\S^n}}
\def\vol{dv}
\def\voleps{dv_{g_{\varepsilon}}}
\def\Vol{{\rm Vol}\,}
\def\H{{\rm H}}
\def\B{{\rm B}}
\def\xb{\overline{X}}

\def\la{\lambda_1}
\def\can{\text{can\ }}

\def\muks{\mu_k}
\def\hkm{{\mathcal H}^k(M)}
\def\hkr{{\mathcal H}^k(\R^{n+1})}
\def\coef{\fieps'^2+(1\pm\fieps)^2}
\def\coeff{\varphi'^2+(1+\varphi)^2}
\def\fieps{\varphi_{\varepsilon}}

\def\fiteps{\tilde{\varphi}_{\varepsilon}}
\def\mpeps{M_{\varepsilon}^+}
\def\mmeps{M_{\varepsilon}^-}
\def\meps{M_{\varepsilon}}

\def\ieps{I_{\varepsilon}}
\def\xeps{X_{\varepsilon}}
\def\Heps{H_{\varepsilon}}
\def\heps{h^{\pm}_{\varepsilon}}
\def\beps{\B _{\varepsilon}}
\def\hepsi{h^{\pm}_{1,\varepsilon}}
\def\hepsd{h^{\pm}_{2,\varepsilon}}
\def\hepst{h^{\pm}_{3,\varepsilon}}

\begin{document}
\title[]{Spectrum of hypersurfaces with small extrinsic radius or large $\lambda_1$ in Euclidean spaces}

\subjclass[2000]{53A07, 53C21}

\keywords{Mean curvature, Reilly inequality, Laplacian, Spectrum, pinching results, hypersurfaces}

\author[E. AUBRY, J.-F. GROSJEAN]{Erwann AUBRY, Jean-Fran\c cois GROSJEAN}

\address[E. Aubry]{LJAD, Universit\'e de Nice Sophia-Antipolis, CNRS; 28 avenue Valrose, 06108 Nice, France}
\email{eaubry@unice.fr}

\address[J.-F. Grosjean]{Institut \'Elie Cartan de Lorraine (Math\'ematiques), Universit\'e de Lorraine, B.P. 239, F-54506 Vand\oe uvre-les-Nancy cedex, France}
\email{jean-francois.grosjean@univ-lorraine.fr}

\date{\today}

\begin{abstract} In this paper, we prove that Euclidean hypersurfaces with almost extremal extrinsic radius or $\lambda_1$ have a spectrum that asymptotically contains the spectrum of the extremal sphere in the Reilly or Hasanis-Koutroufiotis Inequalities. We also consider almost extremal hypersurfaces which satisfy a supplementary bound on $v_M\|\B\|_\alpha^n$ and show that their spectral and topological properties depends on the position of $\alpha$ with respect to the critical value $\dim M$. The study of the metric shape of these extremal hypersurfaces will be done in \cite{AG1}, using estimates of the present paper. 
\end{abstract}

\maketitle

\section{Introduction}

Throughout the paper, $X{:}\,M^n\to\R^{n+1}$ is a closed, connected, immersed Euclidean hypersurface (with $n\geqslant 2)$. We set $v_M$ its volume, $\B$ its second fundamental form, $\H=\frac{1}{n}\tr \B$ its mean curvature, $r_{M}$ its extrinsic radius (i.e. the least radius of the Euclidean balls containing $M$), $(\lambda_i^M)_{i\in\N}$ the non-decreasing sequence of its eigenvalues labelled with multiplicities and $\xb:=\frac{1}{v_M}\int_MX\vol$. For any function $f:M\to\R$, we set $\|f\|_\alpha^\alpha=\frac{1}{v_M}\int_M|f|^\alpha\vol$.
\bigskip

The Hasanis-Koutroufiotis inequality asserts that
\begin{equation}\label{rext}
 r_{M}\|\H\|_2\geqslant1,
\end{equation}
with equality if and only if $M$ is the Euclidean sphere $S_M$  with center $\overline{X}$ and radius $\frac{1}{\|\H\|_2}$.

The Reilly inequality asserts that
\begin{equation}\label{lambda}
\la^M\leqslant n\|\H\|_2^2,
\end{equation}
once again with equality if and only if $M$ is the sphere $S_M$ (we give some short proof of these inequalities in section \ref{prel}). 

Our aim is to study the spectral properties of the hypersurfaces that are almost extremal for each of this Inequalities. The results and estimates of this paper are used in \cite{AG1} to study the metric shape of the almost extremal hypersurfaces.

We set  $\mu_k^{S_M}=k(n{+}k{-}1)\|\H\|_2^2$ the $k$-th eigenvalue of $S_M$ (labelled without multiplicities) and $m_k$ its multiplicity.
Throughout the paper we shall adopt the notation that $\tau(\varepsilon|n,\cdots)$ is a positive function which depends on $n,\cdots$
and which converges to zero with $\varepsilon\to 0$ when $n,\cdots$ are fixed. 
 
\begin{theorem}\label{maintheo}
For any immersed hypersurface $M\hookrightarrow\R^{n+1}$ with $r_{M}\|\H\|_2\leqslant1+\varepsilon$ (or with $\frac{n\|\H\|_2^2}{\lambda_1^M}\leqslant 1+\varepsilon$) and for any $k\leqslant\frac{1}{\tau(\varepsilon|n)}$  the interval 
$[(1-\tau(\varepsilon|n))\mu^{S_M}_k,(1+\tau(\varepsilon|n))\mu_k^{S_M}]$
contains at least $m_k$ eigenvalues of $M$ counted with multiplicities.
\end{theorem}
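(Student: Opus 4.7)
\emph{Strategy and $L^2$-proximity.} The plan is to use, as quasi-modes on $M$, the pullbacks $P\circ X$ of homogeneous harmonic polynomials $P$ on $\R^{n+1}$, since these are exact Laplacian eigenfunctions on the extremal sphere $S_M$. First I would derive from either pinching hypothesis the quantitative $L^2$-proximity of $M$ to $S_M$: the equality cases of (\ref{rext}) and (\ref{lambda}) being characterised by $M=S_M$, one expects, by tracking the slack in their proofs (Section \ref{prel}), that
$$\bigl\||X-\xb|^2-\|\H\|_2^{-2}\bigr\|_1\leqslant\tau(\varepsilon|n),$$
together with higher $L^p$-concentration of $|X-\xb|$ near $\|\H\|_2^{-1}$. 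After translating $\xb$ to $0$ and rescaling to $\|\H\|_2=1$, this says $|X|^2\to 1$ in $L^1(M)$.

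\emph{Rayleigh quotients and cross products.} For $P\in\mathcal H^j$ homogeneous harmonic of degree $j$ on $\R^{n+1}$, I combine the identity $|\nabla^M(P\circ X)|^2=|\nabla P|^2\circ X-(dP(\nu))^2\circ X$ with Euler's formula $dP(X)=jP$, the fact that both $P^2$ and $|\nabla P|^2$ are polynomials, and $\Delta_M X=-n\H\nu$. Integrating against $\vol$ and using the concentration of Step 1 I would obtain, with $\tau=\tau(\varepsilon|n,j,j')$: (i) $\|P\circ X\|_{L^2(M)}^2=(1+\tau)\|P\|_{L^2(\S^n)}^2$; (ii) the Rayleigh quotient of $P\circ X$ on $M$ differs from $\muks^{S_M}$ by relative error $\tau$; (iii) for $Q\in\mathcal H^{j'}$ with $j\neq j'$, the pairing $\int_M(P\circ X)(Q\circ X)\vol$ is $\tau$-small relative to $\|P\|_{L^2(\S^n)}\|Q\|_{L^2(\S^n)}$. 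The non-trivial ingredient is the control of $\int_M(dP(\nu))^2\vol$, which I expect to obtain from the Reilly-type identities already used in Step 1 together with integration by parts against the tangential gradient of $P\circ X$.

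\emph{Upper bound via min-max.} Setting $V_k:=\bigoplus_{j\leqslant k}\{P\circ X:P\in\mathcal H^j\}$, point (iii) makes $V_k$ an almost-orthogonal sum of dimension $\nu_k:=\sum_{j\leqslant k}m_j$ (for $\varepsilon$ sufficiently small), and by (ii) every Rayleigh quotient on $V_k$ is $\leqslant(1+\tau)\muks^{S_M}$. The min-max principle then yields $\lambda_{\nu_k-1}^M\leqslant(1+\tau)\muks^{S_M}$, with $\tau$ uniform for $k\leqslant 1/\tau(\varepsilon|n)$ after careful bookkeeping of the combinatorial constants hidden in (i)--(iii).

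\emph{Spectral gap --- main obstacle.} The matching lower bound $\lambda_{\nu_{k-1}}^M\geqslant(1-\tau)\muks^{S_M}$ is the hardest step; together with the upper bound it forces the required $m_k$ eigenvalues into the interval $[(1-\tau)\muks^{S_M},(1+\tau)\muks^{S_M}]$. By the dual min-max formula it suffices to show that any $f\in H^1(M)$ with $\int_M f\,(P\circ X)\vol=0$ for every $P\in\bigoplus_{j<k}\mathcal H^j$ satisfies $\int_M|\nabla f|^2\vol\geqslant(1-\tau)\muks^{S_M}\int_Mf^2\vol$. My approach is an approximate Parseval identity on $M$ inherited from $\S^n$: by Step 2 the bilinear pairings $(f,P)\mapsto \int_M f\,(P\circ X)\vol$ behave as the Fourier coefficients on $\S^n$ up to $\tau$-errors, so the $L^2$-orthogonality of $f$ to $V_{k-1}$ forces its ``spectral mass'' into the high-frequency block $\bigoplus_{j\geqslant k}\mathcal H^j$, on which the Rayleigh quotient is $\geqslant(1-\tau)\muks^{S_M}$. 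Establishing this Parseval-type estimate with errors uniform in $k\leqslant 1/\tau(\varepsilon|n)$ is the principal technical challenge of the proof, since the error constants in Step 2 depend a priori on $j$ and must be controlled uniformly up to the cutoff.
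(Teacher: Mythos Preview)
Your Step~4 contains a genuine error of conception: the lower bound $\lambda_{\nu_{k-1}}^M\geqslant(1-\tau)\mu_k^{S_M}$ that you aim to prove is \emph{false} in general. Theorem~\ref{ctrexple4} and Example~\ref{ctrexple1} in the paper produce almost extremal hypersurfaces whose spectrum contains, in addition to approximate copies of the sphere eigenvalues, arbitrarily many extra eigenvalues at essentially arbitrary locations (for the $r_M$-pinching one can even have several eigenvalues close to~$0$). Hence the eigenvalue \emph{indices} on $M$ need not line up with those of $S_M$, and the dual min-max inequality you invoke cannot hold. The ``approximate Parseval'' argument you sketch cannot rescue this: the functions $P\circ X$ do not span $L^2(M)$ even approximately, precisely because $M$ may carry spectral mass invisible to the restriction of harmonic polynomials.

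The paper bypasses this completely by proving a stronger estimate than a Rayleigh-quotient bound: for each $P\in\hkm$ (multiplied by a cutoff $\varphi$ localised to the annulus $|X|\approx\|\H\|_2^{-1}$) one has
\[
\bigl\|\Delta^M(\varphi P)-\mu_k^{S_M}\varphi P\bigr\|_2\leqslant\tau\,\mu_k^{S_M}\|\varphi P\|_2
\]
(Lemma~\ref{almosteigenf}). This is a genuine quasi-mode estimate, and it yields the eigenvalue count directly by spectral decomposition: if fewer than $m_k$ eigenvalues of $M$ lay in the interval $[(1-2\tau)\mu_k^{S_M},(1+2\tau)\mu_k^{S_M}]$, one could choose a nonzero $\varphi P$ orthogonal to all the corresponding eigenfunctions; writing $\varphi P=\sum_i f_i$ in the eigenbasis, every contributing $\lambda_i$ would satisfy $|\lambda_i-\mu_k^{S_M}|>2\tau\mu_k^{S_M}$, forcing $\|\Delta(\varphi P)-\mu_k^{S_M}\varphi P\|_2>2\tau\mu_k^{S_M}\|\varphi P\|_2$, a contradiction. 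No min-max, no lower bound on individual $\lambda_i^M$, is needed.

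Two further points. First, your Step~2 estimate on the Rayleigh quotient is strictly weaker than what is required; obtaining the $L^2$ bound on $\Delta(\varphi P)-\mu_k^{S_M}\varphi P$ uses formula~(\ref{fondhess}) together with the estimates $\|\nu-\H X\|_2\leqslant\tau$ and $\bigl\||\H|-\|\H\|_2\bigr\|_2\leqslant\tau\|\H\|_2$ (Lemmas~\ref{banalite}--\ref{estinormphiz}), and the pointwise bounds on $P$, $dP$, $\nabla^0 dP$ from Lemmas~\ref{Pcarre}--\ref{Hess}. Second, the cutoff $\varphi$ is not a cosmetic device: under the $\lambda_1$-pinching there is no $L^\infty$ bound on $|X|$, so $|P(X)|$ can be unbounded and neither your norm comparison~(i) nor the quasi-mode estimate can be obtained for the bare $P\circ X$.
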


Note that by Theorem \ref{maintheo}, almost extremal hypersurfaces for the Reilly inequality must satisfy 
$\frac{n\|\H\|_2^2}{1+\varepsilon}\leqslant\lambda_1^M\leqslant\cdots\leqslant\lambda_{n+1}^M\leqslant\bigl(1+\tau(\varepsilon|n)\bigr)n\|\H\|_2^2$
and so must have at least $n+1$ eigenvalues close to $\lambda_1^{S_M}=n\|\H\|_2^2$. This is very different from the almost extremal manifolds for the Lichnerowicz Inequality in positive Ricci curvature (see \cite{Au}).
\medskip

The proof of Theorem \ref{maintheo} is based on estimates for the restrictions to $M$ of homogeneous, harmonic polynomials of the ambient space $\R^{n+1}$. Such a polynomial of degree $k$ satisfies the equality $\Delta^{S_M}P=n\|\H\|_2^2dP(X)+\|\H\|_2^2D^0dP(X,X)=\mu_k^{S_M}P$
whereas its restriction on $M$ satisfies
$\Delta^MP=n\H dP(\nu)+D^0dP(\nu,\nu)$
where $D^0dP$ is the Euclidean Hessian and $\nu$ a local unit, normal vector to $M$. We prove that on almost extremal hypersurfaces, the quantities $\nu-\H X$ and $|\H|-\|\H\|_2$ are small in $L^2$-norms, which, by careful computations, gives essentially the following estimates (see Lemmas \ref{almosteigenf} and \ref{Ppresquortho2})
\begin{align}
\bigl|\|\varphi P\|_{L^2(M)}^2-\|\varphi P\|^2_{L^2(S_M)}\bigr|&\leqslant\tau(\varepsilon|n,k)\|\varphi P\|_{L^2(S_M)},\\
\|\Delta^M\varphi P-\mu_k^{S_M}\varphi P\|_{L^2(M)}&\leqslant\tau(\varepsilon|n,k)\|\varphi P\|_{L^2(M)},
\end{align}
where $\varphi$ is a cut function localized near $S_M$.
The main difficulty in proving this estimate is that there is no known good local control of the measure on $M$ involving only the $L^2$-norm of the mean curvature.
\medskip

Theorem \ref{maintheo} does not say that the spectrum of almost extremal hypersurfaces is close to the spectrum of $S_M$, but only that the spectrum of $S_M$ asymptotically appears in the spectrum of $M$. Our next result shows that it is optimal in dimension larger than $2$, even under a supplementary (not too strong) bound on the sectional curvature.

\begin{theorem}\label{ctrexple4}
 Let $M_1,M_2\hookrightarrow\R^{n+1}$ be two immersed compact submanifolds of dimension $m\geqslant 3$, $M_1\#M_2$ be their connected sum and $F$ be any closed subset of $]0,+\infty[$ containing ${\rm Sp}(M_1)\setminus\{0\}$. Then there exists a sequence of immersions $i_k:M_1\#M_2\hookrightarrow\R^{n+1}$ such that\\

1) $i_k(M_1\# M_2)$ converges to $M_1$ in Hausdorff topology,

2) the curvatures of $i_k(M_1\# M_2)$ satisfy
\begin{align*}
&\int_{i_k(M_1\#M_2)}|\H|^\alpha\to \int_{M_1}|\H|^\alpha\quad\quad\mbox{for any }\alpha\in[1,m),\\
&\int_{i_k(M_1\#M_2)}|\B|^\alpha\to \int_{M_1}|\B|^\alpha\quad\quad\mbox{for any }\alpha\in[1,m),
\end{align*}

3) $\cap_{k\in\N}\overline{\cup_{l\geqslant k}{\rm Sp}\bigl(i_l(M_1\#M_2)\bigr)}=F\cup\{0\}$,

4) $\Vol(i_k(M_1\#M_2))\to\Vol M_1$.
\end{theorem}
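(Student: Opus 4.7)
The idea is to attach to $M_1$, near a fixed base point $p\in M_1$, a ``pendant'' of small extrinsic diameter whose spectrum carries a prescribed collection of values in $F$. Since connected sum with $S^m$ is trivial ($M_2\# S^m\cong M_2$), we may freely view $M_1\# M_2$ as $M_1\# M_2\# S^m\#\cdots\# S^m$, so that the pendant is allowed to contain any prescribed number of extra ``bubbles'' diffeomorphic to $S^m$.

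First I would fix a countable subset $\{f_j\}_{j\in\N}\subset F$ that is dense in $F$ and contains ${\rm Sp}(M_1)\setminus\{0\}$. For each $k$ I would build $i_k$ by immersing the $M_1$-part as $M_1$ with a small disk removed near $p$, the genuine $M_2$-summand as a small shrunken copy of $M_2$, and attaching to this $M_2$-summand $k$ extra bubbles $B_1^k,\ldots,B_k^k$ joined by thin cylindrical handles. Each $B_j^k$ is a model region (a scaled round sphere, or a capped coiled tube) designed so that its first Dirichlet eigenvalue, with Dirichlet condition on the disk where its handle attaches, is close to $f_j$. The whole pendant (bubbles, handles, shrunken $M_2$, and connection to $M_1$) is immersed inside the Euclidean ball $B(p,\varepsilon_k)$ for some $\varepsilon_k\to 0$.

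Items (1), (2) and (4) then follow essentially by construction: Hausdorff convergence to $M_1$ holds because the pendant is contained in $B(p,\varepsilon_k)$, while volume and curvature-integral convergence reduce to showing that the finitely many handles and bubbles contribute vanishingly to $\int|\H|^\alpha$, $\int|\B|^\alpha$ and $\Vol$ for every $\alpha\in[1,m)$, which is a direct computation once the bubble and handle dimensions are fixed. For item (3), I would apply Dirichlet--Neumann bracketing to the decomposition $M_1\sqcup(\text{handles})\sqcup(\text{bubbles})$: closing the thin necks raises eigenvalues (Dirichlet bound), opening them lowers eigenvalues (Neumann bound), and the gap between the two estimates vanishes as the handle radii go to zero. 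The low eigenvalues of $i_k(M_1\# M_2)$ are thereby shown to lie close to ${\rm Sp}(M_1)\cup\{f_1^{(k)},\ldots,f_k^{(k)}\}$, where $f_j^{(k)}$ is the actual first Dirichlet eigenvalue of $B_j^k$; by density of $\{f_j\}$ in $F$, the $\limsup$ of the spectra as sets equals $F\cup\{0\}$.

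The hard part is the simultaneous quantitative control, on each bubble $B_j^k$, of extrinsic diameter $\leq\varepsilon_k$, first Dirichlet eigenvalue $\approx f_j$, and all $L^\alpha$ curvature integrals for $\alpha<m$. A naive coiled tube of length $\ell_j\sim\pi/\sqrt{f_j}$ and cross-sectional radius $r_j$ contributes $\int|\H|^\alpha\sim\ell_j\, r_j^{m-1-\alpha}$, which diverges as $r_j\to0$ whenever $\alpha\in(m-1,m)$. Avoiding this forces a more subtle bubble design: either splitting each small eigenvalue among several wider tubes, or using wrinkled/corrugated smooth immersions in the spirit of Nash--Kuiper that fold an intrinsically large model sphere into a small Euclidean ball without concentrating mean curvature. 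The hypothesis $m\geq 3$ plausibly enters at this step, leaving enough intrinsic dimension for the necessary corrugations and enough room in the partition of unity used in the Dirichlet--Neumann argument for the bracketing errors to be uniform in $j$.
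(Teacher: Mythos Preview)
Your architecture (attach a small pendant to $M_1$, design it to carry prescribed Dirichlet eigenvalues, iterate and take a density argument for general $F$) matches the paper's. But the step you flag as ``the hard part'' is where your proposal has a genuine gap. You correctly observe that a tube of fixed intrinsic length and shrinking radius has diverging $\int|\H|^\alpha$ for $\alpha\in(m-1,m)$, and you propose Nash--Kuiper-type corrugations as a remedy. That does not work: Nash--Kuiper produces $C^1$ immersions with no control on second derivatives, and there is no known smooth construction that folds a sphere of fixed intrinsic size into a ball of radius $\varepsilon$ while keeping $\int|\H|^\alpha$ small for all $\alpha<m$.

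The paper avoids this obstacle by a two-scale trick. The pendant $T_\varepsilon$ is not a degenerating tube; it is $\varepsilon$ times a \emph{fixed-geometry} model $T'_{\nu_\varepsilon}$, so that $\int_{T_\varepsilon}|\B|^\alpha=\varepsilon^{m-\alpha}\int_{T'_{\nu_\varepsilon}}|\B|^\alpha\to 0$ for $\alpha<m$ by pure scaling. The difficulty then shifts: under the $\varepsilon$-rescaling, eigenvalues blow up like $\varepsilon^{-2}$, so to hit a prescribed $\lambda$ one must arrange $\lambda_1^D(T'_{\nu_\varepsilon})=\varepsilon^2\lambda$, i.e.\ make the \emph{unscaled} model have arbitrarily small first Dirichlet eigenvalue. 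This is where $M_2$ (or an extra $\S^m$) is actually used: one glues a copy of $M_2$ to a fixed tube $D$ through a thin neck of width $\nu$. As $\nu\to 0$ the Dirichlet spectrum of this union converges to $\{\text{Dirichlet spectrum of }D\}\cup{\rm Sp}(M_2)$, and since $0\in{\rm Sp}(M_2)$ one can, by continuity in $\nu$, tune $\lambda_1^D$ to the target $\varepsilon^2\lambda$ while keeping the curvature integrals of $T'_{\nu_\varepsilon}$ uniformly bounded. Finally, the hypothesis $m\geq 3$ does not enter through corrugations; it enters in the spectral analysis of the glued manifold, via a local trace inequality on the cone-like transition region (the bound $\int_r^{a}(3+r/\varepsilon)^{m-1}(3+s/\varepsilon)^{1-m}\,ds\leq C(3\varepsilon+r)$ needs $m\geq 3$), which replaces your Dirichlet--Neumann bracketing and is what allows control of eigenfunctions across the two collapsing scales.
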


To get almost extremal submanifolds from the previous result, we just have to consider the case where $M_1=\S^n$ (and $F\subset[n,+\infty[$). It gives almost extremal hypersurfaces for the Reilly or Hasanis-Koutroufiotis Inequalities with the topology of any immersible Euclidean hypersurface, a spectrum as Hausdorff-close as we want of any closed set containing ${\rm Sp}(\S^n)$ (and contained in $[n,+\infty[$), even if we assume a bound on $v_M\|\B\|_\alpha^n$ for any $\alpha<n$.

On the other hand, if we assume a bound on $\|\B\|_\alpha$ with $\alpha>n$, we prove in \cite{AG1} that the almost extremal hypersrfaces converge to $S_M$ in Hausdorff distance, which combined with the $\mathcal{C}^{1,\beta}$ pre-compactness theorem of \cite{Dell} (or a Moser iteration as in the previous version of this paper \cite{AGR1}) imply the following stability in Lipschitz distance.

\begin{theorem}\label{Lipschitz} Let $n<\alpha\leqslant\infty$. Any immersed hypersurface $M\hookrightarrow \R^{n+1}$ with $v_M\|\B \|_\alpha^n\leqslant A$ and $r_{M}\|\H\|_2\leqslant1+\varepsilon$ (or with $v_M\|\B \|_\alpha^n\leqslant A$ and $\frac{n\|\H\|_2^2}{\lambda_1}\leqslant 1+\varepsilon $) is diffeomorphic to $S_M$ and satisfies $d_L(M,S_M)\leqslant\tau(\varepsilon|n,\alpha,A)$. In particular, we have $|\lambda^M_k-\lambda_k^{S_M}|\leqslant\tau(\varepsilon|k,n,\alpha,A)$ for any $k\in\N$.
\end{theorem}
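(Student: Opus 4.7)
The plan is to argue by contradiction, combining the Hausdorff convergence established in \cite{AG1} with the $\mathcal{C}^{1,\beta}$ pre-compactness theorem of Delladio \cite{Dell}.

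First I would normalize by a homothety so that $\|\H\|_2=1$: then $S_M$ is the unit sphere centered at $\overline{X}$, and the scale-invariant quantities $v_M\|\B\|_\alpha^n$, $r_M\|\H\|_2$ and $n\|\H\|_2^2/\lambda_1^M$ are all preserved. Suppose for contradiction that the Lipschitz estimate fails; then there exist $\varepsilon_0>0$ and a sequence $M_k\hookrightarrow\R^{n+1}$ satisfying the hypotheses with $\varepsilon_k\to 0$ but with $d_L(M_k,S_{M_k})\geqslant\varepsilon_0$. Translating so that $\overline{X}_k=0$, each $S_{M_k}$ coincides with the standard unit sphere $\S^n\subset\R^{n+1}$.

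By \cite{AG1} the sequence $M_k$ converges to $\S^n$ in Hausdorff distance. Since $\alpha>n$, the $\mathcal{C}^{1,\beta}$ pre-compactness theorem of \cite{Dell} applies, so after extraction the $M_k$ converge in $\mathcal{C}^{1,\beta}$ topology to an immersed $\mathcal{C}^{1,\beta}$ hypersurface $M_\infty$; since $\mathcal{C}^{1,\beta}$ convergence refines Hausdorff convergence, $M_\infty=\S^n$. But $\mathcal{C}^{1,\beta}$ convergence onto $\S^n$ entails $d_L(M_k,\S^n)\to 0$, contradicting the choice of the $M_k$. This proves $d_L(M,S_M)\leqslant \tau(\varepsilon|n,\alpha,A)$, and the diffeomorphism claim follows directly from the $\mathcal{C}^{1,\beta}$ convergence to a sphere.

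The eigenvalue estimate is then a consequence of the stability of the spectrum under Lipschitz convergence: a bi-Lipschitz diffeomorphism $\Psi:S_M\to M$ with dilation $1+\eta$ pulls the metric of $M$ back to one equivalent to $g_{S_M}$ up to factors $(1+\eta)^{\pm 2}$, so by the min-max characterization the Rayleigh quotients of pulled-back test functions differ multiplicatively by at most $(1+\eta)^{c(n)}$. This gives $|\lambda_k^M-\lambda_k^{S_M}|\leqslant c(n,k)\eta\,\lambda_k^{S_M}$ for small $\eta$, and combined with $\eta\leqslant\tau(\varepsilon|n,\alpha,A)$ yields the stated spectral bound.

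The main non-trivial input is the Hausdorff convergence from \cite{AG1}, which genuinely exploits the supplementary $L^\alpha$ bound on $\B$: Theorem \ref{ctrexple4} shows that, without such a bound, almost extremal hypersurfaces may accumulate on wildly different limits, so this step cannot be deduced from the material of the present paper alone. Alternatively, as the authors indicate, one can bypass \cite{Dell} by writing $M$ as a normal graph over $S_M$ (enabled by the Hausdorff closeness) and running a Moser iteration on the resulting prescribed-mean-curvature equation to upgrade Hausdorff closeness to $\mathcal{C}^{1,\beta}$, and hence to Lipschitz, closeness, as in the previous version \cite{AGR1}.
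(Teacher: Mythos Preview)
Your proposal is correct and follows exactly the approach the paper indicates: the paper does not give a detailed proof of this theorem but only the one-sentence sketch in the introduction (Hausdorff convergence from \cite{AG1} combined with Delladio's $\mathcal{C}^{1,\beta}$ pre-compactness, or alternatively the Moser iteration of \cite{AGR1}), and your contradiction argument is a faithful formalization of that sketch, including the spectral consequence via Lipschitz stability of Rayleigh quotients.
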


Eventually, the critical case where we assume an upper bound on $v_M\|\B\|_n^n$ will be studied in a forthcoming, but we construct in the present paper some examples of almost extremal hypersurfaces satisfying such a bound as a preliminary. First of all, considering the constructions of Theorem \ref{ctrexple4} in the case $\alpha=n$, we get almost extremal hypersurfaces for the two inequalities with the topology of any immersible hypersurface, with $v_M\|B\|_n^n$ bounded and whose spectrum asymptotically contains ${\rm Sp}(S_M)$ and a finite subset of $\R\setminus{\rm Sp}(S_M)$ (see section \ref{pot2}). Note however that the bound on $v_M\|\B\|_n^n$ will depend on the topology of the extremal hypersurfaces and on the values and number of their eigenvalues not close to ${\rm Sp}(S_M)$.

In section \ref{pot3}, we construct almost extremal hypersurfaces for the Hasanis-Koutroufiotis inequality, not diffeomorphic to $\S_M$, not Gromov-Hausdorff close to $S_M$, with limit spectrum larger than the spectrum of $\S^n$ and with $\|\H\|_\infty$ bounded. We set $E(x)$ the integral part of $x$.

\begin{example}\label{ctrexple1}
For any couple $(l,p)$ of integers there exists a sequence of embedded hypersurfaces $M_j\hookrightarrow \R^{n+1}$ diffeomorphic to $p$ spheres $\S^n$ glued by connected sum along $l$ points, such that $\|\H_j\|_{\infty}^{~}\leqslant C(n)$, $\|\H\|_2=1$, $\|\B_j\|_{n}^{~}\leqslant C(n)$, $\bigl\||X_j|-1\bigr\|_\infty\to0$, $\bigl\||\H_j|-1\bigr\|_1\to 0$, and for any $\sigma\in\N$ we have
$\lambda_\sigma^{M_j}\to\lambda_{E(\frac{\sigma}{p})}^{\S^n}.$
In particular, the $M_j$ have at least $p$ eigenvalues close to $0$ whereas its extrinsic radius is close to $1$.
\end{example}

\begin{example}\label{ctrexple3}
There exists sequence of immersed hypersurfaces $M_j\hookrightarrow\R^{n+1}$ diffeomorphic to $2$ spheres $\S^n$ glued by connected sum along $1$ great subsphere $\S^{n-2}$, such that $\|\H_j\|_{\infty}^{~}\leqslant C(n)$, $\|\H_j\|_2=1$, $\|\B_j\|_2^{~}\leqslant C(n)$, $\bigl\||X_j|-1\bigr\|_\infty\to0$, $\bigl\||\H_j|-1\bigr\|_1\to 0$, and for any $\sigma\in\N$ we have
$\lambda_\sigma^{M_j}\to\lambda_{E(\frac{\sigma}{2})}^{\S^{n,d}},$
where $\S^{n,d}$ is the sphere $\S^n$ endowed with the singular metric, pulled-back of the canonical metric of $\S^n$ by the map $\pi:(y,z,r)\in\S^1\times\S^{n-2}\times[0,\frac{\pi}{2}]\mapsto(y^d,z,r)\in\S^1\times\S^{n-2}\times[0,\frac{\pi}{2}]$, where $\S^1\times\S^{n-2}\times[0,\frac{\pi}{2}]$ is identified with $\S^n\subset \R^{2}\times\R^{n-1}$ via the map $\Phi(y,z,r)=\bigl((\sin r) y,(\cos r) z\bigr)$.
Note that $\S^{n,d}$ has infinitely many eigenvalues that are not eigenvalues of $\S^n$.
\end{example}

\bigskip

The structure of the paper is as follows: after a preliminary section \ref{prel}, where we give short proofs of the Reilly and Hasanis-Koutroufiotis inequalities, we prove some concentration properties for the volume, mean curvature and position vector $X$ of almost extremal hypersurfaces in Section \ref{concentration}. Section \ref{Homog} is devoted to estimates on the restriction on hypersurfaces of the homogeneous, harmonic polynomials of $\R^{n+1}$. These estimates are used in Section \ref{potm} to prove Theorem \ref{maintheo}. We end the paper in section \ref{se} by the constructions of Theorem \ref{ctrexple4} and of Examples \ref{ctrexple1} and \ref{ctrexple3}.

 Throughout the paper we adopt the notation that $C(n,k,p,\cdots)$ is function greater than $1$ which depends on $p$, $q$, $n$, $\cdots$. It eases the exposition to disregard the explicit nature of these functions. The convenience of this notation is that even though $C$ might change from line to line in a calculation it still maintains these basic features.
\bigskip

\noindent\underline{Acknowledgments}:
Part of this work was done while E.A was invited at the MSI, ANU Canberra, funded by the PICS-CNRS Progress in Geometric Analysis and Applications. E.A. thanks P.Delanoe, J.Clutterbuck and J.X. Wang for giving him this opportunity.

\section{Some geometric optimal inequalities}\label{prel}

Any function $F$ on $\R^{n+1}$ gives rise to a function $F\circ X$ on $M$ which, for more convenience, will be also denoted $F$ subsequently. An easy computation gives the formula
\begin{equation}
 \label{fondhess}
 \Delta F=n\H dF(\nu)+\Delta^0F+\nabla^0dF(\nu,\nu),
\end{equation}
where $\nu$ denotes a local normal vector field of $M$ in $\R^{n+1}$, $\nabla^0$ is the Euclidean connection, $\Delta$ denotes the Laplace operator of $(M,g)$ and $\Delta^0$ is  the Laplace operator of $\R^{n+1}$. This formula is fundamental to control the geometry of a hypersurface by its mean curvature. Applied to $F(x)=\langle x,x\rangle$, where $\langle\cdot\, ,\cdot\rangle$ is the canonical product on $\R^{n+1}$, Formula \ref{fondhess} gives the Hsiung formulae,
\begin{align}\label{hsiung}\frac{1}{2}\Delta |X|^2=n\H\scal{\nu}{X}-n,\ \ \ \ \ \ \ \ \int_M\H\langle \nu,X\rangle\vol=v_M\end{align}

\subsection{A rough geometrical bound}\label{rbog}

The integrated Hsiung formula \eqref{hsiung} and the Cauchy-Schwarz inequality give the following
\begin{align}\label{prim2}1=\int_M\frac{ \H\langle \nu,X\rangle\vol}{v_M}\leqslant\|\H\|_2\bigl\|X-\xb\bigr\|_2\end{align}
 This inequality $\|\H\|_2\|X-\overline{X}\|_2\geqslant 1$ is optimal since $M$ satisfies 
$
\|\H\|_2\bigl\|X-\xb\bigr\|_2=1
$
if and only if $M$ is a sphere of radius $\frac{1}{\|\H\|_2}$ and center $\xb$. Indeed, in this case $X-\xb$ and $\nu$ are collinear on $M\setminus\{H=0\}$, hence $|X-\xb|^2$ is locally constant on $M\setminus\{H=0\}$. This implies that $\{H=0\}=\emptyset$ and that $X$ is an isometric-cover of $M$ on the sphere $S$ of center $\overline{X}$ and radius $\|X-\bar{X}\|_2=\frac{1}{\|\H\|_2}$, hence an isometry.

\subsection{Hasanis-Koutroufiotis inequality on extrinsic radius}

We set $R$ the extrinsic Radius of $M$, i.e. the least radius of the balls of $\R^{n+1}$ which contain $M$. Then Inequality \eqref{prim2} gives
$  \|\H\|_2^{~}r_{M}=\|\H\|_2^{~}\inf_{u\in\R^{n+1}}\|X-u\|_\infty\geqslant\|\H\|_2\inf_{u\in\R^{n+1}}\|X-u\|_2=\|\H\|_2\|X-\overline{X}\|_2\geqslant 1$
and $r_{M}=\frac{1}{\|\H\|_2}$ if and only if we have equality in \eqref{prim2}.

\subsection{Reilly inequality on $\mathbf{\lambda_1^M}$}

Since we have $\frac{1}{v_M}\int_M (X_i-\bar{X_i})\,dv=0$ for any component function of $X-\bar{X}$, by the min-max principle and Inequality \eqref{prim2}, we have
$\lambda_1^M\frac{1}{\|\H\|_2^2}\leqslant\lambda_1^M\|X-\bar{X}\|_2^2=\lambda_1^M\sum_i\|X_i-\bar{X_i}\|_2^2\leqslant\sum_i\|\nabla X_i\|_2^2=n$
where $\lambda_1^M$ is the first non-zero eigenvalue of $M$ and where the last equality comes from the fact that $\sum_i|\nabla X_i|^2$ is the trace of the quadratic form $Q(u)=|p(u)|^2$ with respect to the canonical scalar product, where $p$ is the orthogonal projector from $\R^{n+1}$ to $T_xM$. This gives the Reilly inequality \eqref{lambda}.

Here also, equality in the Reilly inequality gives equality in $\ref{prim2}$ and so it characterizes the sphere of radius $\frac{1}{\|\H\|_2}=\|X\|_2=\sqrt{\frac{n}{\lambda_1^M}}$.


\section{Concentration estimates}\label{concentration}

We say that $M$ satisfies the pinching $(P_{p,\varepsilon})$ when $\|\H\|_p\|X-\overline{X}\|_2\leqslant 1+\varepsilon$.
From the proofs of Inequalities \eqref{rext} and \eqref{lambda} above, it appears that pinchings $r_{M}\|\H\|_2\leqslant1+\varepsilon$ or $n\|\H\|_2^2/\lambda_1\leqslant1+\varepsilon$ imply the pinching $(P_{2,\varepsilon})$. 

From now on, we assume, without loss of generality, that $\bar{X}=0$.
Let $X^T(x)$ denote the orthogonal projection of $X(x)$ on the tangent space $T_xM$.

\begin{lemma}\label{banalite} If $(P_{2,\varepsilon})$  holds, then we have $\|X^T\|_2\leqslant\sqrt{3\varepsilon}\|X\|_2$ and $\|X-\frac{\H}{\|\H\|_2^2}\nu\|_2\leqslant\sqrt{3\varepsilon}\|X\|_2$.
\end{lemma}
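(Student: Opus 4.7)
The proof is a direct Cauchy--Schwarz saturation argument, resting on the integrated Hsiung formula $\frac{1}{v_M}\int_M \H\langle\nu,X\rangle\,\vol=1$ from \eqref{hsiung} and the pointwise orthogonal decomposition $|X|^2=\langle X,\nu\rangle^2+|X^T|^2$. First I would apply Cauchy--Schwarz directly to the integrated Hsiung formula to get $\|\H\|_2\,\|\langle X,\nu\rangle\|_2\geq 1$, hence the $L^2$ lower bound $\|\langle X,\nu\rangle\|_2^2\geq 1/\|\H\|_2^2$. Integrating the Pythagorean identity gives $\|X\|_2^2=\|\langle X,\nu\rangle\|_2^2+\|X^T\|_2^2$. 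Combining this with the previous lower bound and with the pinching $\|\H\|_2\|X\|_2\leq 1+\varepsilon$ yields
\[
\|X^T\|_2^2\;\leq\;\|X\|_2^2-\frac{1}{\|\H\|_2^2}\;\leq\;\frac{(1+\varepsilon)^2-1}{\|\H\|_2^2}\;=\;\frac{2\varepsilon+\varepsilon^2}{\|\H\|_2^2}.
\]
The same Cauchy--Schwarz chain forces $1/\|\H\|_2^2\leq\|X\|_2^2$, so this bound rewrites as $\|X^T\|_2^2\leq(2\varepsilon+\varepsilon^2)\|X\|_2^2$, giving the first estimate whenever $\varepsilon\leq 1$; the complementary case $\varepsilon\geq 1$ is trivial from $\|X^T\|_2\leq\|X\|_2\leq\sqrt{3\varepsilon}\|X\|_2$.

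For the second estimate I would expand the squared $L^2$-norm as an inner product,
\[
\Bigl\|X-\frac{\H}{\|\H\|_2^2}\nu\Bigr\|_2^{2}=\|X\|_2^2-\frac{2}{\|\H\|_2^2}\cdot\frac{1}{v_M}\int_M\H\langle X,\nu\rangle\,\vol+\frac{1}{\|\H\|_2^2},
\]
and apply the integrated Hsiung formula once more to collapse the middle term to $-2/\|\H\|_2^2$. The right-hand side reduces to exactly $\|X\|_2^2-1/\|\H\|_2^2$, which was already bounded by $3\varepsilon\|X\|_2^2$ in the first step. Hence the second inequality follows with no further work.

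There is no genuine obstacle here: the whole argument packages the near-saturation of the single chain $1\leq\|\H\|_2\|\langle X,\nu\rangle\|_2\leq\|\H\|_2\|X\|_2\leq 1+\varepsilon$, which simultaneously forces $X$ to be nearly normal to $M$ in $L^2$ (first inequality) and $\H$ to be nearly proportional to $\langle X,\nu\rangle$ (second inequality). The only mild care required is to pass from bounds involving $1/\|\H\|_2^2$ to bounds involving $\|X\|_2^2$, which uses only the lower half of the same chain.
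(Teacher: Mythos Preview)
Your proof is correct and follows essentially the same route as the paper's own proof: both apply Cauchy--Schwarz to the integrated Hsiung identity to get $1\leqslant\|\H\|_2\|\langle X,\nu\rangle\|_2$, combine this with the Pythagorean decomposition $\|X\|_2^2=\|\langle X,\nu\rangle\|_2^2+\|X^T\|_2^2$ and the pinching $(P_{2,\varepsilon})$ for the first estimate, and then expand the squared norm directly (using Hsiung again) to reduce the second estimate to the same quantity $\|X\|_2^2-\|\H\|_2^{-2}$. The only cosmetic difference is that the paper writes the intermediate bound as $\|X\|_2\leqslant(1+\varepsilon)\|\langle X,\nu\rangle\|_2$ before passing to $\|X^T\|_2$, whereas you go via $\|X\|_2^2-1/\|\H\|_2^2$; the algebra is equivalent.
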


\begin{proof}
Since we have $1=\frac{1}{v_M}\int_M\H\langle X,\nu\rangle\vol\leqslant\|\H\|_2\|\langle X,\nu\rangle\|_2$, Inequality $(P_{2,\varepsilon})$ gives us
$\|X\|_2\leqslant(1+\varepsilon)\|\langle X,\nu\rangle\|_2$ and $1\leqslant\|\H\|_2\|X\|_2\leqslant1+\varepsilon$. Hence $\|X-\langle X,\nu\rangle\nu\|_2^{~}\leqslant\sqrt{3\varepsilon}\,\|X\|_2^{~}$ and $\|X-\frac{\H\nu}{\|\H\|_2^2}\|_2^{2}=\|X\|_2^2-\|\H\|_2^{-2}\leqslant3\varepsilon\,\|X\|_2^2$.
\end{proof}

We set $A_\eta=B_0(\frac{1+\eta}{\|\H\|_2})\setminus B_0(\frac{1-\eta}{\|\H\|_2})$.

\begin{lemma}\label{estimplus}
If $(P_{p,\varepsilon})$ (for $p>2$), or $ n\|\H\|_2^2/\lambda_1^M\leqslant1+\varepsilon$, or $r_{M}\|\H\|_2\leqslant1+\varepsilon$ holds (with $\varepsilon\leqslant\frac{1}{100}$), then we have
$\bigl\|\|X\|-\frac{1}{\|\H\|_2}\bigr\|_2\leqslant\frac{C}{\|\H\|_2}\sqrt[8]{\varepsilon}$, $\||\H|-\|\H\|_2\|_2\leqslant C\sqrt[8]{\varepsilon} \|\H\|_2$
and
$\Vol (M\setminus A_{\sqrt[8]{\varepsilon}})\leqslant C\sqrt[8]{\varepsilon}v_M$,
where $C=6\times2^\frac{2p}{p-2}$ in the case ($P_{p,\varepsilon}$) and $C=100$ in the other cases. 
\end{lemma}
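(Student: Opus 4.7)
The plan is to reduce each of the three hypotheses to the pinching $(P_{2,\varepsilon})$ — for $(P_{p,\varepsilon})$ with $p>2$ this follows from $\|\H\|_2\le\|\H\|_p$, and for the other two from the proofs recalled in Section~\ref{prel} — and then to exploit Lemma~\ref{banalite}. By the reverse triangle inequality,
\[
\Bigl\||X|-\tfrac{|\H|}{\|\H\|_2^2}\Bigr\|_2\le\Bigl\|X-\tfrac{\H\nu}{\|\H\|_2^2}\Bigr\|_2\le\sqrt{3\varepsilon}\,\|X\|_2\le\tfrac{C\sqrt{\varepsilon}}{\|\H\|_2},
\]
so it will suffice to show, case by case, that one of $|X|$ or $|\H|/\|\H\|_2^2$ is $L^2$-close to the constant $1/\|\H\|_2$; closeness of the other then follows for free.

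Under the extrinsic radius pinching, $|X|\le r_M\le(1+\varepsilon)/\|\H\|_2$ pointwise while $\|X\|_2\ge1/\|\H\|_2$ by~\eqref{prim2}, so the positive part of $|X|^2-\|X\|_2^2$ is $O(\varepsilon/\|\H\|_2^2)$ in $L^\infty$ and hence (since its integral vanishes) also in $L^1$; interpolating with the obvious $L^\infty$-bound yields $\bigl\||X|^2-\|X\|_2^2\bigr\|_2\le C\sqrt{\varepsilon}/\|\H\|_2^2$. Under the $\lambda_1$ pinching, $\lambda_1\ge n\|\H\|_2^2/(1+\varepsilon)$ and Poincar\'e applied to $u=|X|^2$ — whose gradient $2X^T$ is controlled by Lemma~\ref{banalite} — yields the same conclusion. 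In both cases, combining with $\bigl|\|X\|_2^2-1/\|\H\|_2^2\bigr|\le3\varepsilon/\|\H\|_2^2$ and factoring $a^2-b^2=(a-b)(a+b)$ with $a=|X|$, $b=1/\|\H\|_2$ (so $a+b\ge1/\|\H\|_2$) produces $\bigl\||X|-1/\|\H\|_2\bigr\|_2\le C\sqrt{\varepsilon}/\|\H\|_2$. Under $(P_{p,\varepsilon})$ for $p>2$, the two bounds $\|\H\|_p\|X\|_2\le1+\varepsilon$ and $\|\H\|_2\|X\|_2\ge1$ combine to give $\|\H\|_p\le(1+\varepsilon)\|\H\|_2$, saying that H\"older's inequality $\|\H\|_2\le\|\H\|_p$ is almost saturated; log-convexity of $r\mapsto\log\|\H\|_r$ on $[1,p]$ then forces $\|\H\|_1\ge(1+\varepsilon)^{-p/(p-2)}\|\H\|_2$, and expanding gives $\bigl\||\H|-\|\H\|_2\bigr\|_2^2=2\|\H\|_2\bigl(\|\H\|_2-\|\H\|_1\bigr)\le C_p\,\varepsilon\,\|\H\|_2^2$.

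Thus in all three cases one obtains both $\bigl\||X|-1/\|\H\|_2\bigr\|_2\le C\sqrt{\varepsilon}/\|\H\|_2$ and $\bigl\||\H|-\|\H\|_2\bigr\|_2\le C\sqrt{\varepsilon}\,\|\H\|_2$, a rate slightly stronger than the claimed $\sqrt[8]{\varepsilon}$ one. The volume estimate follows from Markov's inequality applied to $f=\bigl||X|-1/\|\H\|_2\bigr|$: the set $M\setminus A_{\sqrt[8]{\varepsilon}}=\{f>\varepsilon^{1/8}/\|\H\|_2\}$ has volume at most $\|\H\|_2^2\,\|f\|_2^2\,v_M\,\varepsilon^{-1/4}\le C\varepsilon^{3/4}v_M\le C\sqrt[8]{\varepsilon}\,v_M$. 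The delicate point is the $(P_{p,\varepsilon})$ case when $p$ is close to $2$: the H\"older stability of ``$|\H|$ close to $\|\H\|_2$'' degrades as the exponent gap shrinks, and carefully tracking the log-convexity estimate to produce the explicit $p$-dependent constant $C=6\cdot2^{2p/(p-2)}$ of the statement is where the real work lies; the other two cases, based on an $L^\infty$-vs.-Poincar\'e dichotomy, are comparatively routine once the Hsiung-type identities are in hand.
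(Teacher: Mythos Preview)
Your argument is essentially correct, with one small slip and two points where your route genuinely diverges from the paper's.

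The slip: under $r_M\|\H\|_2\le1+\varepsilon$ you assert $|X|\le r_M$ pointwise, but $r_M$ is the radius of the circumball centered at some $X_0$, not at $\bar X=0$. One must first observe that $\|X-X_0\|_2^2=\|X\|_2^2+|X_0|^2\le r_M^2$ together with $\|X\|_2\ge1/\|\H\|_2$ forces $|X_0|\le\sqrt{3\varepsilon}/\|\H\|_2$, whence $|X|\le r_M+|X_0|\le(1+C\sqrt\varepsilon)/\|\H\|_2$. Your $L^\infty$--$L^1$ interpolation then yields $\bigl\||X|^2-\|X\|_2^2\bigr\|_2\le C\varepsilon^{1/4}/\|\H\|_2^2$ rather than $C\sqrt\varepsilon/\|\H\|_2^2$, still comfortably within the claimed $\varepsilon^{1/8}$.

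The differences: in the $(P_{p,\varepsilon})$ case the paper interpolates on $|X|$ rather than on $|\H|$. From Hsiung and H\"older one has $1\le\|\H\|_p\|X\|_{p/(p-1)}$; interpolating $\|X\|_{p/(p-1)}\le\|X\|_1^{1-2/p}\|X\|_2^{2/p}$ and combining with the pinching gives $\|X\|_2\le(1+\varepsilon)^{p/(p-2)}\|X\|_1$, so that $\bigl\||X|-1/\|\H\|_2\bigr\|_2^2=\|X\|_2^2-2\|X\|_1/\|\H\|_2+1/\|\H\|_2^2$ is $O(\varepsilon)$ with the explicit constant $2^{2p/(p-2)}$ falling directly out of the exponent algebra. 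Your dual route --- pushing $\|\H\|_p\le(1+\varepsilon)\|\H\|_2$ through the interpolation $\|\H\|_2\le\|\H\|_1^{(p-2)/(2(p-1))}\|\H\|_p^{p/(2(p-1))}$ --- is equally valid and lands on the same rate; the constant takes a bit more unwinding. In the $r_M$ case the paper also inverts your order: it first bounds $\Vol(M\setminus A_{\sqrt[4]\varepsilon})$ by Chebyshev on $1/\|\H\|_2^2-|X|^2$ and then reads off the $L^2$ estimate on $|X|-1/\|\H\|_2$ from the volume bound, which is why the paper's stated rate is exactly $\varepsilon^{1/8}$ whereas your direct approach (once the slip is fixed) actually does a little better.
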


\begin{proof}
When $(P_{p,\varepsilon})$ holds, we have
$$\|\H\|_p\|X\|_2\leqslant(1+\varepsilon)\leqslant(1+\varepsilon)\|\H\|_p\|X\|_\frac{p}{p-1}\leqslant(1+\varepsilon)\|\H\|_p\|X\|_1^{1-\frac{2}{p}}\|X\|_2^\frac{2}{p},$$
 hence we get $\bigl\||X|-\frac{1}{\|\H\|_2}\bigr\|_2^2=\|X\|_2^2-2\frac{\|X\|_1}{\|\H\|_2}+\frac{1}{\|\H\|_2^2}\leqslant2^\frac{2p}{p-2}\frac{1}{\|\H\|_2^2}\varepsilon$.
Combined with the second inequality of Lemma \ref{banalite}, it gives
\begin{align*}
\bigl\||\H|-\|\H\|_2\bigr\|_2&\leqslant\|\H\|_2^2\bigl\||X|-\frac{|\H|}{\|\H\|_2^2}\bigr\|_2+\|\H\|_2^2\bigl\||X|-\frac{1}{\|\H\|_2}\bigl\|_2\leqslant C\sqrt[4]{\varepsilon}\|\H\|_2
\end{align*}
Now, by the Chebyshev inequality and Lemma \ref{banalite}, we get
\begin{align*} \Vol\bigl(M\setminus A_{\sqrt[4]{\varepsilon} }\bigr)&=\Vol\Bigl\{x\in M/\,\bigl||X(x)|-\frac{1}{\|\H\|_2}\bigr|\geqslant\frac{\sqrt[4]{\varepsilon}}{\|\H\|_2}\Bigr\}\\
&\leqslant\frac{\|\H\|_2^2}{\sqrt{\varepsilon}}\int_M\bigl||X|-\frac{1}{\|\H\|_2}\bigr|^2\leqslant C(p)\sqrt{\varepsilon}v_M
\end{align*}

When $r_{M}\|\H\|_2^{~}\leqslant1+\varepsilon$ holds. We set $X_0$ the center of the circumsphere to $M$ of radius $r_{M}$. We have $\|X-X_0\|_2^2=\|X\|_2^2+|X_0|^2=r_{M}^2\leqslant\frac{(1+\varepsilon)^2}{\|\H\|_2^2}$ and then we have $|X_0|\leqslant\frac{\sqrt{3\varepsilon}}{\|\H\|_2}$ and $|X|\leqslant|X_0|+r_{M}\leqslant\frac{1+3\sqrt{\varepsilon}}{\|\H\|_2}$. So we have $\frac{1}{\|\H\|_2^2}-|X|^2\in[\frac{\sqrt[4]{\varepsilon}}{\|\H\|_2^2},\frac{1}{\|\H\|_2}]$ on $M\setminus A_{\sqrt[4]{\varepsilon}}$. Chebyshev  inequality and \eqref{prim2} give us
\begin{align*}
\frac{\Vol(M\setminus A_{\sqrt[4]{\varepsilon}})}{v_M}\frac{\sqrt[4]{\varepsilon}}{\|\H\|_2^2}&\leqslant\frac{1}{v_M}\int_{M\setminus A_{\sqrt[4]{\varepsilon}}}\frac{1}{\|\H\|_2^2}-|X|^2\leqslant\frac{1}{v_M} \int_{M\cap A_{\sqrt[4]{\varepsilon}}}|X|^2-\frac{1}{\|\H\|_2^2}\leqslant\frac{9\sqrt{\varepsilon}}{\|\H\|_2^2}
\end{align*}
where in the last inequality we have used $|X|\leqslant\frac{1+3\sqrt{\varepsilon}}{\|\H\|_2}$ and, so we get
\begin{align*}
\bigl\||X|-\frac{1}{\|\H\|_2}\bigr\|_2^2&=\frac{1}{v_M}\int_{M\cap A_{\sqrt[4]{\varepsilon}}}\bigl||X|-\frac{1}{\|\H\|_2}\bigr|^2+\frac{1}{v_M}\int_{M\setminus A_{\sqrt[4]{\varepsilon}}}\bigl||X|-\frac{1}{\|\H\|_2}\bigr|^2\\
&\leqslant\frac{\sqrt{\varepsilon}}{\|\H\|^2_2}+\frac{\Vol(M\setminus A_{\sqrt[4]{\varepsilon}})}{v_M}\frac{1}{\|\H\|_2^2}\leqslant \frac{10\sqrt[4]{\varepsilon}}{\|\H\|_2^2}
\end{align*}
Combined with the second inequality of Lemma \ref{banalite}, we get $\|\frac{1}{\|\H\|_2}-\frac{|\H|}{\|\H\|_2^2}\|_2\leqslant\frac{C\sqrt[8]{\varepsilon}}{\|\H\|_2}$.

When $ n\|\H\|_2^2/\lambda_1^M\leqslant1+\varepsilon$ holds, we have $\int_M(|X|^2-\|X\|_2^2)dv=0$ and so by the Poincare inequality we get
$\bigl\||X|^2-\|X\|_2^2\bigr\|_2^2\leqslant\frac{4\|X^T\|_2^2}{\lambda_1^M}\leqslant\frac{12(1+\varepsilon)^2\varepsilon\|X\|_2^2}{n\|\H\|_2^2}\leqslant\frac{200\varepsilon}{n\|\H\|_2^4}$,
which gives $\frac{1}{\|\H\|_2}\bigl\||X|-\frac{1}{\|\H\|_2}\bigr\|_2\leqslant\bigl\||X|^2-\frac{1}{\|\H\|_2^2}\bigr\|_2\leqslant\bigl\||X|^2-\|X\|_2^2\bigr\|_2+\bigl|\|X\|_2^2-\frac{1}{\|\H\|_2^2}\bigr|\leqslant\frac{12\sqrt{\varepsilon}}{\|\H\|_2^2}
$
and then we get the estimate on the volume of $A_{\sqrt[4]{\varepsilon}}$ by the same Chebyshev procedure as for $P_{p,\varepsilon}$ and the estimate on the mean curvature by the same procedure as for $r_{M}\|\H\|_2\leqslant 1+\varepsilon$.
\end{proof}

Let $\psi{:}[0,\infty)\rightarrow [0,1]$ be a smooth function with $\psi{=}0$ outside $[\frac{(1-2\sqrt[16]{\varepsilon})^2}{\|\H\|_2^2},\frac{(1+2\sqrt[16]{\varepsilon})^2}{\|\H\|_2^2}]$ and $\psi{=}1$ on $[\frac{(1-\sqrt[16]{\varepsilon})^2}{\|\H\|_2^2},\frac{(1+\sqrt[16]{\varepsilon})^2}{\|\H\|_2^2}]$. Let us consider the function $\varphi$ on $M$ defined by $\varphi(x)=\psi(|X_x|^2)$ and the vector field $Z$ on $M$ defined by $Z=\nu-\H X$. The previous estimates then imply the following.

\begin{lemma}\label{estinormphiz} $(P_{p,\varepsilon})$ (for $p>2$) or $ n\|\H\|_2^2/\lambda_1\leqslant1+\varepsilon$ or $r_{M}\|\H\|_2\leqslant 1+\varepsilon$ implies $\|\varphi^2(\H^2-\|\H\|_2^2)\|_1\leqslant C\sqrt[8]{\varepsilon}\|\H\|_2^2$, $\|\varphi Z\|_2\leqslant C\varepsilon^\frac{3}{32}$ and $|\|\varphi\|_2^2-1|\leqslant C\sqrt[8]{\varepsilon}$,
 where $C$ is a constant which depends on $p$ in the case  $(P_{p,\varepsilon})$.
\end{lemma}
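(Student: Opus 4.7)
The plan is to prove items (3), (1), (2) in that order, each building on the previous together with the concentration estimates of Lemma~\ref{estimplus}, Lemma~\ref{banalite}, and the Hsiung formulae~\eqref{hsiung}.

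For item~(3), the cutoff $\varphi$ equals $1$ on $A_{\sqrt[16]{\varepsilon}}$ and satisfies $0\leq\varphi\leq 1$, so $\Vol(A_{\sqrt[16]{\varepsilon}})/v_M\leq\|\varphi\|_2^2\leq 1$. Chebyshev's inequality applied to the $L^2$-bound $\bigl\||X|-1/\|\H\|_2\bigr\|_2^2\leq C\sqrt[4]{\varepsilon}/\|\H\|_2^2$ from Lemma~\ref{estimplus} yields $\Vol(M\setminus A_{\sqrt[16]{\varepsilon}})/v_M\leq C\sqrt[8]{\varepsilon}$, which gives~(3). For item~(1), I would factor $\H^2-\|\H\|_2^2=(|\H|-\|\H\|_2)(|\H|+\|\H\|_2)$ and apply Cauchy--Schwarz with the $L^2$-estimate $\||\H|-\|\H\|_2\|_2\leq C\sqrt[8]{\varepsilon}\|\H\|_2$ (Lemma~\ref{estimplus}) and the trivial bound $\||\H|+\|\H\|_2\|_2\leq 2\|\H\|_2$; since $\varphi^2\leq 1$, this produces $\|\varphi^2(\H^2-\|\H\|_2^2)\|_1\leq C\sqrt[8]{\varepsilon}\|\H\|_2^2$.

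For item~(2), I would expand $|Z|^2=1-2\H\langle\nu,X\rangle+\H^2|X|^2$ and integrate against $\varphi^2/v_M$ to get
\begin{equation*}
\|\varphi Z\|_2^2=\|\varphi\|_2^2-\frac{2}{v_M}\int_M\varphi^2\H\langle\nu,X\rangle\vol+\frac{1}{v_M}\int_M\varphi^2\H^2|X|^2\vol.
\end{equation*}
Using the Hsiung formula~\eqref{hsiung}, the second integral rewrites as $v_M-\int_M(1-\varphi^2)\H\langle\nu,X\rangle\vol$. Cauchy--Schwarz and two ingredients---item~(1) combined with item~(3), giving $\int_M(1-\varphi^2)\H^2\vol\leq C\sqrt[8]{\varepsilon}\|\H\|_2^2v_M$, and the comparison of $\int_M|X|^2\vol\leq(1+C\varepsilon)v_M/\|\H\|_2^2$ (from Lemma~\ref{banalite}) with the lower bound $\int_M\varphi^2|X|^2\vol\geq(1-C\sqrt[16]{\varepsilon})v_M/\|\H\|_2^2$ (obtained from the pointwise estimate $|X|\geq(1-\sqrt[16]{\varepsilon})/\|\H\|_2$ on $A_{\sqrt[16]{\varepsilon}}$ together with item~(3)), giving $\int_M(1-\varphi^2)|X|^2\vol\leq C\sqrt[16]{\varepsilon}v_M/\|\H\|_2^2$---yield the critical bound $(1/v_M)\bigl|\int_M(1-\varphi^2)\H\langle\nu,X\rangle\vol\bigr|\leq C\sqrt{\sqrt[8]{\varepsilon}\cdot\sqrt[16]{\varepsilon}}=C\varepsilon^{3/32}$. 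The third integral is handled by the splitting $\varphi^2(\H^2|X|^2-1)=\varphi^2\H^2(|X|^2-1/\|\H\|_2^2)+\varphi^2(\H^2-\|\H\|_2^2)/\|\H\|_2^2$, where I would use the factorisation $|X|^2-1/\|\H\|_2^2=(|X|-1/\|\H\|_2)(|X|+1/\|\H\|_2)$ with the pointwise bound $|X|+1/\|\H\|_2\leq 3/\|\H\|_2$ on $\mathrm{supp}\,\varphi$, together with item~(1). Collecting everything yields $\|\varphi Z\|_2^2\leq C\varepsilon^{3/16}$.

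The main obstacle is the bookkeeping in item~(2), where one must balance the two competing rates at play: the cutoff width $\sqrt[16]{\varepsilon}$ (which controls pointwise deviations of $|X|$ on $\mathrm{supp}\,\varphi$) against the $L^2$-concentration rate $\sqrt[8]{\varepsilon}$ of Lemma~\ref{estimplus} (transmitted through items~(1) and~(3)). The Cauchy--Schwarz estimate on the cross term $\int_M(1-\varphi^2)\H\langle\nu,X\rangle\vol$, where the geometric mean $\sqrt{\sqrt[8]{\varepsilon}\cdot\sqrt[16]{\varepsilon}}=\varepsilon^{3/32}$ arises, is the decisive step that dictates the final exponent.
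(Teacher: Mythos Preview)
Your treatments of items~(1) and~(3) are correct and essentially coincide with the paper's. The gap is in item~(2): your decomposition does not deliver $\|\varphi Z\|_2^2\leqslant C\varepsilon^{3/16}$. After the Hsiung rewriting and your splitting of the third integral, one has
\[
\|\varphi Z\|_2^2 = 2(\|\varphi\|_2^2-1) + \frac{2}{v_M}\!\int_M(1-\varphi^2)\H\langle\nu,X\rangle + \frac{1}{v_M}\!\int_M\varphi^2\H^2\Bigl(|X|^2-\frac{1}{\|\H\|_2^2}\Bigr) + \frac{1}{\|\H\|_2^2 v_M}\!\int_M\varphi^2(\H^2-\|\H\|_2^2).
\]
Your Cauchy--Schwarz bound on the Hsiung cross term is indeed $O(\varepsilon^{3/32})$, but the other three summands are, by your own ingredients, only $O(\sqrt[8]{\varepsilon})$, $O(\sqrt[16]{\varepsilon})$ and $O(\sqrt[8]{\varepsilon})$: for the third summand the only available control on $\varphi^2\H^2$ is via $\|\H\|_2^2$, so you get at best $\sup_{\mathrm{supp}\,\varphi}\bigl||X|^2-1/\|\H\|_2^2\bigr|\cdot\|\H\|_2^2\leqslant C\sqrt[16]{\varepsilon}$. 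You exhibit no cancellation among these three terms, so the conclusion $\|\varphi Z\|_2^2\leqslant C\varepsilon^{3/16}$ does not follow; as written your argument yields only $\|\varphi Z\|_2\leqslant C\varepsilon^{1/32}$. Contrary to your closing paragraph, the cross term is \emph{not} the bottleneck of your scheme.

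The paper bypasses this by a pointwise algebraic regrouping rather than Hsiung: one checks the identity
\[
\varphi^2|Z|^2 = \|\H\|_2^2\,\varphi^2\Bigl|X-\frac{\H}{\|\H\|_2^2}\nu\Bigr|^2 + \frac{1}{\|\H\|_2^2}\,\varphi^2(\|\H\|_2^2-\H^2)\bigl(1-|X|^2\|\H\|_2^2\bigr).
\]
Integrating, the first term is $\leqslant\|\H\|_2^2\|X-\H\nu/\|\H\|_2^2\|_2^2\leqslant C\varepsilon$ by Lemma~\ref{banalite}, while in the second term the pointwise factor $\bigl|1-|X|^2\|\H\|_2^2\bigr|\leqslant 8\sqrt[16]{\varepsilon}$ on $\mathrm{supp}\,\varphi$ is \emph{multiplied} by $\|\varphi^2(\H^2-\|\H\|_2^2)\|_1/\|\H\|_2^2\leqslant C\sqrt[8]{\varepsilon}$ from item~(1), giving $C\varepsilon^{3/16}$. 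The exponent $3/32$ thus comes from the \emph{product} of the two smallness rates; your decomposition separates precisely the two factors that must be kept together.
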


\begin{proof}
We have $\|\varphi^2(\H^2-\|\H\|_2^2)\|_1\leqslant\bigl\||\H|-\|\H\|_2\bigr\|_22\|\H\|_2\leqslant C\sqrt[8]{\varepsilon}\|\H\|^2_2$
and
\begin{align*}
\|\varphi Z\|_2^2=&\frac{1}{v_M}\int_M\varphi^2|Z|^2\vol=\frac{1}{v_M}\int_M\varphi^2(1-2\H\langle\nu,X\rangle+\H^2|X|^2)\vol\\
=&\frac{\|\H\|_2^2}{v_M}\int_M\varphi^2\bigl|X-\frac{\H}{\|\H\|_2^2}\nu\bigr|^2\vol+\frac{1}{\|\H\|_2^2v_M}\int_M(\|\H\|_2^2-\H^2)\varphi^2(1-|X|^2\|\H\|_2^2)\vol\\
\leqslant&\|\H\|_2^2\bigl\|X-\frac{\H}{\|\H\|_2^2}\nu\bigr\|_2^2+8\sqrt[16]{\varepsilon}\frac{\bigl\|\varphi^2(\H^2-\|\H\|_2^2)\bigr\|_1}{\|\H\|_2^2},
\end{align*}
which gives the result by Lemma \ref{banalite}.
Finally, we have $1-\frac{\Vol(M\setminus A_{\sqrt[8]{\varepsilon}})}{v_M}\leqslant\frac{\Vol (A_{\sqrt[8]{\varepsilon}}\cap M)}{v_M}\leqslant\|\varphi\|_2^2$ and $\|\varphi\|_2^2\leqslant 1$.
\end{proof}

\section{Homogeneous, harmonic polynomials of degree $k$}\label{Homog}

In this section, we give some estimates on harmonic homogeneous polynomials restricted to almost extremal hypersurfaces. They will be used subsequently to derive our result on the spectrum and on the volume of almost extremal manifolds. Let us begin by general estimates on harmonic, homogeneous polynomials.

\subsection{General estimates}

Let $\hkr$ be the space of homogeneous, harmonic polynomials of degree $k$ on $\R^{n+1}$. Note that $\hkr$ induces on $\S^n$ the spaces of eigenfunctions of $\Delta^{\S^n}$ associated to the eigenvalues $\mu_k:=k(n+k-1)$ with  multiplicity $m_k:=\begin{pmatrix}n+k-1\\ k\end{pmatrix}\displaystyle\frac{n+2k-1}{n+k-1}$. 

On the space $\hkr$, we set $\scalpoly{P}{Q}:=\frac{1}{\Vol\S^n}\inssn PQ dv_{\can},$
where $dv_{can}$ denotes the element volume of the sphere with its standard metric.

Remind that for any $P\in\hkr$ and any  $Y\in\R^{n+1}$, we have $dP(X)=kP(X)$ and $\nabla^0 dP(X,Y)=(k-1)dP(Y)$.

\begin{lemma}\label{Pcarre}
For any $x\in\R^{n+1}$ and $P\in \hkr$, we have $|P(x)|^2\leqslant\|P\|_{\S^n}^2m_k|x|^{2k}$. 
\end{lemma}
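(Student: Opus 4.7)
The plan is to reduce the estimate to the case $|x|=1$ by homogeneity, then use a reproducing-kernel / orthonormal basis argument on the finite-dimensional space of spherical harmonics of degree $k$.

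First, since $P\in\hkr$ is homogeneous of degree $k$, we have $P(x)=|x|^k P(x/|x|)$ for $x\neq 0$, so $|P(x)|^2=|x|^{2k}|P(\omega)|^2$ with $\omega=x/|x|\in\S^n$. It therefore suffices to prove that
\begin{equation*}
 |P(\omega)|^2\leqslant m_k\,\|P\|_{\S^n}^2\quad\text{for every }\omega\in\S^n.
\end{equation*}

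Next, I would fix an orthonormal basis $(P_1,\dots,P_{m_k})$ of $\hkr$ for the inner product $\scalpoly{\cdot}{\cdot}$, and expand $P=\sum_{i=1}^{m_k}a_i P_i$, so that $\|P\|_{\S^n}^2=\sum_i a_i^2$. By Cauchy--Schwarz,
\begin{equation*}
 |P(\omega)|^2=\Bigl|\sum_i a_iP_i(\omega)\Bigr|^2\leqslant\Bigl(\sum_i a_i^2\Bigr)\Bigl(\sum_i |P_i(\omega)|^2\Bigr)=\|P\|_{\S^n}^2\,K(\omega),
\end{equation*}
where $K(\omega):=\sum_i|P_i(\omega)|^2$.

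The remaining step is to show that $K(\omega)=m_k$ for every $\omega\in\S^n$. Rotational invariance of $\hkr$ and of the $L^2$-inner product on $\S^n$ imply that for any $R\in O(n{+}1)$ the family $(P_i\circ R)$ is again an orthonormal basis of $\hkr$; since $K$ is basis-independent (it is the trace of the evaluation map viewed through any orthonormal basis), we get $K(R\omega)=K(\omega)$, so $K$ is constant on $\S^n$. Integrating this constant against $\frac{dv_{\can}}{\Vol\S^n}$ and using orthonormality yields
\begin{equation*}
 K(\omega)=\frac{1}{\Vol\S^n}\inssn K\,dv_{\can}=\sum_{i=1}^{m_k}\|P_i\|_{\S^n}^2=m_k,
\end{equation*}
and the lemma follows.

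There is no serious obstacle here; this is the standard pointwise bound coming from the reproducing kernel (zonal harmonic) of $\hkr$. The only point that deserves care is the basis-independence of $K$, which is what allows us to promote the $L^2$-average identity $\int K=m_k\Vol\S^n$ to the pointwise identity $K\equiv m_k$.
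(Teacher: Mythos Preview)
Your proof is correct and follows essentially the same approach as the paper: both fix an orthonormal basis of $\hkr$, use the rotational invariance of the inner product to show that $\sum_i P_i(\omega)^2$ is constant on $\S^n$ (the paper phrases this as the trace of the quadratic form $Q_x(P)=P(x)^2$, you phrase it as basis-independence of $K$), integrate to get the value $m_k$, and conclude by Cauchy--Schwarz and homogeneity.
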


\begin{proof}  Let $(P_i)_{1\leqslant i\leqslant m_k}$ be an orthonormal basis of $\hkr$. For any $x\in\S^n$, $Q_x(P)=P^2(x)$ is a quadratic form on $\hkr$ whose trace is given by $\sum_{i=1}^{m_k}P_i^2(x)$. Since for any $x'\in\S^n$ and any $O\in O_{n+1}$ such that $x'=Ox$ we have $Q_{x'}(P)=Q_x(P\circ O)$ and since $P\mapsto P\circ O$ is an isometry of $\hkr$, we have $ \sum_{i=1}^{m_k}P_i^2(x)=\tr(Q_x)=\sum_{i=1}^{m_k}P_i^2(x')=\tr( Q_{x'})$. We infer that
$\sum_{i=1}^{m_k}\frac{1}{\Vol\S^n}\inssn P_i^2(x)\vol=m_k=\frac{1}{\Vol\S^n}\inssn\left(\sum_{i=1}^{m_k}P_i^2(x)\right)\vol$
and so $\sum_{i=1}^{m_k}P_i^2(x)=m_k$. By homogeneity of the $P_i$ we get 
\begin{equation}\label{P1}
\sum_{i=1}^{m_k}P_i^2(x)=m_k|x|^{2k},
\end{equation}
and by the Cauchy-Schwarz inequality applied to $P(x)=\sum_i(P,P_i)_{\S^n}P_i(x)$, we get the result.
\end{proof}

As an immediate consequence, we have the following lemma.

\begin{lemma}\label{grad}
For any $x,u\in\R^{n+1}$ and $P\in\hkr$, we have
$$|d_xP(u)|^2\leqslant\|P\|_{\S^n}^2m_k\Bigl(\frac{\muks}{n}|x|^{2(k-1)}|u|^2+\bigl(k^2-\frac{\muks}{n}\bigr)\langle u,x\rangle^2|x|^{2(k-2)}\Bigr).$$
\end{lemma}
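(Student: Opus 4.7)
The plan is to mimic the orthonormal-basis/symmetry strategy of Lemma \ref{Pcarre}. Fix an orthonormal basis $(P_i)_{1\leqslant i\leqslant m_k}$ of $\hkr$ for $(\cdot,\cdot)_{\S^n}$ and expand $P=\sum_i(P,P_i)_{\S^n}P_i$. The Cauchy-Schwarz inequality then reduces the claim to the identification of the function
\begin{equation*}
G(x,u):=\sum_{i=1}^{m_k}\bigl(d_xP_i(u)\bigr)^2,
\end{equation*}
since $|d_xP(u)|^2\leqslant\|P\|_{\S^n}^2\,G(x,u)$.

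The structure of $G$ can be pinned down by symmetry alone. For any $O\in O_{n+1}$, $(P_i\circ O)_i$ is again an orthonormal basis of $\hkr$, so $G$ is invariant under $(x,u)\mapsto(Ox,Ou)$ and hence depends only on $|x|$, $|u|$ and $\langle x,u\rangle$. Moreover, homogeneity of $P_i$ gives $d_{tx}P_i(u)=t^{k-1}d_xP_i(u)$, so $G$ is homogeneous of bidegree $(2(k-1),2)$ in $(x,u)$. These constraints force
\begin{equation*}
G(x,u)=A\,|x|^{2(k-1)}|u|^2+B\,|x|^{2(k-2)}\langle x,u\rangle^2
\end{equation*}
for two constants $A,B$ depending only on $n,k$.

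To compute $A$ and $B$, I would evaluate $G$ in two configurations on $\S^n$. Setting $u=x$ with $|x|=1$ and applying Euler's identity $d_xP_i(x)=kP_i(x)$, together with \eqref{P1}, gives $G(x,x)=k^2\sum_iP_i^2(x)=k^2m_k$, hence $A+B=k^2m_k$. Next, pick $x\in\S^n$ and an orthonormal basis $(e_j)_{1\leqslant j\leqslant n}$ of $T_x\S^n$; since $\langle e_j,x\rangle=0$ we have $\sum_jG(x,e_j)=nA$, while $\sum_j\bigl(d_xP_i(e_j)\bigr)^2=|\nabla^{\S^n}P_i(x)|^2$. The same orthogonal-invariance argument as in Lemma \ref{Pcarre} shows that $x\mapsto\sum_i|\nabla^{\S^n}P_i|^2(x)$ is constant on $\S^n$, and integrating $\int_{\S^n}|\nabla^{\S^n}P_i|^2=\mu_k\int_{\S^n}P_i^2=\mu_k\Vol\S^n$ (thanks to the normalization of $(\cdot,\cdot)_{\S^n}$) produces $\sum_i|\nabla^{\S^n}P_i|^2\equiv m_k\mu_k$. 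Thus $nA=m_k\mu_k$, yielding $A=m_k\mu_k/n$ and $B=m_k(k^2-\mu_k/n)$; substitution gives the inequality of the lemma.

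No step is really an obstacle: the argument is structural rather than computational. The only conceptual point is to recognise that $G$ must carry both an isotropic term and a radial correction, the latter with coefficient $k^2-\mu_k/n=\frac{(n-1)k(k-1)}{n}$, which correctly vanishes for $k\leqslant 1$ (where $dP$ is either $0$ or constant).
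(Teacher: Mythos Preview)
Your proof is correct and follows essentially the same route as the paper: reduce via Cauchy--Schwarz to computing $G(x,u)=\sum_i(d_xP_i(u))^2$, exploit the diagonal $O_{n+1}$-invariance, and identify the tangential and radial contributions using $\int_{\S^n}|\nabla^{\S^n}P_i|^2=\mu_k$ and Euler's identity. The only cosmetic difference is that the paper decomposes $u=v+\langle u,x\rangle x$ explicitly and kills the cross term by differentiating \eqref{P1}, whereas you pin down the form $G=A|x|^{2(k-1)}|u|^2+B|x|^{2(k-2)}\langle x,u\rangle^2$ a priori from invariant theory and then solve for $A,B$; both arguments are equivalent.
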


\begin{proof} Let $x\in\S^n$ and $u\in\S^n$  so that $\langle u,x\rangle=0$. Once again the quadratic forms $Q_{x,u}(P)=\bigl(d_x P(u)\bigr)^2$ are conjugate (since $O_{n+1}$ acts transitively on orthonormal couples) and so $\displaystyle\sum_{i=1}^{m_k}\bigl(d_xP_i(u)\bigr)^2$ does not depend on $u\in x^{\perp}$ nor on $x\in\S^n$. By choosing an orthonormal basis $(u_j)_{1\leqslant j\leqslant n}$ of $x^\perp$, we obtain that
\begin{align*}\sum_{i=1}^{m_k}\bigl(d_x P_i(u)\bigr)^2&=\frac{1}{n}\sum_{i=1}^{m_k}\sum_{j=1}^n\bigl(d_xP_i(u_j)\bigr)^2=\frac{1}{n\Vol\S^n}\int_{\S^n}\sum_{i=1}^{m_k}|\nabla^{\S^n}P_i|^2\\
&=\frac{1}{n\Vol\S^n}\int_{\S^n}\sum_{i=1}^{m_k}P_i\Delta^{\S^n}P_i=\frac{m_k\muks}{n}
\end{align*}
Now suppose that $u\in\R^{n+1}$. Then $u=v+\langle u,x\rangle x$, where $v=u-\langle u,x\rangle x$, and we have
\begin{align*}\sum_{i=1}^{m_k}\bigl(d_xP_i(u)\bigr)^2&=\sum_{i=1}^{m_k}\bigl(d_xP_i(v)+k\langle u,x\rangle P_i(x)\bigr)^2\\
&=\sum_{i=1}^{m_k}\bigl(d_xP_i(v)\bigr)^2+2k\langle u,x\rangle\sum_{i=1}^{m_k}d_xP_i(v)P_i(x)+m_k\langle u,x\rangle^2 k^2\\
&=\frac{m_k\muks}{n}|v|^2+m_k\langle u,x\rangle^2 k^2=m_k\left(\frac{\muks}{n}|u|^2+\left(k^2-\frac{\muks}{n}\right)\langle u,x\rangle^2 \right),
\end{align*}
where we have taken the derivative the equality \eqref{P1} to compute ${\displaystyle\sum_{i=1}^{m_k}}d_xP_i(v)P_i(x)$. By homogeneity of $P_i$ we get
${\displaystyle\sum_{i=1}^{m_k}}\bigl(d_xP_i(u)\bigr)^2=m_k\bigl(\frac{\muks}{n}|x|^{2(k-1)}|u|^2+(k^2-\frac{\muks}{n})\langle u,x\rangle^2|x|^{2(k-2)}\bigr)
$
and conclude once again by the Cauchy-Schwarz inequality.
\end{proof}

\begin{lemma}\label{Hess} For any $x\in\R^{n+1}$ and $P\in\hkr$, we have 
$$|\nabla^0dP(x)|^2\leqslant\|P\|_{\S^n}^2m_k\alpha_{n,k}|x|^{2(k-2)},$$ where $\alpha_{n,k}=(k-1)(k^2+\mu_k)(n+2k-3)\leqslant C(n)k^4$.
\end{lemma}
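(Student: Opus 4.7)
Following the pattern of Lemmas \ref{Pcarre} and \ref{grad}, I would consider the quadratic form $Q_x(P):=|\nabla^0 dP(x)|^2$ on $\hkr$. Since $P\mapsto P\circ O$ is an $(\cdot,\cdot)_{\S^n}$-isometry of $\hkr$ for every $O\in O_{n+1}$ and conjugates $Q_{Ox}$ to $Q_x$, the trace $\tr Q_x=\sum_i|\nabla^0 dP_i(x)|^2$ (with respect to any orthonormal basis $(P_i)$) is $O_{n+1}$-invariant and therefore constant on $\S^n$; in particular it equals its average $\sum_i\frac{1}{\Vol\S^n}\inssn|\nabla^0 dP_i|^2\vol$. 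Then Cauchy--Schwarz applied to $P=\sum_i(P,P_i)_{\S^n}P_i$ yields $|\nabla^0 dP(x)|^2\leqslant\|P\|_{\S^n}^2\,\tr Q_x$ for $x\in\S^n$, and the homogeneity of $\nabla^0 dP$ of degree $k-2$ extends this inequality to general $x\in\R^{n+1}$ with the factor $|x|^{2(k-2)}$.

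The bulk of the argument is then the computation $\frac{1}{\Vol\S^n}\inssn|\nabla^0 dP|^2\vol=\alpha_{n,k}$ for any $P\in\hkr$ with $\|P\|_{\S^n}=1$. At a point $x\in\S^n$, I would decompose the ambient Hessian in the orthonormal basis $(x,u_1,\ldots,u_n)$ of $\R^{n+1}$. The Euler-type identities $\nabla^0 dP(X,Y)=(k-1)dP(Y)$ directly give $(\nabla^0 dP(x,x))^2=k^2(k-1)^2P^2$ and $\sum_j(\nabla^0 dP(x,u_j))^2=(k-1)^2|\nabla^{\S^n}P|^2$. For the purely tangential block, the Gauss formula (with second fundamental form $\mathrm{II}(u,v)=-\langle u,v\rangle$ on $\S^n$) gives $\mathrm{Hess}^{\S^n}P(u_j,u_l)=\nabla^0 dP(u_j,u_l)-\delta_{jl}kP$, whence
\[
\sum_{j,l}\bigl(\nabla^0 dP(u_j,u_l)\bigr)^2=|\mathrm{Hess}^{\S^n}P|^2-2k\muks P^2+nk^2P^2,
\]
after using $\tr\mathrm{Hess}^{\S^n}P=-\muks P$ (paper's sign convention for $\Delta$). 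Integrating the three blocks on $\S^n$, the standard averages $\frac{1}{\Vol\S^n}\inssn P^2=1$ and $\frac{1}{\Vol\S^n}\inssn|\nabla^{\S^n}P|^2=\muks$, together with Bochner's formula combined with $\mathrm{Ric}_{\S^n}=(n-1)g_{\can}$ which yields $\frac{1}{\Vol\S^n}\inssn|\mathrm{Hess}^{\S^n}P|^2=\muks(\muks-n+1)$, allow summation. Substituting $\muks=k(n+k-1)$ and simplifying gives exactly $k(k-1)(n+2k-1)(n+2k-3)=(k-1)(k^2+\muks)(n+2k-3)=\alpha_{n,k}$; the bound $\alpha_{n,k}\leqslant C(n)k^4$ is then immediate.

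The main obstacle is purely computational: keeping consistent the sign of the second fundamental form of $\S^n$, the geometer's convention $\Delta=-\tr\mathrm{Hess}$ used in the paper, and the Bochner identity through the final polynomial simplification. A sanity check at $k=2$, where one writes $P(x)=\tfrac12\langle Ax,x\rangle$ with $\tr A=0$ and directly computes $|\nabla^0 dP|^2=|A|^2$ together with $\|P\|_{\S^n}^2=\frac{|A|^2}{2(n+1)(n+3)}$, recovers $\alpha_{n,2}=2(n+1)(n+3)$ and helps catch any sign errors.
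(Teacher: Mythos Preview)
Your proof is correct, but the paper's argument is considerably shorter and avoids the delicate bookkeeping you flag as ``the main obstacle.'' Rather than restricting to $\S^n$ and splitting the Hessian into radial and tangential blocks, the paper applies the \emph{Euclidean} Bochner identity pointwise on $\R^{n+1}$: since each $P_i$ is $\Delta^0$-harmonic, one has simply
\[
\sum_{i=1}^{m_k}|\nabla^0 dP_i(x)|^2=-\tfrac12\,\Delta^0\Bigl(\sum_{i=1}^{m_k}|dP_i|^2\Bigr).
\]
The inner sum was already computed in the proof of Lemma~\ref{grad} to be $m_k(k^2+\mu_k)|x|^{2k-2}$, so the whole computation reduces to $\Delta^0|x|^{2k-2}=-2(k-1)(n+2k-3)|x|^{2k-4}$, yielding $m_k\alpha_{n,k}|x|^{2k-4}$ in one line. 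Your route via the spherical Bochner formula, the Gauss equation on $\S^n$, and the block decomposition reaches the same value but forces you to track three sign conventions simultaneously; the Euclidean approach sidesteps all of that because the ambient space is flat and $P$ is ambient-harmonic.
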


\begin{proof} The Bochner equality gives
\begin{align}\label{P3}
\sum_{i=1}^{m_k}|\nabla^0dP_i(x)|^2&=\sum_{i=1}^{m_k}\left(\langle d\Delta^0 P_i,dP_i\rangle-\frac{1}{2}\Delta^0\bigl|dP_i\bigr|^2\right)\notag\\
&=-\frac{1}{2}m_k\bigl(k^2+\mu_k\bigr)\Delta^0|X|^{2k-2}=m_k\alpha_{n,k}|X|^{2k-4}
\end{align}
\end{proof}

\subsection{Estimates on hypersurfaces}

Let $\hkm=\{P\circ X\ ,\ P\in\hkr\}$ be the space of functions induced on $M$ by $\hkr$. We will identify $P$ and $P\circ X$ subsequently. There is no ambiguity since we have

\begin{lemma} Let $M^n$ be a compact manifold immersed by $X$ in $\R^{n+1}$ and let $(P_1,\ldots,P_m)$ be a linearly independent set of homogeneous polynomials of degree $k$ on $\R^{n+1}$. Then the set $(P_1\circ X,\ldots,P_m\circ X)$ is also linearly independent.
\end{lemma}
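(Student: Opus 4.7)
The plan is to prove the contrapositive: if $P=\sum_{i=1}^m a_iP_i$ is a homogeneous polynomial of degree $k$ on $\R^{n+1}$ with $P\circ X\equiv 0$ on $M$, I want to deduce that $P$ is the zero polynomial on $\R^{n+1}$, for then the linear independence of $(P_1,\ldots,P_m)$ forces $a_i=0$ for every $i$. Since a polynomial on $\R^{n+1}$ that vanishes on a non-empty Euclidean open set must vanish identically, it is enough to exhibit such an open set inside the zero locus of $P$.

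First I would exploit the homogeneity of $P$: $P\circ X\equiv 0$ gives $P=0$ on $X(M)$, and then $P(tX(x))=t^kP(X(x))=0$ for all $t\in\R$ and $x\in M$, so $P$ vanishes on the full cone
\[
C=\bigl\{tX(x)\,:\,t\in\R,\ x\in M\bigr\}=\Phi(\R\times M),\qquad \Phi(t,x):=tX(x).
\]
The task then reduces, via the local submersion theorem, to finding one point $(t_0,x_0)\in\R\times M$ at which $d\Phi$ is surjective; then $C$ contains a non-empty Euclidean open set and $P$ must be identically zero.

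To produce such a point I would maximize the continuous function $|X|^2$ on the compact manifold $M$. This maximum is strictly positive, because $X$ being a smooth immersion of an $n$-manifold with $n\geqslant 2$ prevents $X$ from being constant. At a maximizer $x_0\in M$ the critical condition $d(|X|^2)_{x_0}=0$ yields $\langle X(x_0),dX_{x_0}(v)\rangle=0$ for every $v\in T_{x_0}M$, i.e.\ the nonzero vector $X(x_0)$ is orthogonal to the hyperplane $dX_{x_0}(T_{x_0}M)$. Evaluating
\[
d\Phi_{(1,x_0)}(a,v)=aX(x_0)+dX_{x_0}(v),
\]
one sees that its image is $\R X(x_0)\oplus dX_{x_0}(T_{x_0}M)=\R^{n+1}$, so $\Phi$ is a submersion at $(1,x_0)$ and $C$ contains an open neighborhood of $X(x_0)$, on which $P$ vanishes; hence $P\equiv 0$.

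The only delicate point is verifying the transversality of $X(x_0)$ to $dX_{x_0}(T_{x_0}M)$, and this is handled by the standard trick of extremizing the squared distance to the origin on the compact hypersurface; no assumption beyond $X$ being a smooth immersion of a compact connected $n$-manifold with $n\geqslant 2$ is needed. Everything else (homogeneity spreads the vanishing to the cone, the local submersion theorem supplies the open set, and a polynomial on $\R^{n+1}$ vanishing on an open set is identically zero) is completely standard.
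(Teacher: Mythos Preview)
Your argument is correct and follows essentially the same route as the paper: both reduce to showing that a homogeneous polynomial vanishing on $X(M)$ vanishes on the cone over $X(M)$, and then produce a point $x_0$ with $X(x_0)\notin dX_{x_0}(T_{x_0}M)$ so that this cone has non-empty interior. The paper simply asserts that such a point exists by compactness, whereas you supply the standard justification (maximize $|X|^2$) and spell out the inverse-function-theorem step; these are exactly the details one would fill in when reading the paper's proof.
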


\begin{proof} Any homogeneous polynomial $P$ which is zero on $M$ is zero on the cone $\R^+{\cdot} M$. Since $M$ is compact there exists a point $x\in M$ so that $X_x\notin T_xM$ and so $\R^+{\cdot}M$ has non empty interior. Hence $P\circ X=0$ implies $P=0$.
\end{proof}

We now compare the $L^2$-norm of $P$ on $M$ with $L^2$-norm of $P$ on the sphere $S_M=\frac{1}{\|\H\|_2}\S^n$.
We still denote $\psi:[0,\infty)\longrightarrow[0,1]$ a smooth function which is $0$ outside $[\frac{(1-\eta)^2}{\|\H\|_2^2},\frac{(1+\eta)^2}{\|\H\|_2^2}]$, is $1$ on $[\frac{(1-\eta/2)^2}{\|\H\|_2^2},\frac{(1+\eta/2)^2}{\|\H\|_2^2}]$ and satisfies the upper bounds $|\psi'|\leqslant\frac{4\|\H\|_2^2}{\eta}$ and $|\psi''|\leqslant\frac{8\|\H\|_2^4}{\eta^2}$. We set $\varphi(x)=\psi(|X_x|^2)$ on $M$.  

\begin{lemma}\label{estimphi}
With the above restrictions on $\psi$ we have
$$\|\Delta\varphi^2\|_1\leqslant\frac{192\|\H\|_2^4}{\eta^2}\|X^T\|_2^2+\frac{16n\|\H\|_2^2}{\eta}\|\varphi Z\|_1$$
\end{lemma}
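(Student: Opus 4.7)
The plan is to reduce $\Delta\varphi^2$ to a pointwise expression in $|X^T|^2$ and in $\langle Z,\nu\rangle$ via the chain rule and the Hsiung formula, and then to use the uniform bounds on $\psi,\psi',\psi''$ to turn the pointwise inequality into the claimed $L^1$ estimate.

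First I set $\Psi=\psi^2$, so that $\varphi^2=\Psi(|X|^2)$. The standard chain rule for the geometer's Laplacian $\Delta=-\mathrm{tr}\,\mathrm{Hess}$ on $M$ gives, for any smooth $f:M\to\R$,
\begin{equation*}
\Delta(\Psi\circ f)=\Psi'(f)\,\Delta f-\Psi''(f)\,|\nabla f|^2.
\end{equation*}
Apply this to $f=|X|^2$. The integrated and pointwise Hsiung formulae \eqref{hsiung}, rewritten using $Z=\nu-\H X$, give
\begin{equation*}
\tfrac12\Delta|X|^2=n\H\langle\nu,X\rangle-n=-n\langle Z,\nu\rangle,
\end{equation*}
while $\nabla|X|^2=2X^T$ yields $|\nabla|X|^2|^2=4|X^T|^2$. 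Substituting, I obtain the clean identity
\begin{equation*}
\Delta\varphi^2=-2n\,\Psi'(|X|^2)\,\langle Z,\nu\rangle-4\,\Psi''(|X|^2)\,|X^T|^2.
\end{equation*}

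Next I control $\Psi'$ and $\Psi''$ using the assumed bounds on $\psi$. Since $\Psi'=2\psi\psi'$ and $\psi(|X|^2)=\varphi$, one has $|\Psi'(|X|^2)|\leqslant 2\varphi\,|\psi'(|X|^2)|\leqslant 8\varphi\,\|\H\|_2^2/\eta$. For the second derivative, $\Psi''=2(\psi')^2+2\psi\psi''$, so using $0\leqslant\psi\leqslant 1$, $|\psi'|\leqslant 4\|\H\|_2^2/\eta$ and $|\psi''|\leqslant 8\|\H\|_2^4/\eta^2$, one gets $|\Psi''(|X|^2)|\leqslant 2(16)\|\H\|_2^4/\eta^2+2(8)\|\H\|_2^4/\eta^2=48\,\|\H\|_2^4/\eta^2$. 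Plugging these into the identity and using $|\langle Z,\nu\rangle|\leqslant|Z|$ yields the pointwise bound
\begin{equation*}
|\Delta\varphi^2|\leqslant\frac{192\,\|\H\|_2^4}{\eta^2}\,|X^T|^2+\frac{16n\,\|\H\|_2^2}{\eta}\,\varphi|Z|.
\end{equation*}

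Finally, integrating this inequality over $M$ and dividing by $v_M$ gives exactly the desired estimate, by the definition of the $L^1$ and $L^2$ norms used in the paper. There is no real conceptual obstacle: the only thing to be careful about is the sign/normalisation convention for $\Delta$, which must match the one underlying the Hsiung formula \eqref{hsiung}, and the observation that the factor $\varphi$ on the right-hand side comes for free from writing $\Psi'=2\psi\psi'$ and recognising $\psi(|X|^2)=\varphi$. The $|X^T|^2$ term does not acquire a $\varphi$ factor because the $\Psi''$ term is bounded crudely by $2(\psi')^2+2\psi\psi''$, which is why only a pure $\|X^T\|_2^2$ appears in the statement.
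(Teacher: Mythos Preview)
Your proof is correct and follows essentially the same route as the paper: compute $\Delta(\psi^2(|X|^2))$ by the chain rule, rewrite $\Delta|X|^2$ via the Hsiung formula as $-2n\langle Z,\nu\rangle$, and bound $|(\psi^2)'|\leqslant\frac{8\|\H\|_2^2}{\eta}\psi$ and $|(\psi^2)''|\leqslant\frac{48\|\H\|_2^4}{\eta^2}$ to get the pointwise estimate. The only difference is that you spell out the intermediate computations in more detail.
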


\begin{proof}
An easy computation yields that
\begin{align*}\Delta(\varphi^2)&=-(\psi^2)''(|X|^2)|d|X|^2|^2+(\psi^2)'(|X|^2)\Delta|X|^2\\
&=-4(\psi^2)''(|X|^2)|X^T|^2-2n(\psi^2)'(|X|^2)\scal{\nu}{Z}
\end{align*}
But the bound on the derivatives of $\psi$ gives us $|(\psi^2)'|\leqslant\frac{8\|\H\|_2^2}{\eta}\psi$ and $|(\psi^2)''|\leqslant\frac{48\|\H\|_2^4}{\eta^2}$.
Hence we get $\|\Delta\varphi^2\|_1\leqslant\frac{192\|\H\|_2^4}{\eta^2}\|X^T\|_2^2+\frac{16n\|\H\|_2^2}{\eta}\|\varphi Z\|_1$.
\end{proof}

\begin{lemma}\label{Ppresquortho}  Let $\varphi: M\to[0,1]$ be as above. There exists a constant $C=C(n)$ such that for any isometrically immersed hypersurface $M$ of $\R^{n+1}$ and any $P\in\hkm$, we have $\bigl|\|\H\|_2^{2k}\normdfonc{\varphi P}^2-\normdpoly{P}^2\bigr|\leqslant\Bigl(1-\|\varphi\|_2^2+ DC(n)\sum_{i=1}^km_i(1+\eta)^{2k}\Bigr)\normdpoly{P}^2$,
where $D=\|\varphi Z\|_2+\|\varphi Z\|_2^2+\frac{200\|\H\|_2^2}{\eta^2}\|X^\perp\|_2^2+\frac{16n}{\eta}\|\varphi Z\|_1+\frac{\|\varphi^2( \H^2-\|\H\|_2^2)\|_1}{\|\H\|_2^2}$.
\end{lemma}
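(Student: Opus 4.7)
I argue by induction on the degree $k$. The base case $k=0$ is immediate: $P$ is constant, so $\|\H\|_2^{0}\|\varphi P\|_2^2=P^2\|\varphi\|_2^2$ and $\|P\|_{\S^n}^2=P^2$, which gives the stated bound with an empty sum.

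For the inductive step I apply the fundamental formula \eqref{fondhess} to the polynomial $P^2$. Using that $\Delta^0 P^2=-2|\nabla^0 P|^2$ (since $P$ is harmonic), $d(P^2)(\nu)=2P\,dP(\nu)$, and $\nabla^0 d(P^2)(\nu,\nu)=2\,dP(\nu)^2+2P\,\nabla^0 dP(\nu,\nu)$, then substituting $\nu=\H X+Z$ and invoking the Euler relations $dP(X)=kP$ and $\nabla^0 dP(X,\cdot)=(k-1)dP(\cdot)$, a direct expansion yields
\begin{equation*}
\Delta^M P^2=2(\mu_k+k^2)\H^2 P^2-2|\nabla^0 P|^2+2\mathcal{E}_Z,
\end{equation*}
where $\mathcal{E}_Z=(n+4k-2)\H P\,dP(Z)+dP(Z)^2+P\,\nabla^0 dP(Z,Z)$ is supported where $Z$ is nonzero. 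Integrating against $\varphi^2$, using the self-adjointness identity $\int_M\varphi^2\Delta^M P^2=\int_M P^2\Delta^M\varphi^2$, and rearranging, I obtain
\begin{equation*}
\|\H\|_2^{2k}\|\varphi P\|_2^2=\frac{\|\H\|_2^{2(k-1)}}{\mu_k+k^2}\|\varphi|\nabla^0 P|\|_2^2+R_k,
\end{equation*}
where $R_k$ collects three remainders coming from $\int_M P^2\Delta^M\varphi^2$, $\int_M\varphi^2 P^2(\H^2-\|\H\|_2^2)$, and $\int_M\varphi^2\mathcal{E}_Z$.

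Each partial derivative $\partial_i P$ lies in $\mathcal{H}^{k-1}(\R^{n+1})$, so the inductive hypothesis applies to it. Summing and invoking the sphere identity $\sum_i\|\partial_i P\|_{\S^n}^2=\||\nabla^0 P|\|_{\S^n}^2=(\mu_k+k^2)\|P\|_{\S^n}^2$, which is exactly the computation performed in the proof of Lemma \ref{grad}, the first term above reproduces $\|P\|_{\S^n}^2$ up to an error of the form $\bigl(1-\|\varphi\|_2^2+DC(n)\sum_{i=1}^{k-1}m_i(1+\eta)^{2(k-1)}\bigr)\|P\|_{\S^n}^2$. To close the induction I bound $R_k$ by $DC(n)m_k(1+\eta)^{2k}\|P\|_{\S^n}^2$: each integrand in $R_k$ is controlled pointwise on $\mathrm{supp}\,\varphi\subset\{|X|\leqslant(1+\eta)/\|\H\|_2\}$ using the explicit bounds $|P|^2\leqslant m_k\|P\|_{\S^n}^2|X|^{2k}$ of Lemma \ref{Pcarre}, $|dP(Z)|^2\leqslant Ck^2m_k\|P\|_{\S^n}^2|X|^{2(k-1)}|Z|^2$ from Lemma \ref{grad}, and $|\nabla^0 dP|^2\leqslant C(n)k^4 m_k\|P\|_{\S^n}^2|X|^{2(k-2)}$ from Lemma \ref{Hess}; these are then paired with the $L^1$-estimates on $\|\Delta^M\varphi^2\|_1$, $\|\varphi Z\|_1$, $\|\varphi Z\|_2$ and $\|\varphi^2(\H^2-\|\H\|_2^2)\|_1$ provided by Lemmas \ref{estimphi} and \ref{estinormphiz}, each of which contributes one of the five summands in $D$.

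The main obstacle is not conceptual but bookkeeping: one must verify that the powers of $\|\H\|_2$, the factors $k^2/(\mu_k+k^2)$, and the polynomial bounds assemble cleanly so that the new level-$k$ error sits in a uniform $m_k(1+\eta)^{2k}$ envelope, while the passage from level $k-1$ to level $k$ neither duplicates the $(1-\|\varphi\|_2^2)$ term nor loses the telescoping $\sum_{i=1}^{k-1}m_i(1+\eta)^{2(k-1)}+m_k(1+\eta)^{2k}\leqslant\sum_{i=1}^{k}m_i(1+\eta)^{2k}$, which produces the final bound in the statement.
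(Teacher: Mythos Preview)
Your proposal is correct and follows essentially the same route as the paper. The paper also reduces the degree-$k$ estimate to a degree-$(k-1)$ estimate for $\nabla^0 P$ via the identity $\|\nabla^0 P\|_{\S^n}^2=(\mu_k+k^2)\|P\|_{\S^n}^2$, and controls the remainder terms by exactly the pointwise bounds of Lemmas \ref{Pcarre}, \ref{grad}, \ref{Hess} together with Lemma \ref{estimphi}; the only cosmetic differences are that the paper obtains the key identity by writing $\|\varphi\nabla^0 P\|_2^2=\|\varphi dP(\nu)\|_2^2+\|\varphi\nabla^M P\|_2^2$ and integrating by parts, rather than by applying \eqref{fondhess} to $P^2$ (these yield the same relation), and that the paper starts the induction at $k=1$ (using that $|\nabla^0 P|$ is constant) rather than at $k=0$. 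One small remark: Lemma \ref{estinormphiz} is not needed here---it assumes a pinching hypothesis, whereas Lemma \ref{Ppresquortho} is stated for general $M$ with $D$ left as an explicit combination of $\|\varphi Z\|_2$, $\|\varphi Z\|_1$, $\|X^T\|_2$ and $\|\varphi^2(\H^2-\|\H\|_2^2)\|_1$; only Lemma \ref{estimphi} is used, to convert $\|\Delta\varphi^2\|_1$ into those quantities.
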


\begin{proof} For any $P\in\hkm$ we have
\begin{align*}
&\|\varphi\nabla^0 P\|_2^2=\|\varphi dP(\nu)\|_2^2+\|\varphi dP\|_2^2\\
&=\|\varphi dP(Z)\|_2^2+k^2\|\varphi \H P\|_2^2+\frac{1}{v_M}\int_M\bigl(2k \H dP(\varphi Z)\varphi P+\varphi^2 P\Delta P-\frac{P^2\Delta(\varphi^2)}{2}\bigr)\vol
\end{align*}
Now, Formula \eqref{fondhess} applied to $P\in\hkr$ gives
\begin{equation}\label{laplap}\Delta P=\muks \H^2 P+(n+2k-2)\H dP(Z)+\nabla^0 dP(Z,Z)\end{equation}
hence, we get
\begin{align*}\|\varphi\nabla^0 P\|_2^2=&\|dP(\varphi Z)\|_2^2+(\mu_k+k^2)\|\H\varphi P\|_2^2\\
&+\frac{1}{v_M}\int_M\bigl(\varphi^2 P\nabla^0 dP(Z,Z)+(n+4k-2)\varphi \H dP(\varphi Z)P-\frac{P^2\Delta(\varphi^2)}{2}\bigr)\vol\\
=&\frac{1}{v_M}\int_M\Bigl((\mu_k+k^2)\bigl(\H^2-\|\H\|_2^2\bigr)\varphi^2 P^2+(n+4k-2)\H dP(\varphi Z)\varphi P\Bigr)\vol\\
&+\frac{1}{v_M}\int_M \bigr(P\nabla^0 dP(\varphi Z,\varphi Z)-\frac{P^2\Delta(\varphi^2)}{2}\bigr)\vol\\
&+(\muks+k^2)\|\H\|_2^2\|\varphi P\|_2^2+\|dP(\varphi Z)\|_2^2
\end{align*}
Now we have
\begin{align}\label{normgrad0}\normdpoly{\nabla^0 P}^2=\normdpoly{\nabla^{\S^n}P}^2+k^2\normdpoly{ P}^2=(\muks+k^2)\normdpoly{P}^2\end{align}
Hence 
$$\displaylines{
\|\H\|_2^{2k-2}\normdfonc{\varphi\nabla^0 P}^2-\normdpoly{\nabla^0 P}^2=(\muks+k^2)\bigl(\|\H\|_2^{2k}\|\varphi P\|_2^2-\normdpoly{P}^2\bigr)+\|\H\|_2^{2k-2}\normdfonc{dP(\varphi Z)}^2\hfill\cr
+\frac{\|\H\|_2^{2k-2}}{v_M}\int_M\varphi^2 P\Bigl((\mu_k+k^2)\bigl(\H^2-\|\H\|_2^2\bigr)P +\H (n+4k-2)dP(Z)+\nabla^0 dP(Z,Z)\Bigr)\vol\hfill\cr
-\frac{\|\H\|_2^{2k-2}}{v_M}\int_M\frac{P^2\Delta(\varphi^2)}{2}\hfill}$$
Which gives
\begin{align}\label{intermediaire}
\Bigl|\|&\H\|_2^{2k}\|\varphi P\|_2^2-\normdpoly{P}^2\Bigr|\\
\leqslant&\frac{1}{\muks+k^2}\Bigl|\|\H\|_2^{2k-2}\|\varphi\nabla^0 P\|_2^2-\normdpoly{\nabla^0 P}^2\Bigr|\notag\\
&+\frac{\|\H\|_2^{2k-2}}{\mu_k+k^2}\int_M\Bigl( (n+4k-2)|\H|\varphi|P||dP(\varphi Z)|+|dP(\varphi Z)|^2+|P||\nabla^0dP||\varphi Z|^2\Bigr)\notag\\
&+\frac{\|\H\|_2^{2k-2}}{v_M}\int_M\bigl(\varphi^2\bigl|\H^2-\|\H\|_2^{2}\bigr|P^2+\frac{P^2|\Delta(\varphi^2)|}{2}\bigr)\vol\notag
\notag\end{align}
By Lemma \ref{Pcarre}, we have
\begin{align*}
\frac{\|\H\|_2^{2k-2}}{v_M}\insm \bigl| \H^2-\|\H\|_2^{2}\bigr|(\varphi P)^2\vol&\leqslant\frac{m_k\normdpoly{P}^2\|\H\|_2^{2k-2}}{v_M}\insm\bigl|\varphi^2( \H^2-\|\H\|_2^{2})\bigr||X|^{2k}\vol\\
&\leqslant \normdpoly{P}^2m_k (1+\eta)^{2k}\frac{\|\varphi^2(\H^2-\|\H\|_2^2)\|_1}{\|\H\|_2^2}
\end{align*}
In the same way, we have
\begin{align*}
\frac{\|\H\|_2^{2k-2}}{v_M}\int_M\frac{P^2|\Delta(\varphi^2)|}{2}\vol\leqslant\normdpoly{P}^2 m_k(1+\eta)^{2k}\frac{\|\Delta(\varphi^2)\|_1}{\|\H\|_2^2}
\end{align*}
and using Lemma \ref{grad}, we get
\begin{align*}
\frac{\|\H\|_2^{2k-2}}{v_M}\int_M\varphi^2 |PdP(Z)\H|\vol&\leqslant \frac{m_k k\normdpoly{P}^2\|\H\|_2^{2k-2}}{v_M}\int_M\varphi^2|X|^{2k-1}|\H Z|\,dv\\
&\leqslant\normdpoly{P}^2m_kk(1+\eta)^{2k}\|\varphi Z\|_2
\end{align*}
and
\begin{align*}
\frac{\|\H\|_2^{2k-2}}{v_M}\int_M|dP(\varphi Z)|^2&\leqslant\|P\|_{\S^n}^2m_kk^2\frac{\|\H\|_2^{2k-2}}{v_M}\int_M|\varphi Z|^2|X|^{2(k-1)}\,dv\\
&\leqslant \normdpoly{P}^2m_kk^2(1+\eta)^{2k}\|\varphi Z\|_2^2
\end{align*}
Finally, using Lemma \ref{Hess}, we get
\begin{align*}
\frac{\|\H\|_2^{2k-2}}{v_M}\int_M|P||\nabla^0dP||\varphi Z|^2&\leqslant \|P\|_{\S^n}^2m_k\sqrt{\alpha_{n,k}}\frac{\|\H\|_2^{2k-2}}{v_M}\int_M|X|^{2(k-1)}|\varphi Z|^2\,dv\\
&\leqslant\normdpoly{P}^2m_k\sqrt{\alpha_{n,k}}(1+\eta)^{2k}\|\varphi Z\|_2^2
\end{align*}
which, combined with \eqref{intermediaire} and equation \eqref{normgrad0}, gives 
\begin{align*}
&\frac{\bigl|\|\H\|_2^{2k}\normdfonc{\varphi P}^2-\normdpoly{P}^2\bigr|}{\|P\|_{\S^n}^2}\leqslant\frac{\Bigl|\|\H\|_2^{2k-2}\normdfonc{\varphi\nabla^0 P}^2-\normdpoly{\nabla^0 P}^2\Bigr|}{\normdpoly{\nabla^0 P}^2}\\
&+C(n)m_k(1+\eta)^{2k}\Bigl(\|\varphi Z\|_2+\|\varphi Z\|_2^2+\frac{\|\Delta(\varphi^2)\|_1}{\|\H\|_2^2}+\frac{\|\varphi^2(\H^2-\|\H\|_2^2)\|_1}{\|\H\|_2^2}\Bigr)\\
&\leqslant\frac{\Bigl|\|\H\|_2^{2k-2}\normdfonc{\varphi\nabla^0 P}^2-\normdpoly{\nabla^0 P}^2\Bigr|}{\normdpoly{\nabla^0 P}^2}+C(n)m_k(1+\eta)^{2k}D
\end{align*}
In particular for $k=1$, we have $|\nabla^0 P|$ constant equal to $(1+n)\|P\|_{\S^n}^2$ and so
\begin{align*}\bigl|\|\H\|_2^{2}\normdfonc{\varphi P}^2-\normdpoly{P}^2\bigr|&\leqslant\bigl(1-\|\varphi\|_2^2+C(n)m_1(1+\eta)^2D\bigr)\normdpoly{P}^2
\end{align*}
Now, let $B_k=\sup\Bigl\{\frac{|\|\H\|_2^{2k}\normdfonc{\varphi P}^2-\normdpoly{P}^2|}{\normdpoly{P}^2} \ \mid \ P\in\hkr\setminus\{0\}\Bigr\}$. Then using that $\nabla^0P\in{\mathcal H}^{k-1}(\R^{n+1})$ and \eqref{normgrad0}, we get
\begin{align*}B_k\leqslant B_{k-1}+C(n)m_k(1+\eta)^{2k}D\leqslant 1-\|\varphi\|_2^2+C(n)D\sum_{i=1}^km_i(1+\eta)^{2k}
\end{align*}
\end{proof}


\section{Proof of Theorem \ref{maintheo}}\label{potm}

Under the assumption of Theorem \ref{maintheo} we can use Lemmas \ref{banalite} and \ref{estinormphiz} to improve the estimate in Lemma \ref{Ppresquortho} in the case $\eta=2\sqrt[16]{\varepsilon}$.
\begin{lemma}\label{Ppresquortho2}
For any isometrically immersed hypersurface $M\hookrightarrow\R^{n+1}$ with $r_{M}\|\H\|_2\leqslant1+\varepsilon$ (or $\lambda_1(1+\varepsilon)^2\geqslant n\|\H\|_2^2$ or $(P_{p,\varepsilon})$ for $p>2$) and for any $P\in\hkm$, we have
$$\bigl|\|\H\|_2^{2k}\normdfonc{\varphi P}^2-\normdpoly{P}^2\bigr|\leqslant C\sqrt[32]{\varepsilon}\normdpoly{P}^2,$$
where $C=C(n,k)$ in the first two cases and $C=C(p,k,n)$ in the latter case.
\end{lemma}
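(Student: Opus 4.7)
The plan is to apply Lemma \ref{Ppresquortho} with the specific choice $\eta = 2\sqrt[16]{\varepsilon}$ (so that $\varphi$ becomes the cut-off function of Lemma \ref{estinormphiz}), then bound each contribution in the quantity $D$ appearing there using the concentration estimates already established. Under any of the three pinching hypotheses we have, by Lemmas \ref{banalite} and \ref{estinormphiz}, the bounds $\|\varphi Z\|_2 \leqslant C\varepsilon^{3/32}$, $\|X^T\|_2^2 \leqslant 3\varepsilon\|X\|_2^2 \leqslant C\varepsilon/\|\H\|_2^2$, $\|\varphi^2(\H^2-\|\H\|_2^2)\|_1 \leqslant C\sqrt[8]{\varepsilon}\|\H\|_2^2$, and $1-\|\varphi\|_2^2 \leqslant C\sqrt[8]{\varepsilon}$ (with the constant depending on $p$ in the case $(P_{p,\varepsilon})$).

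With $\eta = 2\sqrt[16]{\varepsilon}$, I would estimate the five contributions to $D$ one by one. The term $\|\varphi Z\|_2$ contributes $\varepsilon^{3/32}$ and $\|\varphi Z\|_2^2$ contributes $\varepsilon^{6/32}$. The term $\frac{\|\H\|_2^2}{\eta^2}\|X^T\|_2^2$ behaves like $\varepsilon/\varepsilon^{1/8} = \varepsilon^{7/8}$. The term $\frac{\|\varphi^2(\H^2-\|\H\|_2^2)\|_1}{\|\H\|_2^2}$ contributes $\varepsilon^{1/8}$. Finally, using Cauchy--Schwarz, $\|\varphi Z\|_1 \leqslant \|\varphi Z\|_2 \leqslant C\varepsilon^{3/32}$, so $\frac{16n}{\eta}\|\varphi Z\|_1$ contributes $\varepsilon^{3/32-1/16} = \varepsilon^{1/32}$. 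The dominant (weakest) contribution is therefore $\varepsilon^{1/32}$, which pins down the exponent in the conclusion.

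To finish, I would note that with $\eta = 2\sqrt[16]{\varepsilon}$ small we have $(1+\eta)^{2k} \leqslant 2$ (say, for $\varepsilon$ small enough in terms of $k$), and that $\sum_{i=1}^k m_i \leqslant C(n,k)$. Plugging these bounds into the conclusion of Lemma \ref{Ppresquortho} yields
\[
\bigl|\|\H\|_2^{2k}\normdfonc{\varphi P}^2 - \normdpoly{P}^2\bigr| \leqslant C(n,k)\sqrt[32]{\varepsilon}\,\normdpoly{P}^2,
\]
with $C$ depending on $p$ as well in the case $(P_{p,\varepsilon})$ (inherited from Lemma \ref{estinormphiz}).

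I do not expect any serious obstacle here: this is essentially bookkeeping of exponents, and the choice $\eta = 2\sqrt[16]{\varepsilon}$ is precisely tuned so that the worst of the contributions in $D$ (the $\|\varphi Z\|_1/\eta$ term) yields exactly $\varepsilon^{1/32}$. The only delicate point is the linear-in-$k$ sum $\sum_i m_i (1+\eta)^{2k}$, which forces the final constant to depend on $k$; this dependence is harmless since Theorem \ref{maintheo} restricts $k \leqslant 1/\tau(\varepsilon|n)$.
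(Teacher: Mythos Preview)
Your proposal is correct and follows exactly the approach of the paper, which simply states that one applies Lemma~\ref{Ppresquortho} with $\eta=2\sqrt[16]{\varepsilon}$ and bounds each term of $D$ (and $1-\|\varphi\|_2^2$) using Lemmas~\ref{banalite} and~\ref{estinormphiz}. Your bookkeeping of exponents is accurate, and your identification of $\frac{16n}{\eta}\|\varphi Z\|_1\lesssim\varepsilon^{1/32}$ as the dominant term is precisely what fixes the final $\sqrt[32]{\varepsilon}$.
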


As a consequence, the map $P\mapsto \varphi P$ is injective on $\hkm$ for $\varepsilon$ small enough.
\begin{lemma}\label{dimension} Under the assumption of Lemma \ref{Ppresquortho2}, if $\varepsilon\leqslant\frac{1}{(2C)^{32}}$ then $\dim(\varphi\hkm)=m_k$.
\end{lemma}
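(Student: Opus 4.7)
\medskip

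\noindent\textbf{Proof proposal.} The plan is to show that the linear map $\Phi\colon\hkm\to L^2(M)$ defined by $\Phi(P)=\varphi P$ is injective under the assumption $\varepsilon\leqslant 1/(2C)^{32}$; then the conclusion follows because $\dim\hkm=m_k$. The fact that $\dim\hkm=m_k$ is exactly the unnumbered lemma preceding Lemma~\ref{estimphi} (restriction to $M$ is injective on homogeneous polynomials), combined with $\dim\hkr=m_k$.

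To prove injectivity, suppose $P\in\hkm$ satisfies $\varphi P=0$ as a function on $M$; then in particular $\normdfonc{\varphi P}=0$. Applying Lemma~\ref{Ppresquortho2} to this $P$ gives
\begin{equation*}
\normdpoly{P}^2=\bigl|\|\H\|_2^{2k}\normdfonc{\varphi P}^2-\normdpoly{P}^2\bigr|\leqslant C\sqrt[32]{\varepsilon}\,\normdpoly{P}^2.
\end{equation*}
Since $\varepsilon\leqslant 1/(2C)^{32}$, one has $C\sqrt[32]{\varepsilon}\leqslant\tfrac12<1$, which forces $\normdpoly{P}=0$. Hence $P$ vanishes on $\S^n$, and by homogeneity $P\equiv 0$ on $\R^{n+1}$; the identification $\hkm\cong\hkr$ then gives $P=0$ in $\hkm$. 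Thus $\ker\Phi=\{0\}$, and $\dim(\varphi\hkm)=\dim\hkm=m_k$.

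\medskip

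\noindent\textbf{Main point.} There is no real obstacle: this is a direct quantitative consequence of Lemma~\ref{Ppresquortho2}. The only thing to keep straight is that the estimate bounds the \emph{difference} $\bigl|\|\H\|_2^{2k}\normdfonc{\varphi P}^2-\normdpoly{P}^2\bigr|$ by a small multiple of $\normdpoly{P}^2$ (not of $\normdfonc{\varphi P}^2$), so that setting $\normdfonc{\varphi P}=0$ immediately yields a bound of $\normdpoly{P}^2$ by a strictly smaller fraction of itself. The quantitative threshold $\varepsilon\leqslant 1/(2C)^{32}$ is precisely what is needed to make $C\sqrt[32]{\varepsilon}<1$.
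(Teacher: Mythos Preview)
Your proof is correct and matches the paper's approach: the paper does not give a formal proof of this lemma but simply records it as an immediate consequence of Lemma~\ref{Ppresquortho2}, and your argument spells out precisely that consequence.
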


Lemma \ref{Ppresquortho2} allows us to prove the following estimate on $\Delta P$.
\begin{lemma}\label{almosteigenf}
Under the assumptions of Lemma \ref{Ppresquortho2}, if $\varepsilon\leqslant\frac{1}{(2C)^{32}}$, then for any  $P\in\hkm$, we have $\bigl\|\Delta(\varphi P)-\mu_k^{S_M}\varphi P\bigr\|_2\leqslant C\sqrt[16]{\varepsilon}\mu_k^{S_M}\|\varphi P\|_2$
where $C=C(n,k)$ ($C=C(n,k,p)$ under the pinching $(P_{p,\varepsilon})$).
\end{lemma}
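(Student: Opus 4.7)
My plan is to decompose $\Delta(\varphi P)-\mu_k^{S_M}\varphi P$ into explicit terms and estimate each in $L^2$. Using the product rule $\Delta(\varphi P)=\varphi\Delta P+P\Delta\varphi-2\langle\nabla\varphi,\nabla P\rangle$ together with the key identity \eqref{laplap}, I obtain
\begin{align*}
\Delta(\varphi P)-\mu_k^{S_M}\varphi P =\ & \underbrace{\mu_k\varphi(\H^2-\|\H\|_2^2)P}_{A}+\underbrace{(n+2k-2)\varphi\H dP(Z)}_{B}\\
& +\underbrace{\varphi\nabla^0 dP(Z,Z)}_{C}+\underbrace{P\Delta\varphi}_{D}-\underbrace{2\langle\nabla\varphi,\nabla P\rangle}_{E}.
\end{align*}
Since Lemma \ref{Ppresquortho2} already gives $\|\H\|_2^{2k}\|\varphi P\|_2^2=(1+O(\sqrt[32]{\varepsilon}))\|P\|_{\S^n}^2$, it suffices to bound each of $\|A\|_2,\ldots,\|E\|_2$ by $C(n,k)\sqrt[16]{\varepsilon}\,\mu_k\|P\|_{\S^n}/\|\H\|_2^{k-2}$.

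The crucial pointwise estimate available on $\mathrm{supp}\,\varphi$ is $\varphi|\H|\leq\|\H\|_2(1+|Z|)/(1-\eta)$ with $\eta=2\sqrt[16]{\varepsilon}$, which follows from $|\H||X|=|\nu-Z|\leq 1+|Z|$ together with the lower bound $|X|\geq(1-\eta)/\|\H\|_2$ on $\mathrm{supp}\,\varphi$. This lets me convert pointwise control of $\H$ into $L^2$-control of $Z$. Combined with the polynomial pointwise bounds from Lemmas \ref{Pcarre}, \ref{grad}, \ref{Hess} (which give $\varphi|P|$, $\varphi|dP|$, $\varphi|\nabla^0 dP|$ bounded respectively by constants times $\|P\|_{\S^n}/\|\H\|_2^k$, $\|P\|_{\S^n}/\|\H\|_2^{k-1}$, $\|P\|_{\S^n}/\|\H\|_2^{k-2}$), this reduces the estimates on $B$ and $C$ to integrals like $\int_M\varphi^2\H^2|Z|^2\vol$ and $\int_M\varphi^2|Z|^4\vol$. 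I would control these by iterating the pointwise bound (which gives $\varphi^2|Z|^2\leq C+C\H^2/\|\H\|_2^2$ on $\mathrm{supp}\,\varphi$) together with the estimate $\|\varphi Z\|_2\leq C\varepsilon^{3/32}$ from Lemma \ref{estinormphiz}.

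For the cutoff terms $D$ and $E$, I use that $\nabla\varphi$ and $\Delta\varphi$ are supported on the narrow transition annulus and satisfy $|\nabla\varphi|\leq C\|\H\|_2^2|X^T|/\eta$ and $|\Delta\varphi|\leq C\|\H\|_2^4|X^T|^2/\eta^2+C\|\H\|_2^2|\Delta|X|^2|/\eta$. Using Hsiung's formula to rewrite $\Delta|X|^2=2n(\H\langle Z,X\rangle+\H^2|X|^2-1)$ and combining with the $L^2$-estimates $\|X^T\|_2\leq\sqrt{3\varepsilon}\|X\|_2$ (Lemma \ref{banalite}) and $\|\varphi Z\|_2\leq C\varepsilon^{3/32}$, I bound the $L^2$-norms of $D$ and $E$. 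The $1/\eta$ and $1/\eta^2$ factors are absorbed by the smaller powers of $\varepsilon$ coming from $\|X^T\|_2$ and $\|\varphi Z\|_2$, since $\eta=2\sqrt[16]{\varepsilon}$.

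The main obstacle is term $A$, because Lemma \ref{estinormphiz} provides only the $L^1$-bound $\|\varphi^2(\H^2-\|\H\|_2^2)\|_1\leq C\sqrt[8]{\varepsilon}\|\H\|_2^2$ while we need an $L^2$-type estimate. My approach is to factor $\H^2-\|\H\|_2^2=(|\H|-\|\H\|_2)(|\H|+\|\H\|_2)$, apply the $L^2$-bound $\||\H|-\|\H\|_2\|_2\leq C\sqrt[8]{\varepsilon}\|\H\|_2$ from Lemma \ref{estimplus} to the first factor, and control the second factor using the pointwise inequality $\varphi(|\H|+\|\H\|_2)\leq C\|\H\|_2(1+|Z|)$. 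Combining with the uniform bound $\|\varphi P\|_\infty\leq C\sqrt{m_k}\|P\|_{\S^n}/\|\H\|_2^k$ (Lemma \ref{Pcarre}) and applying Cauchy--Schwarz to $\int_M\varphi^2(\H^2-\|\H\|_2^2)^2P^2\vol$, the estimates couple together with the $\|\varphi Z\|_2$-controls to yield the desired bound, with the exponent $\sqrt[16]{\varepsilon}=\sqrt{\sqrt[8]{\varepsilon}}$ arising naturally from taking the square root of the $L^1$-control after the factorization.
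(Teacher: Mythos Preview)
Your overall strategy---decompose $\Delta(\varphi P)-\mu_k^{S_M}\varphi P$ and bound each piece in $L^2$---matches the paper, and your treatment of the cutoff terms $D,E$ is essentially correct. But the decomposition you chose, via identity \eqref{laplap}, produces terms $A,B,C$ that cannot be controlled with the available estimates.

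The obstruction is that each of $A,B,C$ contains a product of two factors that are only $L^2$-small, and you need the $L^2$ norm of the product. For $B=(n+2k-2)\varphi\H\,dP(Z)$ you are led to $\int_M\varphi^2\H^2|Z|^2$; your pointwise bound $\varphi|\H|\leqslant C\|\H\|_2(1+|Z|)$ converts this into $\int_M\varphi^2|Z|^4$, and your other pointwise bound $\varphi^2|Z|^2\leqslant C(1+\H^2/\|\H\|_2^2)$ sends it right back to $\int_M\varphi^2\H^2|Z|^2$. The ``iteration'' is circular, not contractive, because the constants are of order $1$. The same wall appears for $C$ (which needs $\|\varphi|Z|^2\|_2$) and for $A$ (your factorization together with the pointwise bound on $|\H|+\|\H\|_2$ eventually forces either $\|\,|\H|-\|\H\|_2\,\|_4$ or $\|\varphi Z\|_4$). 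None of the pinching hypotheses give $L^4$ control on $|\H|-\|\H\|_2$ or on $Z$.

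The paper avoids this by a more asymmetric decomposition. Rather than substituting $\nu=Z+\H X$ twice in $\nabla^0dP(\nu,\nu)$, it substitutes once to get $\nabla^0dP(\nu,Z)+(k-1)\H\,dP(\nu)$; and it splits $\H=\tfrac{\H}{|\H|}\|\H\|_2+\tfrac{\H}{|\H|}(|\H|-\|\H\|_2)$ instead of $\H^2=\|\H\|_2^2+(\H^2-\|\H\|_2^2)$. The resulting error terms are
\[
\mu_k\|\H\|_2(|\H|-\|\H\|_2)\varphi P,\quad (n{+}k{-}1)\tfrac{\H}{|\H|}\|\H\|_2\,\varphi\,dP(Z),\quad (n{+}k{-}1)\tfrac{\H}{|\H|}(|\H|-\|\H\|_2)\,\varphi\,dP(\nu),\quad \varphi\,\nabla^0dP(\nu,Z).
\]
Each now carries exactly \emph{one} factor that is $L^2$-small (either $|\H|-\|\H\|_2$ or $Z$), multiplied by quantities that are pointwise bounded on $\mathrm{supp}\,\varphi$ (namely $\nu$, $\H/|\H|$, $\|\H\|_2$, $P$, $dP$, $\nabla^0dP$). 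Hence the $L^2$ bounds of Lemmas \ref{estimplus} and \ref{estinormphiz} apply directly, with no higher-moment issue. Your decomposition can be repaired by the same algebraic trick, but as written the argument has a genuine gap at $A$, $B$, $C$.
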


\begin{proof}
Let $P\in\hkm$. Using \eqref{fondhess} we have
\begin{align*}
\Delta(\varphi P)=&P\Delta\varphi-2\langle dP, d\varphi\rangle+\varphi\Delta P=P\Delta\varphi-2\langle dP,d\varphi\rangle+\varphi n\H dP(\nu)+\varphi\nabla^0 dP(\nu,\nu)\\
=&P\Delta\varphi-2\langle dP, d\varphi\rangle+\varphi \mu_k|H|\|\H\|_2P+\varphi (n+k-1)\frac{\H}{|\H|}\|\H\|_2dP(Z)\\
&+\varphi (n+k-1)\frac{\H}{|\H|}(|\H|-\|\H\|_2)dP(\nu)+\varphi\nabla^0 dP(\nu,Z)
\end{align*}
hence, we get
\begin{align}\label{esti1}
&\normdfonc{\Delta(\varphi P)-\mu_k\normdfonc{H}^2\varphi P}\leqslant\normdfonc{(\Delta\varphi) P}+2\normdfonc{\scal{d\varphi}{dP}}+\mu_k\normdfonc{(|H|-\normdfonc{H})\varphi P}\|\H\|_2\notag\\
&+(n{+}k{-}1)\|\H\|_2\normdfonc{\varphi|dP||Z|}+ (n{+}k{-}1)\bigl\|\varphi(|\H|-\|\H\|_2)dP(\nu)\bigr\|_2+\normdfonc{\varphi|\nabla^0 dP||Z|}
\end{align}
Let us estimate $\normdfonc{(\Delta\varphi) P}$.
\begin{align*}\normdfonc{(\Delta\varphi) P}^2&\leqslant\frac{1}{v_M}\insm(4|\psi''(|X|^2)||X^T|^2+2n|\psi'(|X|^2)||Z|)^2 P^2\vol\\
&\leqslant\frac{m_k}{v_M}\Bigl(\insm|X|^{2k}\bigl(4|\psi''(|X|^2)||X^T|^2+2n|\psi'(|X|^2)||Z|\bigr)^2\vol\Bigr)\normdpoly{P}^2\\
&\leqslant\frac{m_k}{v_M}\frac{(1+2\sqrt[16]{\varepsilon})^{2k}}{\normdfonc{H}^{2k}}\Bigl(\int_{A_{2\sqrt[16]{\varepsilon}}}\bigl(\frac{8\normdfonc{H}^4}{\sqrt[8]{\varepsilon}}|X^T|^2+2n\frac{2\normdfonc{H}^2}{\sqrt[16]{\varepsilon}}|Z|\bigr)^2\vol\Bigr)\normdpoly{P}^2\\ 
&\leqslant\frac{m_k}{v_M}\frac{(1+2\sqrt[16]{\varepsilon})^{2k}}{\normdfonc{H}^{2k}}\Bigl(\int_{A_{2\sqrt[16]{\varepsilon}}}\frac{128\normdfonc{H}^8}{\sqrt[4]{\varepsilon}}|X^T|^4+32n^2\frac{\normdfonc{H}^4}{\sqrt[8]{\varepsilon}}|Z|^2\vol\Bigr)\normdpoly{P}^2 
\end{align*}
Since we have $|X^T|\leqslant|X|$ and since Lemma \ref{estinormphiz} is valid with $\|\varphi Z\|_2^2$ replaced by $\frac{1}{v_M}\int_{A_{2\sqrt[16]{\varepsilon}}}|Z|^2$, we get
\begin{align*}\normdfonc{(\Delta\varphi) P}^2
&\leqslant\frac{C(n,k)\mu_k}{v_M}\frac{\normdpoly{P}^2}{\normdfonc{H}^{2k}}\int_{A_{2\sqrt[16]{\varepsilon}}}\bigl(\frac{\normdfonc{H}^6}{\sqrt[4]{\varepsilon}}|X^T|^2+\frac{\normdfonc{H}^4}{\sqrt[8]{\varepsilon}}|Z|^2\bigr)\vol\\
&\leqslant\frac{C(n,k)\mu_k}{\normdfonc{H}^{2k}}\normdfonc{H}^4\sqrt[16]{\varepsilon}\normdpoly{P}^2
\end{align*}

From the lemma \ref{Ppresquortho2}, $\varepsilon\leqslant \frac{1}{(2C)^{32}}$ implies that
\begin{align}\label{estiimportante}\normdpoly{P}^2\leqslant 2\normdfonc{H}^{2k}\normdfonc{\varphi P}^2\end{align}
 which gives
\begin{align}\label{esti2}\normdfonc{(\Delta\varphi) P}^2\leqslant C(n,k)\mu_k \normdfonc{H}^4\sqrt[16]{\varepsilon}\normdfonc{\varphi P}^2
\end{align}
Now
\begin{align}\label{esti3}
\normdfonc{\scal{d\varphi}{dP}}^2&\leqslant4\normdfonc{\psi'(|X|^2)|X^T||dP|}^2\leqslant\frac{16\normdfonc{\H}^4}{\sqrt[16]{\varepsilon} v_M}\int_{A_{2\sqrt[16]{\varepsilon}}}|X^T|^2|dP|^2\vol\notag\\
&\leqslant\frac{16\normdfonc{\H}^4}{\sqrt[16]{\varepsilon} v_M}\normdpoly{P}^2\int_{A_{2\sqrt[16]{\varepsilon}}}|X^T|^2m_knk^2|X|^{2(k-1)}\vol\notag\\
&\leqslant C(n,k)\mu_k\sqrt[16]{\varepsilon}\normdfonc{\H}^{4-2k}\normdpoly{P}^2\leqslant C(n,k)\normdfonc{\H}^4\sqrt[16]{\varepsilon}\normdfonc{\varphi P}^2
\end{align}
By the same way, we get
\begin{align}\label{esti7}
\|\varphi|dP|Z\|_2^2\leqslant C(n,k)\mu_k\|\H\|_2^{2}\sqrt[16]{\varepsilon}\|\varphi P\|^2_2
\end{align}
Now, by Lemma \ref{estimplus}, we have
\begin{align}\label{esti4}\normdfonc{(|\H|-\normdfonc{\H})\varphi P}^2&\leqslant\frac{m_k}{v_M}\normdpoly{P}^2\insm||H|-\normdfonc{H}|^2|X|^{2k}\varphi^2\vol\notag\\
&\leqslant\frac{C(n,k)}{\normdfonc{H}^{2k}}\normdpoly{P}^2\normdfonc{\varphi(|H|-\normdfonc{H})}^2\notag\\
&\leqslant C(n,k)\mu_k\normdfonc{H}^2\sqrt[16]{\varepsilon}\normdfonc{\varphi P}^2
\end{align}
By the same way, we get
\begin{align}\label{esti8}
\|\varphi(|\H|-\|\H\|_2)dP(\nu)\|_2^2\leqslant C(n,k)\mu_k\sqrt[16]{\varepsilon}\|\H\|_2^4\|\varphi P\|_2^2
\end{align}
Now let us estimate the last terms of \eqref{esti1}
\begin{align}\label{esti5}\normdfonc{\varphi|\nabla^0 dP||Z|}^2&\leqslant \frac{C(n,k)\mu_k}{v_M}\normdpoly{P}^2\int_M\varphi^2|X|^{2k-4}|Z|^2\vol\notag\\
&\leqslant C(n,k)\mu_k\normdfonc{H}^{4}\sqrt[16]{\varepsilon}\|\varphi P\|_2^2
\end{align}

Reporting  \eqref{esti2}, \eqref{esti3}, \eqref{esti7}, \eqref{esti4}, \eqref{esti8} and \eqref{esti5} in \eqref{esti1} we get
$$\normdfonc{\Delta(\varphi P)-\mu_k\normdfonc{H}^2\varphi P}\leqslant C(n,k)\sqrt[16]{\varepsilon} \mu_k\normdfonc{H}^2\normdfonc{\varphi P}$$
\end{proof}

Let $E_k^\varepsilon$ be the space spanned by the eigenfunctions of $M$ associated to an eigenvalue in the interval $\bigl[(1-\sqrt[16]{\varepsilon}2C(n,k))\mu_k^{S_M},(1+\sqrt[16]{\varepsilon}2C(k,n))\mu_k^{S_M}\bigr]$. If $\dim E_k^\varepsilon<m_k$, then there exists $\varphi P\in(\varphi \hkm)\setminus\{0\}$ which is $L^2$-orthogonal to $E_k^{\nu}$. Let $\displaystyle\varphi P=\sum_i f_i$ be the decomposition of $\varphi P$ in the Hilbert basis given by the eigenfunctions $f_i$ of $M$ associated respectively to $\lambda_i$. Putting $N:=\{i/\, f_i\notin E_k^{\varepsilon}\}$, by assumption on $P$ we have 
$$\displaylines{4C(n,k)^2\sqrt[8]{\varepsilon}(\mu_k^{S_M})^2\|\varphi P\|_2^2\leqslant\sum_{i\in N}\bigl(\lambda_i-\mu_k^{S_M}\bigr)^2\|f_i\|_2^2=\|\Delta (\varphi P)-\mu_k^{S_M}\varphi P\|_2^2\hfill\cr
\leqslant (\mu_k^{S_M})^2C(n,k)^2\sqrt[8]{\varepsilon}\|\varphi P\|_2^2}$$
which gives a contradiction. We then have $\dim E_k^\varepsilon\geqslant m_k$.


\section{Some examples}\label{se}

\subsection{Proof of Theorem \ref{ctrexple4}}\label{pot2}

We adapt the constructions made in \cite{Ann,Tak1,AG1}. We consider submanifolds obtained by connected sum of a small submanifold $\varepsilon M_2$ with a fixed submanifold $M_1$ along a small, adequately pinched cylinder $\varepsilon T'_\varepsilon$. This is a 2 scales collapsing sequence of submanifolds. Gluing several such cylinders (with $M_2$ replaced by $\S^m$ for the supplementary cylinders) adds any finite set of eigenvalues to the spectrum of $M_1$. Since $F\setminus{\rm Sp}(M_1)$ is the Hausdorff limit of a sequence of finite sets, this will give Theorem \ref{ctrexple4}. We first describe precisely the construction in the case of one gluing, taking a special care of the case $\alpha=n$.

\subsubsection{Flattening of submanifolds}
For any submanifold $M$ of $\R^{n+1}$, we set $\tilde{M}^\varepsilon$ the submanifold of $\R^{n+1}$ obtained by flattening $M$ at the neighbourhood of a point $x_0\in M$ along the following procedure: 

$M$ is locally equal to $\{x_0+w+f(w),\,w\in B_0(10\varepsilon_0)\subset T_{x_0}M\}$ where $f:B_0(10\varepsilon_0)\subset T_{x_0}M\to N_{x_0}M$ is a smooth function and $N_{x_0}M$ is the normal bundle $M$ at $x_0$. 
Let $\varphi:\R_+\to[0,1]$ be a smooth function such that $\varphi=0$ on $[0,4\varepsilon_0]$ and $\varphi=1$ on $[5\varepsilon_0,+\infty)$. 
We set $\tilde{M}^\varepsilon$ the submanifold obtained by replacing the subset $\{x_0+w+f(w),\,w\in B_0(10\varepsilon_0)\subset T_{x_0}M\}$ by $\{x_0+w+f_\varepsilon(w),\,w\in B_0(10\varepsilon_0)\subset T_{x_0}M\}$, with $f_\varepsilon(w)=f\bigl(\varphi(\frac{\varepsilon_0\|w\|}{\varepsilon})w\bigr)$ for any $\varepsilon\leqslant2\varepsilon_0$, and $M^\varepsilon=\tilde{M}^\varepsilon\setminus B_{x_0}(3\varepsilon)$. 
Note that $\tilde{M}^\varepsilon$ is a smooth deformation of $M$ in a neighbourhood of $x_0$ and that the boundary of $M^\varepsilon$ has a neighbourhood isometric to the flat annulus $B_0(4\varepsilon)\setminus B_0(3\varepsilon)$ in $\R^m$. 
Note also for what follows that for $\varepsilon$ small enough, $M^\varepsilon\setminus B_{x_0}(10\varepsilon)$ is a subset of $M$. 
As a graph, the curvatures of $\tilde{M}^\varepsilon$ at the neighbourhood of $x_0$ are given by the formulae
\begin{align*}
|\B_\varepsilon|^2&=\sum_{i,j,k,l=1}^m\sum_{p,q=m+1}^{n+1}Ddf_p(e_i,e_k)Ddf_q(e_j,e_l)H^{i,j}H^{k,l}G^{p,q}\\
\H_\varepsilon&=\frac{1}{m}\sum_{k,l=m+1}^{n+1}\sum_{i,j=1}^mDdf_k(e_i,e_j)H^{i,j}G^{k,l}(\nabla f_l-e_l)
\end{align*}
where $(e_1,\cdots,e_m)$ is an ONB of $T_{x_0}M_1$, $(e_{m+1},\cdots,e_{n+1})$ an ONB of $N_{x_0}M_1$, $f_\varepsilon(w)=\sum_{i=m+1}^{n+1}f_i(w)e_i$, $G_{kl}=\delta_{kl}+\langle\nabla f_k,\nabla f_l\rangle$ and $H_{kl}=\delta_{kl}+\langle df_\varepsilon(e_k),df_\varepsilon(e_l)\rangle$.
Now $f_\varepsilon$ converges in $\mathcal{C}^\infty$ norm to $f$ on any compact subset of $B_0(\varepsilon_0)\setminus\{0\}$, while $|df_\varepsilon|$ and $|Ddf_\varepsilon|$ remain uniformly bounded on $B_0(\varepsilon_0)$ when $\varepsilon$ tends to $0$. 
By the Lebesgue convergence theorem, we have
\begin{align*}
\lim_{\varepsilon\to 0}\int_{\tilde{M}^\varepsilon}|\H_\varepsilon|^{\alpha}dv=\lim_{\varepsilon\to 0}\int_{M^\varepsilon}|\H_\varepsilon|^{\alpha}dv= \int_{M}|\H|^{\alpha}dv\\
\lim_{\varepsilon\to 0}\int_{\tilde{M}^\varepsilon}|\B_\varepsilon|^{\alpha}dv=\lim_{\varepsilon\to 0}\int_{M^\varepsilon}|\B_\varepsilon|^{\alpha}dv= \int_{M}|\B|^{\alpha}dv
\end{align*}
for any $\alpha\geqslant 1$.
By the same way, any function on $M$ can be seen as a function on $\tilde{M}^\varepsilon$ and this  identification of $H^1(M)$ with $H^1(\tilde{M}^\varepsilon)$ tends to an isometry as $\varepsilon$ tends to $0$.

\subsubsection{Control of the curvature of the gluing}
Let $M_1$, $M_2$ be 2 manifolds of dimension $m$ isometrically immersed in $\R^{n+1}$ and $\lambda,L$ be some fixed, positive real numbers, with $\lambda\notin{\rm Sp}(M_1)$ and $L>\max\bigl(\frac{C(M_1)(1+\lambda)^2}{d^2},1\bigr)$, where $d$ is the distance between $\lambda$ and ${\rm Sp}(M_1)$ in $\R$. We consider the flattenings $\tilde{M}_2^\varepsilon$ of $M_2$ around the point $x_2$ and $M_1^\varepsilon$ of $M_1$ around $x_1$. Let $D$ be a smooth hypersurface of revolution of $\R^{m+1}$, composed of three parts, $D_1$, $D_2$, $D_3$, where $D_1$ is a cylinder of revolution isometric to $B_0(3)\setminus B_0(2)\subset\R^{m+1}$ at the neighbourhood of one of its boundary component and isometric to $[0,1]\times\S^{m-1}$ at the neighbourhood of its other boundary component, where $D_2=[0,L]\times\S^{m-1}$ and where $D_3$ is a disc of revolution with pole $x_3$ and isometric to $[0,1]\times\S^{m-1}$ at its boundary and to a flat disc at the neighbourhood of $x_3$. Let $C$ be a cylinder of revolution of dimension $m$ isometric to $B_0(2)\setminus B_0(1)\subset\R^m$ at the neighbourhood of its 2 boundary components. 
\begin{center}
\includegraphics[width=4cm]{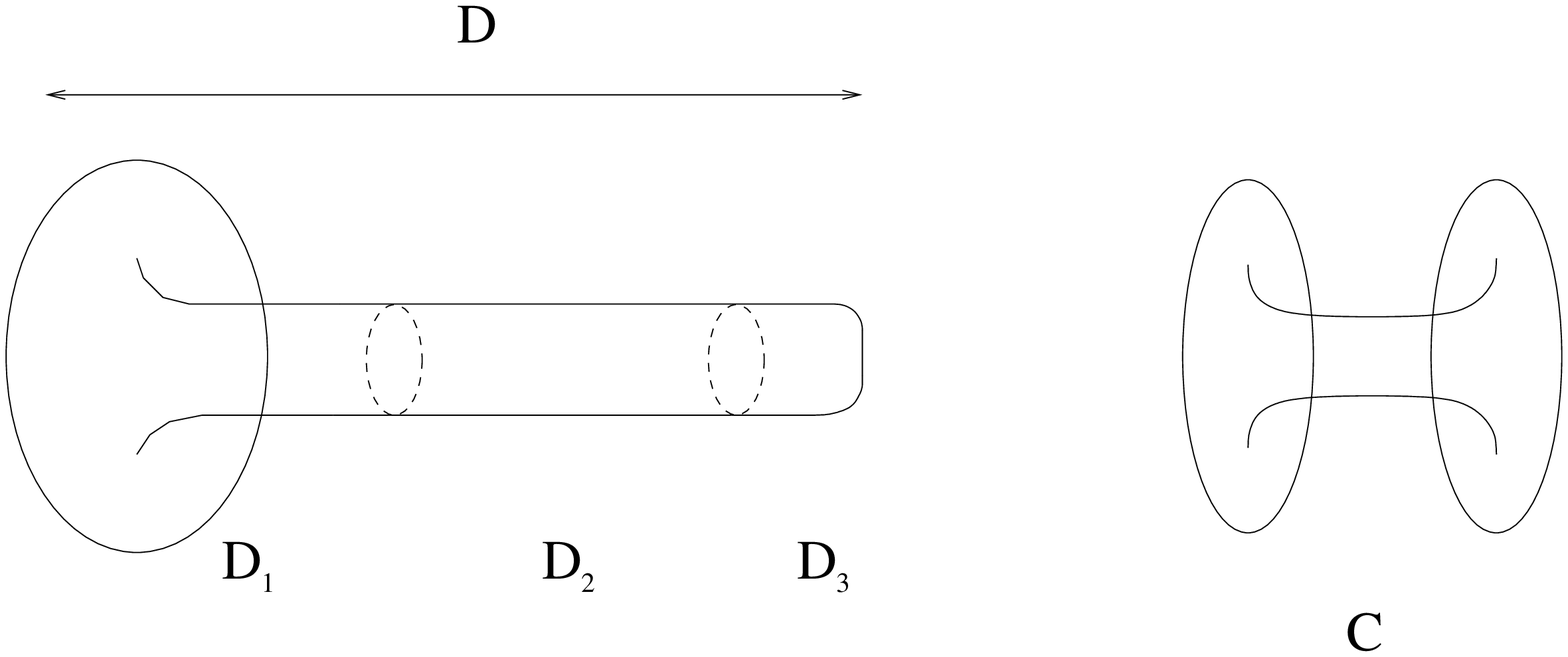}
\end{center}
There exists $\nu_0>0$ such that for any $\nu\in]0,\nu_0[$ the gluing of $\tilde{M}^\varepsilon_2\setminus B_{x_2}(2\nu)$, of $\nu C$ and of $D\setminus B_{x_3}(2\nu)$ along their isometric boundary components is a smoothly immersed submanifold $T'_\nu$ of dimension $m$.
 By standard arguments (see  for instance \cite{Ann} or what is done in section \ref{computspec} in a more complicate case), when $\nu$ tends to $0$, the Dirichlet spectrum of $T'_\nu$ converges to the disjoint union of the Dirichlet spectrum of $D$ and of the spectrum of $M_2$. Moreover, for $\nu$ small enough, $\lambda_1^D(T'_\nu)$ depends continuously on $\nu$. We infer that for any $\varepsilon\in]0,\varepsilon_0(M_2,\lambda,L,D_1,D_3)[$ there exists a $\nu_\varepsilon\in]0,\nu_0(M_2,\lambda,L,D_1,D_3)[$ such that $\lambda_1^D(T'_{\nu_\varepsilon})=\varepsilon^2\lambda$ and $\lambda_2^D(T'_{\nu_\varepsilon})\geqslant\Lambda_2(L,M_2,\lambda,D_1,D_3)>0$. We set $T_\varepsilon=\varepsilon T'_{\nu_\varepsilon}$. Note that we have $\int_{T_\varepsilon}|\B|^p\leqslant \varepsilon^{m-p}C_2(M_2,\lambda,L,D_1,D_3)$ for any $p<m$, $\lim_{\varepsilon\to 0}\int_{T_\varepsilon}|\B|^m=\int_{M_2}|\B|^m+\int_{D_1}|\B|^m+\int_{D_3}|\B|^m+LC(m)$, $\lambda^D_1(T_\varepsilon)=\lambda$ and $\lambda_2^D(T_\varepsilon)\geqslant\frac{\Lambda_2}{\varepsilon^2}$ for any $\varepsilon\leqslant\varepsilon_0$.
\begin{center}
\includegraphics[width=4cm]{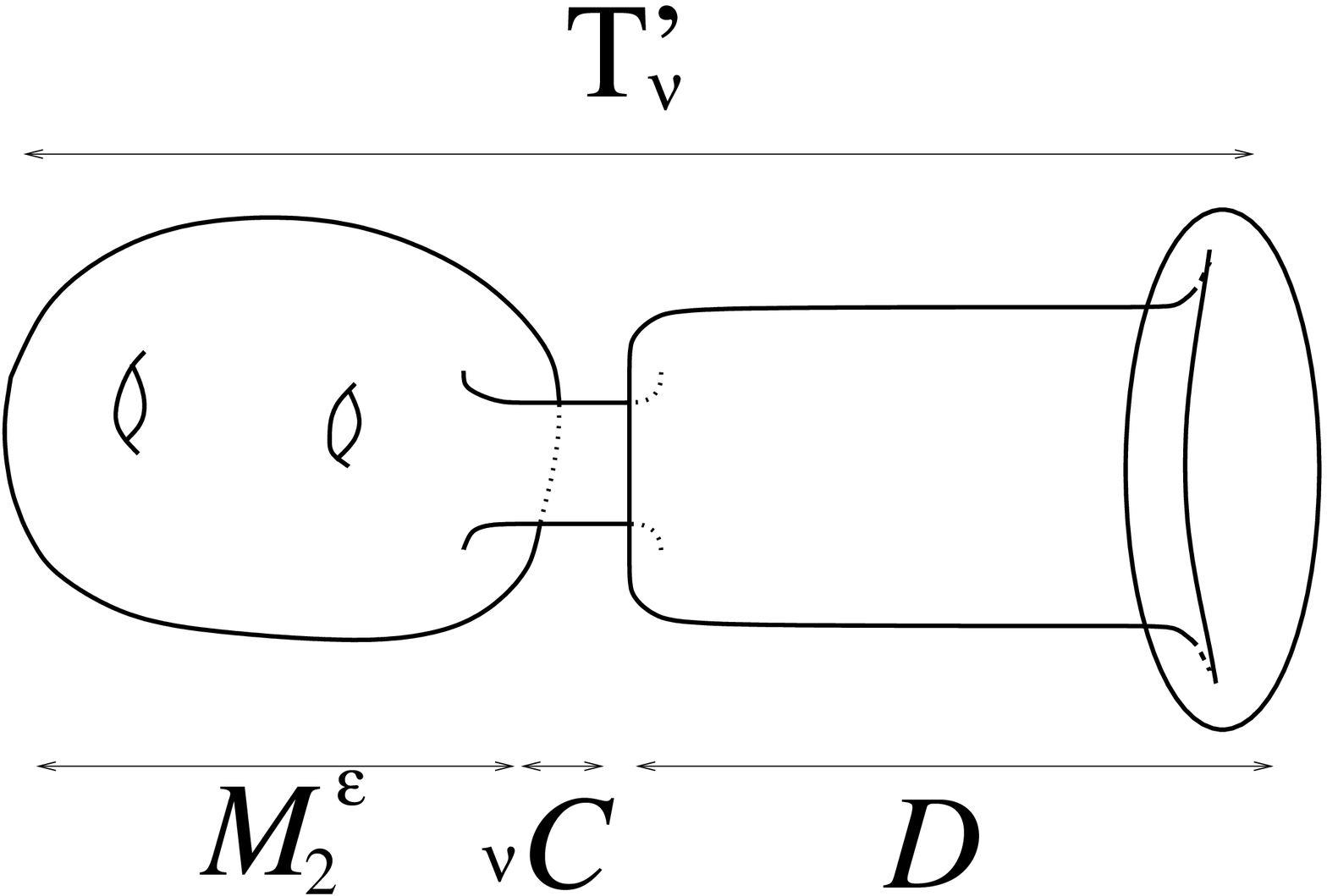}
\end{center}
We set $M_\varepsilon$ the $m$-submanifold of $\R^{n+1}$ obtained by gluing $M_1^\varepsilon$ and $T_{\varepsilon}$ along their boundaries in a fixed direction $\nu\in N_{x_1}M_1$. Note that $M_\varepsilon$ is a smooth immersion of $M_1\#M_2$  (resp. an embedding when $M_1$ and $M_2$ are embedded).
\begin{center}
\includegraphics[width=4cm]{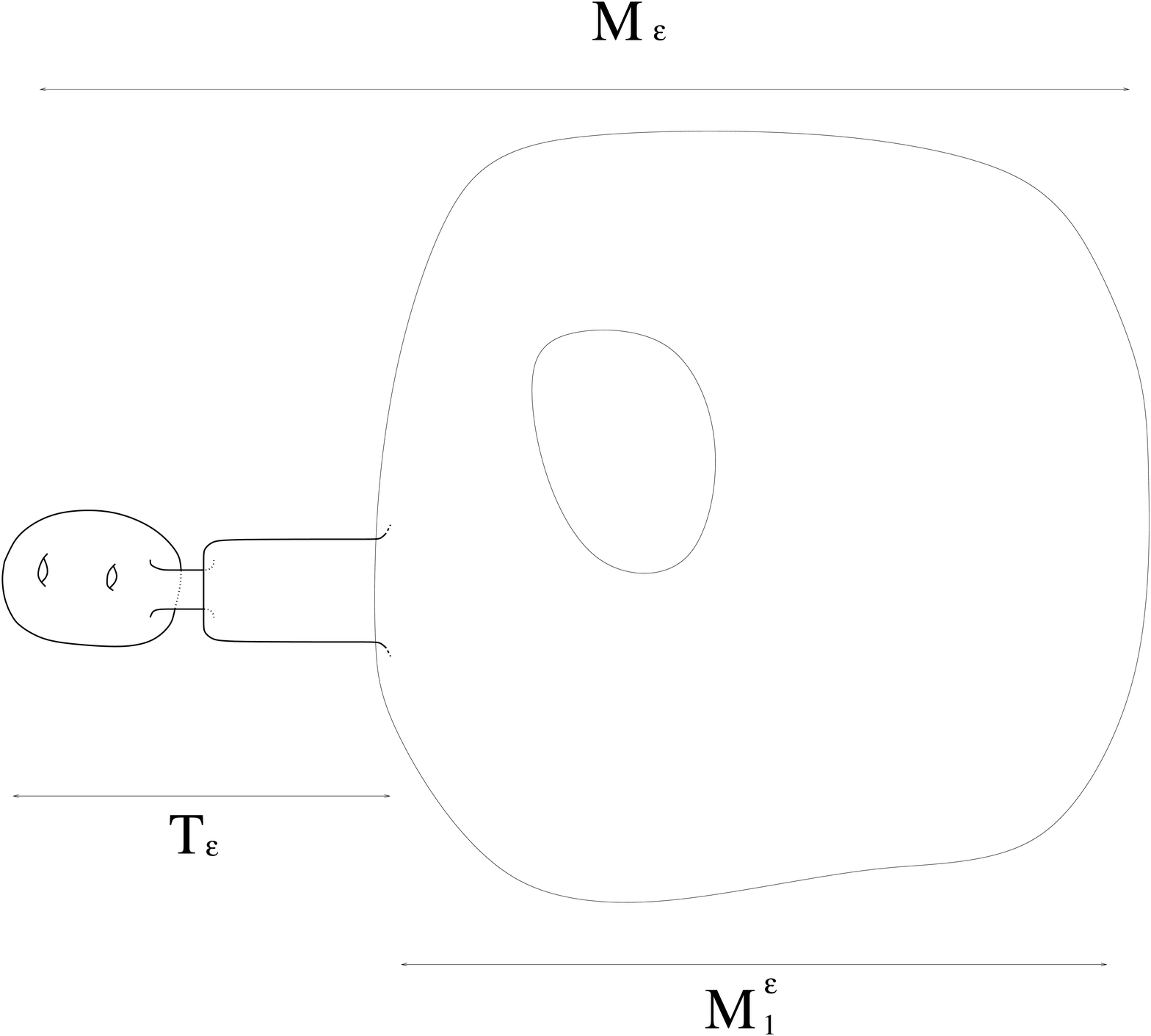}
\end{center}

By the computations above, the sequence $i_k(M_1\#M_2)=M_\frac{1}{k}$ converges to $M_1$ in Hausdorff distance and we have
\begin{align*}
\lim_{\varepsilon\to 0}\int_{M_\varepsilon}|\H_\varepsilon|^{\alpha}dv= \int_{M_1}|\H|^{\alpha}dv\quad\quad\quad \lim_{\varepsilon\to 0}\int_{M_\varepsilon}|\B_\varepsilon|^{\alpha}dv=\int_{M_1}|\B|^{\alpha}dv
\end{align*}
for any $\alpha<m$ and
\begin{align*}
\lim_{\varepsilon\to 0}\int_{M_\varepsilon}|\H_\varepsilon|^m= \int_{M_1}|\H|^{m}+\int_{D_1\cup D_3}|\H|^{m}+C(m)L+\int_{M_2}|\H|^{m}\\
\lim_{\varepsilon\to 0}\int_{M_\varepsilon}|\B_\varepsilon|^{m}=\int_{M_1}|\B|^{m}+\int_{D_1\cup D_3}|\B|^{m}+C(m)L+\int_{M_2}|\B|^{m}
\end{align*}

\subsubsection{Computation of the spectrum of $M_\varepsilon$}\label{computspec}
We will prove that there exists a sequence $(\varepsilon_p)_{p\in\N}$ such that $\varepsilon_p\to0$ and the spectrum of $M_{\varepsilon_p}$ converges to the disjoint union of ${\rm Sp}(M_1)$ and of $\{\tilde{\lambda}\}$, where $\tilde{\lambda}$ satisfies $\lambda-\frac{C(M_1)(1+\lambda)}{\sqrt{L}}\leqslant\tilde{\lambda}\leqslant\lambda$. 
Since the collapsing of $M_\varepsilon$ is multiscale, the  cutting and rescaling technique of \cite{Ann, Tak1} has to be adapted. Indeed, after rescaling of $T_\varepsilon$ we get another collapsing sequence of submanifolds with no uniform control of the trace and Sobolev Inequalities.

We denote by $(\lambda_k)_{k\in\N}$ the union with multiplicities of the spectrum of $M_1$ and of $\{\lambda\}$, by $(\lambda_k^\varepsilon)_{k\in\N}$ the spectrum of $M_\varepsilon$ and by $(\mu_k^\varepsilon)_{k\in\N}$ the Dirichlet spectrum of the disjoint union $M'_\varepsilon= T_\varepsilon\cup \Bigl(M_1^\varepsilon\setminus B_{x_1}(10\varepsilon)\Bigr)$.
By the Dirichlet principle, we have $\lambda_k^\varepsilon\leqslant\mu_k^\varepsilon$ for any $k\in\N$. 
It is well known (see for instance \cite{Co}) that the Dirichlet spectrum of $M_1^\varepsilon\setminus B_{x_1}(10\varepsilon)$ converges to the spectrum of $M_1$. 
We infer that $\mu_k^\varepsilon\to\lambda_k$ as $ \varepsilon\to 0$  and so $\limsup\lambda_k^\varepsilon\leqslant\lambda_k$ for any $k\in\N$. 

We set $\alpha_k=\liminf_{\varepsilon\to 0}\lambda_k^\varepsilon$.
To get some lower bound on the $\alpha_k$, we need some local trace inequalities. 
We set $S_t=\{x\in T_\varepsilon/\,d(x,\partial T_\varepsilon)=-t\}$ for any $t\leqslant0$ and $S_t=\{x\in M_1^\varepsilon/\,d(x,\partial M_1^\varepsilon)=t\}$ for any $t\geqslant0$. 
We also set $B_{t,r}=\cup_{\{s/\,|s-t|\leqslant r\}}S_s$, $N_r=M_1^\varepsilon\cup B_{\frac{r}{2},\frac{-r}{2}}$ for any $r\leqslant0$ and $N_r=M_1^\varepsilon\setminus B_{\frac{r}{2},\frac{r}{2}}$ for any $r\geqslant0$.
Let $a_{M_1}$ be a constant such that the volume density $\theta_\varepsilon$ of $M_\varepsilon$ in normal coordinates to $ S_{-2\varepsilon}$ satisfies $\frac{1}{a_{M_1}}(3+\frac{t}{\varepsilon})^{m-1}\geqslant\theta_\varepsilon(t,u)\geqslant a_{M_1}(3+\frac{t}{\varepsilon})^{m-1}$ for any $t\in[-2\varepsilon,a_{M_1}]$ and any $u\in S_{-2\varepsilon}$. 
Let $\varepsilon d$ be the distance in $M_\varepsilon$ between $M_1^\varepsilon$ and $\varepsilon D_2$ and $C(D_1)$ be a constant such that for any $t\in[-(L+d+2)\varepsilon,-2\varepsilon]$ and any $u\in S_{-2\varepsilon}$ we have $\frac{\theta_\varepsilon(t,u)}{\theta_\varepsilon(-2\varepsilon,u)}\in[\frac{1}{C(D_1)},C(D_1)]$.
Let $\eta:[-2\varepsilon,a_{M_1}]\to[0,1]$ be a smooth function such that $\eta(t)=1$ for any $t\leqslant\frac{a_{M_1}}{2}$, $\eta(a_{M_1})=0$ and $|\eta'|\leqslant\frac{4}{a_{M_1}}$. 
For any $r\in[-2\varepsilon,a_{M_1}/2]$ and any $f\in H^1(M_\varepsilon)$, we have
\begin{align*}
\int_{S_{r}}f^2&=\int_{S_{-2\varepsilon}}\Bigl(\int_{r}^{a_{M_1}}\frac{\partial}{\partial s}[\eta(\cdot)f(\cdot,u)]d s\Bigr)^2\theta_\varepsilon(r,u) du\\
\leqslant&\int_{r}^{a_{M_1}}\frac{\sup_{u\in S_{-2\varepsilon}}\theta_\varepsilon(r,u)}{\inf_{u\in S_{-2\varepsilon}}\theta_\varepsilon(s,u)}ds\int_{S_{-2\varepsilon}}\int_{r}^{a_{M_1}}\bigl(\frac{\partial}{\partial s} [\eta(\cdot)f(\cdot,\frac{\varepsilon}{10}u)]\bigr)^2 \theta_\varepsilon(s,u)\\
\leqslant& c(M_1)\int_{r}^{a_{M_1}}\frac{(3+r/\varepsilon)^{m-1}}{(3+s/\varepsilon)^{m-1}}ds\|f\|_{H^1(M_\varepsilon)}^2
\end{align*}
which gives
\begin{equation}\label{Speneg}
\int_{S_r}f^2\leqslant c(M_1)(3\varepsilon+r)\|f\|_{H^1(M_\varepsilon)}^2
\end{equation}
when $m\geqslant 3$. By the same way, for any $r\in[-(L+d+2)\varepsilon,-2\varepsilon]$, we have
\begin{equation}\label{Spepos}
\int_{S_{r}}f^2\leqslant -c(M_1,D_1)(\varepsilon+r)\|f\|_{H^1(M_\varepsilon)}^2
\end{equation}

We now use this local trace inequality to get some estimates on the eigenfunctions of $M_\varepsilon$. We set $\varphi:M_\varepsilon\to[0,1]$ be a smooth function equal to $1$ on $N_{21\varepsilon/2}\cup \bigl(M_\varepsilon\setminus N_{-\varepsilon/2}\bigr)$, equal to $0$ outside $M'_\varepsilon$ and such that $|\varphi'|\leqslant\frac{4}{\varepsilon}$. For any $f_1,f_2\in H^1(M_\varepsilon)$, integration of Inequalities \eqref{Speneg} and \eqref{Spepos} gives us 
\begin{align}\label{cutscal}
\bigl|\int_{M_\varepsilon}f_1f_2-\int_{M_{\varepsilon}}\varphi f_1\varphi f_2\bigr|&\leqslant \int_{M_{\varepsilon}}|\varphi^2-1| |f_1||f_2|\leqslant c(M_1)\varepsilon^2\|f_1\|_{H^1(M_\varepsilon)}\|f_2\|_{H^1(M_\varepsilon)}
\end{align}
and
\begin{align}
\int_{M_\varepsilon}|d\varphi f_1|^2&\leqslant \int_{M_\varepsilon}|d\varphi|^2f_1^2+2\varphi f_1(df_1,d\varphi)+\varphi^2|df_1|^2\nonumber\\
&\leqslant\frac{16}{\varepsilon^2}\|f_1\|_{L^2({\rm Supp}(d\varphi))}^2+\frac{8}{\varepsilon}\|f_1\|_{L^2({\rm Supp}(d\varphi))}\|df_1\|_2+\|df_1\|_2^2\nonumber\\\label{cutdir}
&\leqslant c(M_1)\|f_1\|^2_{H^1(M_\varepsilon)}
\end{align}

Let $(f_k^\varepsilon)$ be a $L^2$-orthonormal, complete set of eigenfunctions of $M_\varepsilon$.
For any $k$, we set $\tilde{f}_k^\varepsilon$ the function on $M_1$ equal to $\varphi f_k^\varepsilon$ on $N_{10\varepsilon}$ and extended by $0$. By Inequality \eqref{cutdir}, we have $\|\tilde{f}_k^\varepsilon\|_{H^1( M_1)}^2\leqslant c(M_1)(1+\lambda_k)$ for $\varepsilon$ small enough. We infer by diagonal extraction that there exists some sequences $(\varepsilon_p)_{p\in\N}$ and $(h_k)_{k\in\N}\in H^1(M_1)^\N$ such that  $\lambda_k^{\varepsilon_p}\to\alpha_k$ and $(\tilde{f}_k^{\varepsilon_p})_p$ converges weakly in $H^1(M_1)$ and strongly  in $L^2(M_1)$ to $h_k$, for any $k$. It is easy to prove that $h_k$ is a weak solution of $\Delta h_k=\alpha_k h_k$ on $H^1(M_1\setminus\{x_1\})=H^1(M_1)$. By elliptic regularity, either $h_k=0$ or $\alpha_k$ is an eigenvalue of $M_1$.

Let $k_0\in\N$ such that $\lambda_{k_0}=\lambda$. Since $D_2$ isometric to $[0,L]\times\S^{m-1}$, any $f^{\varepsilon_p}_{k}$ can be seen as a function on $[0,\varepsilon_p L]\times\varepsilon_p\S^{m-1}$. For any $f=\sum_{i\leqslant k_0}\beta_if_i^{\varepsilon_p}\in{\rm Vect}\{f_i^{\varepsilon_p}/\,i\leqslant k_0\}$, we define the rescaling $F_{p}$ on $c=[0,1]\times\S^{m-1}$ by  $F_{p}(t,x)=\varepsilon_p^{\frac{m}{2}-1}L^{-\frac{1}{2}}f(\varepsilon_p L t,\varepsilon_p x)$. By Inequality \eqref{Spepos}, we have 
$$\displaylines{
\int_cF^2_{p}=\frac{1}{\varepsilon_p^2L^2}\int_{\varepsilon_p D_2}f^2\leqslant c(M_1,D_1)(1+\frac{2}{L})(1+\lambda)\|f\|_2^2,\cr \int_{\{0\}\times\S^{m-1}}F_{p}^2=\frac{1}{L\varepsilon_p}\int_{\varepsilon_p( D_1\cap D_2)}f^2\leqslant \frac{c(M_1,D_1)(1+\lambda)\|f\|_2^2}{L},\cr 
\mbox{and }\ \ \ \int_{\{1\}\times\S^{m-1}}F_{p}^2=\frac{1}{L\varepsilon_p}\int_{\varepsilon_p( D_3\cap D_2)}f^2\leqslant (1+\lambda)c(M_1,D_1)(1+\frac{d}{L})\|f\|_2^2,
}$$
for $p$ large enough (note that we have $d\geqslant 2$ by construction). Moreover, we have $\int_c|dF_{p}|^2\leqslant\int_{\varepsilon_pD_2}|df|^2\leqslant \lambda\|f\|_2^2$. So we can assume that there exists $F_{\infty}\in H^1(c)$ such that the sequence $(F_{p})$ converges to $F_{\infty}$ weakly in $H^1(c)$ and strongly in $L^2(c)$. 
We set $j_{p}(t)=\int_{\S^{m-1}}F_{p}(t,x)dx$ and $j_{\infty}(t)=\int_{\S^{m-1}}F_{\infty}(t,x)dx$, we have $j_{p},j_{\infty}\in H^1([0,1])$ (with $j_{p}'(t)=\int_{\S^{m-1}}\frac{\partial F_{p}}{\partial t}(t,x)dx$), $j_{p}\to j_{\infty}$ strongly in $L^2([0,1])$ and weakly in $H^1([0,1])$. By the estimates above and the compactness of the trace operator on $c$, we have $|j_{\infty}(0)|\leqslant\frac{C(M_1)\sqrt{1+\lambda}\|f\|_2}{\sqrt{L}}$ and $|j_{\infty}(1)|\leqslant \sqrt{1+\lambda}\|f\|_2C(M_1)$. Hence $l(t)=j_{\infty}(t)-\bigl(j_{\infty}(0)+(j_{\infty}(1)-j_{\infty}(0))t\bigr)$ is in $H^1_0([0,1])$. For any $\psi\in\mathcal{C}_c^\infty([0,1])$, we set $\psi_p(t,x)=\varepsilon_p L\psi(\frac{t}{\varepsilon_p L})$ seen as a function in $H^1_0(\varepsilon_p D_2)$. We have
\begin{align*}&\int_0^1l'\psi'\,dt=\int_0^1j_{\infty}'\psi'\,dt\\
&=\lim_p\int_0^1j_{p}'(t)\psi'(t)\,dt=\lim_p\int_{c}\frac{\partial F_{p}}{\partial t}\psi'=\lim_p\frac{1}{\varepsilon_p^\frac{m}{2}\sqrt{L}}\int_{\varepsilon_p D_2}\langle df,d\psi_p\rangle\, dt\, dx\\
&=\lim_p\sum_i\frac{\beta_i\lambda_i^{\varepsilon_p}}{\varepsilon_p^\frac{m}{2}\sqrt{L}}\int_{\varepsilon_p D_2}f_i^{\varepsilon_p}\psi_p\, dt\, dx=\sum_i\alpha_i\beta_i L^2\lim_p\varepsilon_p^2\int_{c}F_{i,p}\psi\, dt\, dx\\
&=0,
\end{align*}
where $F_{i,p}(t,x)=\varepsilon_p^{\frac{m}{2}-1}L^{-\frac{1}{2}}f_i^{\varepsilon_p}(\varepsilon_p L t,\varepsilon_p x)$. We infer $l$ is harmonic and in $H^1_0([0,1])$, i.e. $l=0$ and $j_{\infty}(t)=j_{\infty}(0)+(j_{\infty}(1)-j_{\infty}(0))t$ on $[0,1]$. Since the Poincare inequality on $\S^{m-1}$ gives us 
\begin{align*}
\int_{\S^{m-1}}F_p(t,x)^2\,dx&\leqslant\frac{1}{\Vol\S^{m-1}}\bigl(\int_{\S^{m-1}}F_p(t,x)\,dx\bigr)^2+\frac{1}{m-1}\int_{\S^{m-1}}|d_{\S^{m-1}}F_p|^2\\
&\leqslant\frac{1}{\Vol\S^{m-1}}j_p^2(t)+\frac{\varepsilon_p}{(m-1)L}\int_{\varepsilon_p\S^{m-1}}|d_{\varepsilon_p\S^{m-1}}f|^2(\varepsilon_p L t,x)\,dx,
\end{align*}
 we get that
\begin{align*}
\frac{1}{L\varepsilon_p^2}&\int_{[0,\varepsilon_p\sqrt{L}]\times\varepsilon_p\S^{m-1}}f^2=L\int_{[0,\frac{1}{\sqrt{L}}]\times \S^{m-1}}F_{p}^2\\
&\leqslant\frac{L}{\Vol\S^{m-1}}\int_0^{\frac{1}{\sqrt{L}}} j_p^2(t)\,dt+\frac{1}{(m-1)L}\int_{[0,\varepsilon_p\sqrt{L}]\times\varepsilon_p\S^{m-1}}|d_{\varepsilon_p\S^{m-1}}f|^2\\
&\leqslant\frac{L}{\Vol\S^{m-1}}\int_0^{\frac{1}{\sqrt{L}}} j_p^2(t)\,dt+\frac{\lambda}{(m-1)L}\|f\|^2\\
&\to \frac{L}{\Vol\S^{m-1}}\int_0^{\frac{1}{\sqrt{L}}} j_{\infty}(t)^2\, dt+\frac{\lambda}{(m-1)L}\|f\|^2\leqslant \frac{C(M_1)(1+\lambda)\|f\|^2}{\sqrt{L}}
\end{align*}

If the family $(h_i)_{i< k_0}$ is not free in $L^2(M_1)$, then either one $h_i$ is null or they are all eigenfunctions of $M_1$. Since the eigenspaces are in direct sum, we infer that there exists $\mu\leqslant\lambda_{k_0-1}$ and $(\beta_i)\in\R^{k_0}\setminus\{0\}$ such that $\sum_i\beta_i^2=1$, $\sum_i\beta_ih_i=0$ and $\alpha_i=\mu$ for any $i$ such that $\beta_i\neq 0$. We set $f=\sum_i\beta_if_i^{\varepsilon_p}$ and $\eta:M_{\varepsilon_p}\to[0,1]$ a smooth function equal to $1$ on $M_{\varepsilon_p}\setminus N_{-(2+d+\sqrt{L})\varepsilon_p}$, equal to $0$ on $N_{-(2+d)\varepsilon_p}$ and such that $|\varphi'|\leqslant\frac{2}{\varepsilon_p\sqrt{L}}$.  We then have
\begin{align}
&|\int_{M_{\varepsilon_p}}|d(\eta f)|^2-\mu\int_{M_{\varepsilon_p}}(\eta f)^2\bigr|=\bigl|\int_{M_{\varepsilon_p}}|d\eta|^2f^2+\langle df,d(\eta^2f)\rangle -\mu\int_{M_{\varepsilon_p}}(\eta f)^2\bigr|\nonumber\\
\label{strict}& \leqslant \frac{C(M_1)(1+\lambda)}{\sqrt{L}}\|f\|^2_2+\int_{M_{\varepsilon_p}}\sum_{i,j}(\lambda_i^{\varepsilon_p}-\mu)\beta_i\eta f_i^{\varepsilon_p}\beta_j\eta f_j^{\varepsilon_p}
\end{align}
Inequalities \eqref{cutscal}  and \eqref{Spepos} imply that $\int_{M_{\varepsilon_p}}(\eta f)^2\to 1$. Since $\eta f\in H^1_0(T_{\varepsilon_p})$ and since by construction of $T_{\varepsilon_p}$, we have $\lambda_1^D(T_{\varepsilon_p})=\lambda$, we then have $\int_{M_{\varepsilon_p}}|d(\eta f)|^2\geqslant \lambda\int_{M_{\varepsilon_p}}(\eta f)^2$. Letting $p$ tend to $\infty$ in Inequality  \eqref{strict} we get that $\lambda-\lambda_{k_0-1}\leqslant \frac{C(M_1)(1+\lambda)}{\sqrt{L}}$, which contradicts the choice made on $L$ at the beginning of this subsection. 

We infer that $(h_i)_{i<k_0}$ is free in $L^2(M_1)$. This implies that $\alpha_i$ is an eigenvalue of $M_1$ and $h_i$ is an eigenfunction of $M_1$ for any $i<k_0$. Since $\alpha_i=\lim \lambda_i^{\varepsilon_p}\leqslant\lambda_i=\lambda_i(M_1)$ for any $i<k_0$, we infer that $\alpha_i=\lambda_i$ for any $i<k_0$ and that the $(h_i)_{i<k_0}$ is a basis of the eigenspaces of $M_1$ associated to the first $k_0$ eigenvalues.
By the same way, if $h_{k_0}\neq 0$, then $\alpha_{k_0}=\lambda_{k_0-1}$ (since it is an eigenvalue of $M_1$ less than $\lambda$) and so the family $(h_i)_{i\leqslant k_0}$ is not free. The same argument as above gives a contradiction. So we have that $h_{k_0}=0$.

Assume that there exists another index $l\neq k_0$ such that $h_l=0$. Then, Inequality \eqref{cutscal} gives that $\int_{T_{\varepsilon_p}}\varphi f_{k_0}^{\varepsilon_p}\varphi f_l^{\varepsilon_p}\to 0$, $\int_{T_{\varepsilon_p}}(\varphi f_{k_0}^{\varepsilon_p})^2\to 1$ and $\int_{T_{\varepsilon_p}}(\varphi f_l^{\varepsilon_p})^2\to 1$ and Inequality \eqref{cutdir} gives that $\int_{T_{\varepsilon_p}}|d\varphi f_{k_0}^{\varepsilon_p}|^2$ and $\int_{T_{\varepsilon_p}}|d\varphi f_{l}^{\varepsilon_p}|^2$ remain bounded as $\varepsilon_p\to 0$. We set $g_p$ a unitary eigenfunction of $T_{\varepsilon_p}$ for the Dirichlet problem associated to the eigenvalue $\lambda$. If we set $(\varphi f_{k_0}^{\varepsilon_p})_{|T_{\varepsilon_p}}=\beta_{k_0}^pg_p+\delta_{k_0}^p$ and $(\varphi f_l^{\varepsilon_p})_{|T_{\varepsilon_p}}=\beta_l^pg_p+\delta_l^p$, with 
$\beta^p_{k_0},\beta^p_l\in\R$ and $\delta_{k_0}^p,\delta_l^p$ orthogonal to $g_p$ in $H^1_0(T_{\varepsilon_p})$. The previous relations and the lower bound on $\lambda_2^D(T_{\varepsilon_p})$ imply that 
$$\int_{T_{\varepsilon_p}}|d(\varepsilon f_{k_0}^{\varepsilon_p})|^2\geqslant\lambda(\beta_{k_0}^p)^2µ+\lambda_2^D(T_{\varepsilon_p})\|\delta_{k_0}^p\|_{L^2(T_{\varepsilon_p})}^2\geqslant(\beta^p_{k_0})^2\lambda+\frac{\Lambda_2}{\varepsilon_p^2}\|\delta^p_{k_0}\|^2_{L^2(T_{\varepsilon_p})}.$$
By the same way, $(\beta^p_l)^2\lambda+\frac{\Lambda_2}{\varepsilon_p^2}\|\delta^p_l\|^2_{L^2(T_{\varepsilon_p})}$ is bounded, and so $\|\delta^p_{k_0}\|^2_{L^2(T_{\varepsilon_p})}$ and $\|\delta^p_l\|^2_{L^2(T_{\varepsilon_p})}$ tend to $0$ with $\varepsilon_p$. Now, we have $(\beta_{k_0}^p)^2+\|\delta^p_{k_0}\|^2_{L^2(T_{\varepsilon_p})}\to1$ and so $|\beta_{k_0}^p|\to 1$. By the same way, we have $|\beta_l^p|\to 1$, which contradicts the fact that $\int_{T_{\varepsilon_p}}\varphi f_{k_0}^{\varepsilon_p}\varphi f_l^{\varepsilon_p}\to 0$. We infer that for any $k\in\N\setminus\{k_0\}$ we have that $\alpha_k$ is an eigenvalue of $M_1$. Moreover, if we decompose $(\varphi f_k^{\varepsilon_p})_{|T_{\varepsilon_p}}=\beta_k^pg_p+\delta_k^p$ as above, Inequality \eqref{cutdir} implies that $(\beta_k^p)^2+\frac{\Lambda_2}{\varepsilon_p^2}\|\delta^p_k\|^2_{L^2(T_{\varepsilon_p})}$ remains bounded and so we have $\lim\|\delta^p_k\|^2_{L^2(T_{\varepsilon_p})}=0$ and Inequality \eqref{cutscal} gives
$$0=\lim\int_{M_\varepsilon} f_{k_0}^{\varepsilon_p}f_{k}^{\varepsilon_p}=\lim \beta_k^p\beta_{k_0}^p=\lim \beta_k^p$$
and so $(\varphi f_{k}^{\varepsilon_p})_{|T_{\varepsilon_p}}\to 0$ in $L^2(T_{\varepsilon_p})$ for any $k\neq k_0$. Once again, Inequality \eqref{cutscal} gives us that for any $k,l\in\N \setminus\{k_0\}$, we have
$$\int_{M_1} h_kh_l=\delta_{kl}.$$
From the min-max principle, it gives that we have $\alpha_k\geqslant\lambda_k$ for any $k\neq k_0$. Since we have $\alpha_k\leqslant \lambda_k$ for any $k\in\N$, we infer that for any $k\in\N\setminus\{k_0\}$ we have $\alpha_k=\lambda_k$. Finally, Inequality \eqref{strict}, applied to $f=f_{k_0}^{\varepsilon_p}$ and $\mu=\alpha_{k_0}$ gives that $\alpha_{k_0}\in[\lambda-\frac{C(M_1)(1+\lambda)}{\sqrt{L}},\lambda]$.

\subsubsection{End of the proof of Theorem \ref{ctrexple4} and case $\alpha=m$}

Since we can take $L$ as large as needed while keeping $\int_{M_\varepsilon}|\B|^\alpha\to\int_{M_1}|B|^\alpha$ for any $\alpha<n$, we get Theorem \ref{ctrexple4} for $F={\rm Sp}(M_1)\cup\{\lambda\}$ by diagonal extraction. Iterating the construction (with $M_2$ replaced by $\S^m$ for any supplementary gluing) we get the result for any disjoint union $F={\rm Sp}(M_1)\cup\{$finite set$\}$ and then for any $F$, since any closed set $F$ is the limit in pointed-Hausdorff topology of a sequence of finite sets.

In the case $\alpha=m$, the limit $\int_{M_\varepsilon}|\B|^m$ depend on $L$ and so we are only able to get a weak version of Theorem \ref{ctrexple4} with $F={\rm Sp}(M_1)\cup G$, where $G$ is a finite set whose elements are known up to an error term and where the point 2) is replaced by $\int_{i_k(M_1\# M_2)}|\B|^m$ is bounded by a constant that depend on $M_1$, $M_2$, $D_1$, $D_3$, $G$ and on the error term.

\subsection{Example \ref{ctrexple1}}\label{pot3}
We set $\ieps=[\varepsilon,\frac{\pi}{2}]$ for $\varepsilon>0$ and let $\varphi : \ieps\longrightarrow (-1,+\infty)$ be a function continuous on $\ieps$ and smooth on $(\varepsilon,\frac{\pi}{2}]$. For any $0\leqslant k\leqslant n-2$, we consider the map
$$\begin{array}{rcl}\Phi_\varphi: \S^{n-k-1}\times\S^{k}\times \ieps &\longrightarrow &\R^{n+1}=\R^{n-k}\oplus\R^{k+1}\\
x=(y,z,r) & \longmapsto &  (1+\varphi(r))(y\sin r+z\cos r)\end{array}$$
whose image $X_\varphi$ is a smooth embedded submanifold (with boundary) diffeomorphic to $\S^n\setminus B(\S^k,\varepsilon)$.
We denote respectively by $\B_q (\varphi)$ and $\H_q(\varphi)$ the second fundamental form and the mean curvature of $X_\varphi$ at the point $q$. They are given by the following formulae.

\begin{lemma}\label{courbmoy} Let $x=(y,z,r)\in\S^{n-k-1}\times\S^{k}\times \ieps $, $q=\Phi_\varphi(x)$ and $(u,v,h)\in T_x \xeps$. Then we have
$$\displaylines{
n\H_{q}(\varphi)=\bigl( \coeff\bigr)^{-3/2}\Bigl[-(1+\varphi(r))\varphi''(r)+(1+\varphi(r))^2+2\varphi'^2(r)\Bigr]\hfill\cr
\hfill+\frac{\bigl( \coeff\bigr)^{-1/2}}{1+\varphi(r)}\Bigl[ -(n-k-1)\varphi'(r)\cot r+(n-1)(1+\varphi(r))+k\varphi'(r)\tan r\Bigr]\cr
\cr
|\B _{q}(\varphi)|=\hfill\cr
\hfill\frac{(1+\varphi(r))^{-1}}{\bigl(1+(\frac{\varphi'(r)}{1+\varphi(r)})^2\bigr)^{1/2}}\max\Bigl(\bigl|1-\frac{\varphi'}{1+\varphi}\cot r\bigr|,\bigl|1+\frac{\varphi'}{1+\varphi}\tan r\bigr|,\bigl|1+\frac{(\varphi')^2-(1+\varphi)\varphi''}{\coeff}\bigr|\Bigl)}$$
\end{lemma}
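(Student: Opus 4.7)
The plan is to exploit the $O(n-k)\times O(k+1)$-symmetry of the parametrization $\Phi_\varphi$. This symmetry produces a $d\Phi_\varphi$-invariant orthogonal decomposition of each tangent space of $X_\varphi$ into three pairwise orthogonal subspaces (the lifts of $T_y\S^{n-k-1}$, of $T_z\S^k$, and the line $\R\partial_r$). The second fundamental form must be diagonal in this splitting, so the formulas in the lemma will simply be the corresponding principal curvatures, each with multiplicity $n-k-1$, $k$, or $1$.

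First I would introduce the pair of orthonormal vectors $N_1=y\sin r+z\cos r$ and $N_2=y\cos r-z\sin r$ in $\R^{n+1}$. A direct differentiation shows that $d\Phi_\varphi$ sends $u\in T_y\S^{n-k-1}$ to $(1+\varphi)(\sin r)u$, $v\in T_z\S^k$ to $(1+\varphi)(\cos r)v$, and $\partial_r$ to $\varphi'N_1+(1+\varphi)N_2$; these three families are Euclidean-orthogonal because $\R^{n-k}\perp\R^{k+1}$ and $u\perp y$, $v\perp z$. The unit normal $\nu$ must therefore lie in the $2$-plane spanned by $N_1$ and $N_2$; solving $\langle\nu,\partial_r\Phi_\varphi\rangle=0$ gives, up to orientation, $\nu=W^{-1}\bigl((1+\varphi)N_1-\varphi'N_2\bigr)$ with $W=\sqrt{\varphi'^2+(1+\varphi)^2}$.

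The three principal curvatures are obtained by taking the second derivative of $\Phi_\varphi$ along each invariant direction, pairing with $\nu$, and dividing by the squared length of the tangent vector. Using a geodesic $y(t)$ in $\S^{n-k-1}$ with $y''(0)=-y$, one finds $\partial_t^2\Phi_\varphi|_0=-(1+\varphi)(\sin r)y$, hence $\kappa_1=W^{-1}\bigl(1-\tfrac{\varphi'}{1+\varphi}\cot r\bigr)$ after dividing by $(1+\varphi)^2\sin^2 r$; the analogous computation in $\S^k$ yields $\kappa_2=W^{-1}\bigl(1+\tfrac{\varphi'}{1+\varphi}\tan r\bigr)$. For the remaining direction, $\partial_r^2\Phi_\varphi=(\varphi''-(1+\varphi))N_1+2\varphi'N_2$ and $|\partial_r\Phi_\varphi|^2=W^2$, producing $\kappa_3=W^{-3}\bigl((1+\varphi)^2+2\varphi'^2-(1+\varphi)\varphi''\bigr)$.

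Finally, $n\H=(n-k-1)\kappa_1+k\kappa_2+\kappa_3$, and collecting the $W^{-1}$ and $W^{-3}$ terms reproduces the stated formula for $n\H_q(\varphi)$, with the global sign fixed by the condition $\H\equiv 1$ on the standard sphere ($\varphi\equiv 0$). For $|\B_q(\varphi)|$, which in this paper is to be understood as the operator norm of the shape operator, the three principal directions diagonalize $\B$, so $|\B_q(\varphi)|=\max(|\kappa_1|,|\kappa_2|,|\kappa_3|)$; rewriting $W^{-1}=(1+\varphi)^{-1}\bigl(1+(\varphi'/(1+\varphi))^2\bigr)^{-1/2}$ and $W^{-3}=W^{-1}\cdot W^{-2}$ makes $|\kappa_1|$, $|\kappa_2|$, $|\kappa_3|$ coincide exactly with the three expressions in the stated maximum. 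The main obstacle is purely bookkeeping: the symmetry decouples the computation into three independent one-dimensional problems, each of which is elementary.
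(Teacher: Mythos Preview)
Your proof is correct and follows essentially the same direct computation as the paper's appendix: both identify the orthogonal splitting of $T_qX_\varphi$ induced by the product structure, compute the three principal curvatures $\kappa_1,\kappa_2,\kappa_3$, and read off $n\H$ and $|\B|$ as their weighted sum and maximum. The only cosmetic differences are that the paper differentiates the normal field (via $\B(w,w)=\langle\nabla^0_w N,w\rangle$) rather than taking second derivatives of the immersion, and establishes the block-diagonality by explicit calculation rather than by the symmetry argument you invoke.
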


To prove Theorem \ref{ctrexple1}, we set $a<\frac{\pi}{10}$ and define the function $\fieps$ on $I_\varepsilon$ by
$$\fieps(r)=\left\{\begin{array}{ll}f_{\varepsilon}(r)=\varepsilon\displaystyle\int_1^{\frac{r}{\varepsilon}}\frac{dt}{\sqrt{t^{2(n-k-1)}-1}} & \text{if } \varepsilon\leqslant r\leqslant a+\varepsilon,\\[3mm]
u_{\varepsilon}(r) & \text{if } r\geqslant a+\varepsilon,\\[2mm]
b_\varepsilon& \text{if }r\geqslant 2a+\varepsilon,
\end{array}\right.$$
where $b_\varepsilon$ is a constant and $u_{\varepsilon}$ is chosen so that $\fieps$ is smooth on $(\varepsilon,\frac{\pi}{2}]$ and strictly concave on $(\varepsilon,2a+\varepsilon]$. Since we have $f_{\varepsilon}(x)\to 0$, $f'_{\varepsilon}(x)\to 0$, $f''_{\varepsilon}(x)\to 0$ for any fixed $x\in(\varepsilon,a+\varepsilon]$, the concavity implies that $b_{\varepsilon}\to 0$ as $\varepsilon\to0$ (hence $b_\varepsilon$ can be chosen less than $\frac{1}{2}$), that $\varphi_\varepsilon\to 0$ uniformly on $I_\varepsilon$ and that $\varphi_\varepsilon'$ converges uniformly to $0$ on any compact subset of $(\varepsilon,\frac{\pi}{2}]$. Moreover, $u_\varepsilon$ can be chosen such that $\varphi_\varepsilon''$ converges to $0$ uniformly on any compact subset of  $(\varepsilon,\frac{\pi}{2}]$.

On $(\varepsilon,a+\varepsilon]$, $\varphi_\varepsilon$ satisfies
\begin{align}\label{equadif}\varphi_\varepsilon''=-\frac{(n-k-1)(1+\varphi_\varepsilon'^2)}{r}\varphi_\varepsilon',\end{align} $\fieps(\varepsilon)=0$ and $\displaystyle\lim_{t\to\varepsilon}\varphi_\varepsilon'(t)=+\infty=-\lim_{t\to\varepsilon}\varphi_\varepsilon''(t)$. On $(-b_{\varepsilon},b_{\varepsilon})$, we define $\fiteps$ by $\fiteps(t)=\fieps^{-1}(|t|)$. Since $\fiteps$ satisfies the equation $yy''=(n-k-1)\bigl(1+(y')^2\bigr)$ with initial data $\fiteps(0)=\varepsilon$ and $\fiteps'(0)=0$, it is smooth at $0$, hence on $(-b_\varepsilon,b_\varepsilon)$.

Now we consider the two applications $\Phi_{\varphi_\varepsilon}$ and $\Phi_{-\varphi_\varepsilon}$ defined as above, and we set $\mpeps=X_{\varphi_\varepsilon}$, $\mmeps=X_{-\varphi_\varepsilon}$ and  $\meps^k=\mpeps\cup\mmeps$. $M^k_\varepsilon$ is a smooth submanifold of $\R^{n+1}$ since the function $F_{\varepsilon}(p_1,p_2)= |p_1|^2-|p|^2\sin^2\bigl(\fiteps(|p|-1)\bigr)$, defined on 
$$U=\{p=(p_1,p_2)\in\R^{n-k}\oplus\R^{k+1}/\,p_1\neq 0,\, p_2\neq 0,\, -b_{\varepsilon}+1<|p|<b_{\varepsilon}+1\}$$ gives a smooth, local equation of $M^k_\varepsilon$ at the neighborhood of $\mpeps\cap\mmeps$ that satisfies
$$\nabla F_{\varepsilon}(p_1,p_2)=2p_1\cos^2\varepsilon-2p_2\sin^2\varepsilon\neq 0$$
on  $\mpeps\cap\mmeps$.

We denote respectively by $\Heps$ and $\beps$, the mean curvature and the second fundamental form of $\meps^k$. 

\begin{theorem}\label{ctrexple2} $\|\Heps\|_{\infty}^{~}$ and $\|\B _\varepsilon\|_{n-k}^{~}$ remain bounded whereas $\|\Heps-1\|_1\to 0$ and $\bigl\||X|-1\bigr\|_\infty\to0$ when $\varepsilon\to 0$.
\end{theorem}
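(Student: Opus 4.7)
The approach is to analyse $H_\varepsilon$ and $|\B_\varepsilon|$ separately on each of the three sub-intervals on which $\varphi_\varepsilon$ is defined, using the explicit formulas of Lemma \ref{courbmoy}: the critical range $(\varepsilon,a+\varepsilon]$ where $\varphi_\varepsilon=\pm f_\varepsilon$ satisfies the ODE \eqref{equadif}, the smoothing range $[a+\varepsilon,2a+\varepsilon]$ where $\varphi_\varepsilon=\pm u_\varepsilon$, and the outer range $[2a+\varepsilon,\pi/2]$ where $\varphi_\varepsilon=\pm b_\varepsilon$ is constant. Two of the four claims are immediate: $|X|=1+\varphi_\varepsilon(r)$ combined with the uniform convergence $\varphi_\varepsilon\to 0$ on $I_\varepsilon$ gives $\||X|-1\|_\infty\to 0$; on the outer range the piece $M_\varepsilon^\pm$ is a spherical cap of radius $1\pm b_\varepsilon$, so $H_\varepsilon=(1\pm b_\varepsilon)^{-1}\to 1$ and $|\B_\varepsilon|=(1\pm b_\varepsilon)^{-1}$ is bounded; on the smoothing range $u_\varepsilon$ and its first two derivatives converge uniformly to zero (as noted in the construction), so Lemma \ref{courbmoy} yields $H_\varepsilon\to 1$ and $|\B_\varepsilon|\to 1$ uniformly. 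All of the work therefore concentrates on the critical range.

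On the critical range, $\varphi'_\varepsilon(r)\to+\infty$ as $r\to\varepsilon^+$, which at first sight makes several terms in Lemma \ref{courbmoy} blow up. The ODE \eqref{equadif} is designed precisely so that the leading divergences cancel. To bound $\|H_\varepsilon\|_\infty$, I would substitute $-(1+\varphi)\varphi''=(n-k-1)(1+\varphi)(1+\varphi'^2)\varphi'/r$ into the expression of $nH$ and collect the $\varphi'$-terms. The potentially divergent contribution becomes
\begin{align*}
\frac{(n-k-1)\varphi'}{A^{3/2}(1+\varphi)}\Bigl\{(1+\varphi)^2\bigl[\tfrac{1}{r}-\cot r\bigr]+\varphi'^2\bigl[\tfrac{(1+\varphi)^2}{r}-\cot r\bigr]\Bigr\},\qquad A=(1+\varphi)^2+\varphi'^2,
\end{align*}
which is the algebraic heart of the proof. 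The Taylor expansion $\cot r=1/r-r/3+O(r^3)$ shows that the first square bracket is $O(r)$ and the second one is $(2\varphi+\varphi^2)/r+O(r)$. Combined with the elementary bound $|\varphi_\varepsilon(r)|/r\leqslant C(n,k)$ on $I_\varepsilon$ (verified directly from the formula defining $f_\varepsilon$) and with $\varphi'/A^{3/2}\leqslant 1$, $\varphi'^3/A^{3/2}\leqslant 1$ (both since $A\geqslant\varphi'^2$), the whole displayed quantity is uniformly bounded. The remaining terms of $nH_\varepsilon$ each carry a compensating negative power of $A$ and are trivially $O(1)$, giving $\|H_\varepsilon\|_\infty\leqslant C(n)$.

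For $\|\B_\varepsilon\|_{n-k}\leqslant C(n)$ I would bound the three principal curvatures (read off Lemma \ref{courbmoy} after eliminating $\varphi''$ via the ODE) and split the critical range into three sub-ranges according to the size of $\varphi'$ and of $\varphi'\cot r$: $r\in[\varepsilon,\sqrt{2}\varepsilon]$ (where $\varphi'\geqslant 1$, giving $|\B_\varepsilon|\lesssim 1/r$), $r\in[\sqrt{2}\varepsilon,\varepsilon^{(n-k-1)/(n-k)}]$ (where $\varphi'<1$ but $\varphi'\cot r\geqslant 1$, giving $|\B_\varepsilon|\lesssim \varepsilon^{n-k-1}/r^{n-k}$), and $r\in[\varepsilon^{(n-k-1)/(n-k)},a+\varepsilon]$ (where $\varphi'\cot r<1$, giving $|\B_\varepsilon|\lesssim 1$). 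Combined with the area element $\sqrt{A}(1+\varphi)^{n-1}\sin^{n-k-1}r\cos^k r\,dr\,dy\,dz$ and the rescaling $u=r/\varepsilon$, the contribution of each sub-range to $\int|\B_\varepsilon|^{n-k}\,dv$ reduces to a universal, $\varepsilon$-independent integral (for the first two, of the type $\int u^{m-1}/\sqrt{u^{2m}-1}\,du$ or a power integral with summable exponent, with $m=n-k-1$) or to a trivially bounded integral (for the third). Adding the bounded contributions of the other two sub-intervals of $I_\varepsilon$ gives the claim.

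The $L^1$-convergence $\|H_\varepsilon-1\|_1\to 0$ is then obtained by splitting $M_\varepsilon$ into $\{r\leqslant\varepsilon^{1/4}\}$ and its complement: on the first piece, the uniform bound $\|H_\varepsilon\|_\infty\leqslant C$ combined with the volume estimate $\Vol\{r\leqslant\varepsilon^{1/4}\}\to 0$ (obtained by the same rescaling argument as above) gives a vanishing $L^1$-contribution; on its complement, a direct inspection using the explicit form of $f_\varepsilon$ gives $\varphi_\varepsilon,\varphi'_\varepsilon,\varphi''_\varepsilon\to 0$ uniformly, hence $H_\varepsilon\to 1$ uniformly by Lemma \ref{courbmoy}, and dominated convergence concludes. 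The main obstacle is the cancellation analysis in the second paragraph: the ODE \eqref{equadif} is tailored so that the leading divergent contributions from $-(1+\varphi)\varphi''$ (replaced by $(n-k-1)(1+\varphi)(1+\varphi'^2)\varphi'/r$) and from $-(n-k-1)\varphi'\cot r$ combine into the benign factor $\varphi'\cdot(1/r-\cot r)=O(r\varphi')$, and any careless grouping of these two terms leads to a false prediction that $H_\varepsilon$ blows up.
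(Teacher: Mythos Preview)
Your argument is correct and follows essentially the same route as the paper: exploit the ODE \eqref{equadif} so that the divergent contributions $-(1\pm\varphi)\varphi''$ and $-(n-k-1)\varphi'\cot r$ in $nH_\varepsilon$ combine into the benign terms $\varphi'(\frac{1}{r}-\cot r)$ and $\frac{\varphi_\varepsilon}{r}\cdot\frac{\varphi'^3}{A^{3/2}}$ (the paper packages these as $h_1^\pm$), bound the principal curvatures and integrate $|\B_\varepsilon|^{n-k}$ after the rescaling $s=r/\varepsilon$, and deduce $\|H_\varepsilon-1\|_1\to0$ from the $L^\infty$ bound together with uniform convergence away from the singular sphere (the paper invokes the Lebesgue theorem in one line where you split at $r=\varepsilon^{1/4}$, which is just a more explicit version of the same idea). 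Two minor slips to fix: the threshold $\varphi'_\varepsilon=1$ occurs at $r=2^{1/(2(n-k-1))}\varepsilon$ rather than $\sqrt{2}\varepsilon$ (your value is only correct for $k=n-2$, though any fixed multiple of $\varepsilon$ works for the argument), and the bound ``$\varphi'/A^{3/2}\leqslant1$ since $A\geqslant\varphi'^2$'' is not valid when $\varphi'$ is small---you also need $A\geqslant(1\pm\varphi)^2\geqslant(1-b_\varepsilon)^2$ there.
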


\begin{remark}
We have $\|\B _\varepsilon\|_q\to\infty$ when $\varepsilon \to 0$, for any $q>n-k$.
\end{remark}

\begin{proof} From the lemma \ref{courbmoy} and the definition of $\fieps$, $\H_\varepsilon$ and $|\B _\varepsilon|$ converge uniformly to $1$ on any compact of $M_\varepsilon^k\setminus M_\varepsilon^+\cap M_\varepsilon^-$. On the neighborhood of $ M_\varepsilon^+\cap M_\varepsilon^-$, we have $n(\Heps)_x=n\heps(r)$ and $n \heps\leqslant\hepsi+\hepsd+\hepst$, where
$$\displaylines{
\hepsd(r)=k\frac{(\fieps'^2+(1\pm\fieps)^2)^{-1/2}}{1\pm\fieps}\fieps'\tan(r)\leqslant\frac{k}{1-b_\varepsilon}\tan\frac{\pi}{5}\hfill\cr
\hepst(r)=(n-1)(\fieps'^2+(1\pm\fieps)^2)^{-1/2}\hfill\cr
\hfill
+(\fieps'^2+(1\pm\fieps)^2)^{-3/2}((1\pm\fieps)^2+2\fieps'^2)\leqslant \frac{n+1}{1-b_\varepsilon}\hfill
}$$
and by differential Equation \eqref{equadif} we have
$$\displaylines{\hepsi(r)=\Bigl|(n-k-1)\frac{(\fieps'^2+(1\pm\fieps)^2)^{-1/2}}{1\pm\fieps}\fieps'\cot(r)+(\fieps'^2+(1\pm\fieps)^2)^{-3/2}(1\pm\fieps)\fieps''\Bigr|\cr
\leqslant(n-k-1)\frac{(\fieps'^2+(1\pm\fieps)^2)^{-1/2}}{1\pm\fieps}\fieps'\Bigl|\cot(r)-\frac{1}{r}\Bigr|\hfill\cr
+\frac{n-k-1}{r}\Bigl|\frac{(\fieps'^2+(1\pm\fieps)^2)^{-1/2}}{1\pm\fieps}\fieps'-(\fieps'^2+(1\pm\fieps)^2)^{-3/2} (1\pm\fieps)(1+\fieps'^2)\fieps'\Bigr|\cr
\leqslant\frac{n}{1-b_\varepsilon}\Bigl(\frac{1}{r}-\cot(r)\Bigr)\hfill\cr
+\frac{n\left(\fieps'^2+(1\pm\fieps)^2\right)^{-3/2}}{r(1\pm\fieps)}\fieps'\Bigl|\fieps'^2+(1\pm\fieps)^2-(1\pm\fieps)^2(1+\fieps'^2)\Bigr|\cr
\leqslant \frac{n}{1-b_\varepsilon}\Bigl(\frac{1}{r}-\cot(r)\Bigr)+\frac{n}{r}\fieps\frac{2\pm\fieps}{1\pm\fieps}\frac{\fieps'^3}{[\fieps'^2+(1\pm\fieps)^2]^{3/2}}\hfill\cr
\leqslant \frac{n}{1-b_\varepsilon}\Bigl(\frac{1}{r}-\cot(r)\Bigr)+\frac{n}{r}\fieps\frac{2+b_\varepsilon}{1-b\varepsilon}\hfill}$$
Since $\displaystyle\frac{\fieps}{r}=\frac{\varepsilon}{r}\int_1^{r/\varepsilon}\frac{dt}{\sqrt{t^{2(n-k-1)}-1}}\leqslant\frac{\varepsilon}{r}\int_1^{r/\varepsilon}\frac{dt}{\sqrt{t^{2}-1}}$ and $\frac{1}{x}\int_1^x\frac{dt}{\sqrt{t^2-1}}\sim_{+\infty}\frac{\ln x}{x}$, we get that $\hepsi$ is bounded on $M_\varepsilon^k$, hence $\H_\varepsilon$ is bounded on $M_\varepsilon$. By the Lebesgue theorem we have $\|\H_\varepsilon-1\|_1\to 0$. 

We now bound $\|\B _\varepsilon\|_q$ with $q=n-k$. The volume element at the neighbourhood of $M^+_\varepsilon\cap M^-_\varepsilon$ is 
\begin{equation}
  \label{vol}
  \voleps=(1\pm\fieps)^{n}(1+(\frac{\fieps'}{1\pm\fieps})^2)^{1/2}\sin^{n-k-1}(r)\cos^k(r) dv_{n-k-1}dv_k dr
\end{equation}
where $dv_{n-k-1}$ and $dv_k$ are the canonical volume element of $\S^{n-k-1}$ and $\S^k$ respectively. By Lemma \ref{courbmoy} and Equation \eqref{equadif}, we have 
$$\displaylines{|\B _\varepsilon|^q\voleps=\frac{1}{{(\coef)}^\frac{q}{2}}\max\Bigl(\bigl|1-\frac{\fieps'}{1\pm\fieps}\cot r\bigr|, \bigl|1+\frac{\fieps'}{1\pm\fieps}\tan r\bigr|,\hfill\cr
\hfill\bigl|1+\frac{\fieps'^2+(n-k-1)(1\pm\fieps)(1+\fieps'^2)\fieps'/r}{\coef}\bigr|\Bigr)\Bigr]^q \voleps}$$
Noting that $\frac{x}{\sqrt{1+x^2}}\leqslant\min(1,x)$, it is easy to see that, if we set $h_\varepsilon=\min(1,|\varphi'_\varepsilon|)$ 
\begin{align*}\frac{\bigl|1-\frac{\fieps'}{1\pm\fieps}\cot r\bigr|}{\sqrt{\coef}}&\leqslant\frac{1}{\sqrt{\fieps'^2+(1\pm\fieps)^2}}+\frac{\frac{\fieps'}{1\pm\fieps}}{\sqrt{\frac{\fieps'^2}{(1\pm\fieps)^2}+1}}\frac{\cot r}{1\pm\fieps}\\
 &\leqslant\frac{1}{1-\fieps}+\frac{h_{\varepsilon}\cot r}{(1-\fieps)^2}\leqslant4\left(1+\frac{h_{\varepsilon}}{r}\right)
\end{align*}
Similarly for $r\in[\varepsilon, \pi/5+\varepsilon]$ and $\varepsilon$ small enough, we have
$$\frac{\bigl|1+\frac{\fieps'}{1\pm\fieps}\tan r\bigr|}{\sqrt{\coef}}\leqslant 4(1+h_\varepsilon\tan r)\leqslant 8(1+h_\varepsilon r)\leqslant 8\bigl(1+\frac{h_\varepsilon}{r}\bigr)$$
And since $\fieps'=0$ for $r\geqslant\pi/5+\varepsilon$, this inequality is also true for $r\in(\varepsilon,\pi/2]$. Moreover
\begin{align*}&\frac{1}{\sqrt{\coef}}\Bigl|1+\frac{\fieps'^2+(n-k-1)(1\pm\fieps)(1+\fieps'^2)\fieps'/r}{\coef}\Bigr|\\
&\leqslant\frac{1}{1\pm\fieps}+\frac{\fieps'^2}{(\coef)^{3/2}}+\frac{n}{r}\frac{(1\pm\fieps)(1+\fieps'^2)}{\coef}\frac{|\fieps'|}{(\coef)^{1/2}}\\
&\leqslant\frac{2}{1\pm\fieps}+\frac{nh_\varepsilon}{r(1-\fieps)}\frac{(1\pm\fieps)(1+\fieps'^2)}{\coef}\leqslant\frac{2}{1\pm\fieps}+2\frac{nh_\varepsilon}{r}\frac{(1+\fieps)^2}{(1-\fieps)^2}\\
&\leqslant 2\bigl(2+9\frac{nh_\varepsilon}{r}\bigr)
\end{align*}
It follows that
\begin{align*}
|\B _{\varepsilon}|^q\voleps&\leqslant C(n,k)\bigl(1+\frac{h_\varepsilon}{r}\bigr)^q\voleps\leqslant C(n,k)(r+h_\varepsilon)^q r^{-1}\bigl(1+\frac{\fieps'}{1\pm\fieps}\bigr)dv_{n-k-1}dv_{k}dr\\
&\leqslant C(n,k)r^{-1}(r+h_\varepsilon)^q\Bigl(1+\frac{1}{\sqrt{(r/\varepsilon)^{2(n-k-1)}-1}}\Bigr)dv_{n-k-1}dv_kdr\end{align*}
Now
$$\displaylines{\int_{\meps^k}|\B _\varepsilon|^q\voleps\leqslant C(n,k)\Bigl(\int_\varepsilon^{2^\frac{1}{2(n-k-1)}\varepsilon}r^{-1}\Bigl(1+\frac{1}{\sqrt{(r/\varepsilon)^{2(n-k-1)}-1}}\Bigr)dr\hfill\cr
\hfill+\int_{2^\frac{1}{2(n-k-1)}\varepsilon}^{2a+\varepsilon} r^{n-k-1}\Bigl(1+\frac{1}{r\sqrt{(r/\varepsilon)^{2(n-k-1)}-1}}\Bigr)^qdr\Bigr)\cr
\leqslant C(n,k)\Bigl(\int_1^{2^\frac{1}{2(n-k-1)}}s^{-1}\Bigl(1+\frac{1}{\sqrt{s^{2(n-k-1)}-1}}\Bigr)ds+\int_{2^\frac{1}{2(n-k-1)}}^{2a/\varepsilon+1} s^{n-k-1}\bigl(\varepsilon+\frac{1}{s^{q}}\bigr)^qds\Bigr)\cr\\}
$$
Since $\varepsilon^\frac{-1}{q}\leqslant\frac{2a}{\varepsilon}+1$ for $\varepsilon$ small enough we have
$$\displaylines{\int_{\meps^k}|\B _\varepsilon|^q\voleps\leqslant C(n,k)\Bigl(1+\int_{2^\frac{1}{2(n-k-1)}}^{\varepsilon^\frac{-1}{q}} \frac{2s^{n-k-1}}{s^{q^2}}ds+\int_{\varepsilon^\frac{-1}{q}}^{2a/\varepsilon+1} 2s^{n-k-1}\varepsilon^qds\Bigr)\cr
\leqslant C(n,k)\bigl(1+\varepsilon^{n-k-1}\bigr)\\}$$
which remains bounded when $\varepsilon\to0$.
\end{proof}

Since $\varphi_\varepsilon$ is constant outside a neighborhood of $M_\varepsilon^+\cap M_\varepsilon^-$ (given by $a$), $M^k_\varepsilon$ is a smooth submanifold diffeomorphic to the sum of two spheres $\S^n$ along a (great) subsphere $\S^k\subset\S^n$.
\begin{center}
\includegraphics[width=1.5cm]{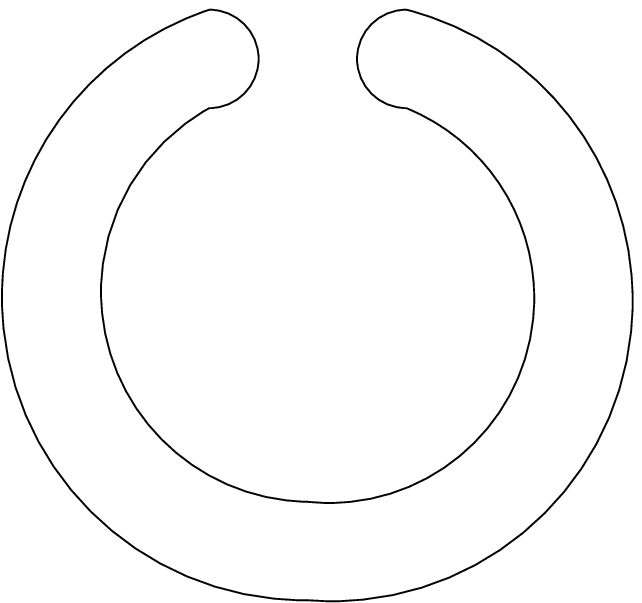}
\end{center}
If we denote $\tilde{M}^k_\varepsilon$ one connected component of the points of $M^k_\varepsilon$ corresponding to $r\leqslant 3a$, we get some pieces of hypersurfaces
\begin{center}
\includegraphics[width=1cm]{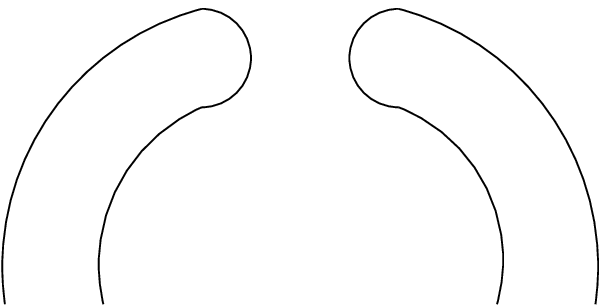}
\end{center}
that can be glued together along pieces of spheres of constant curvature to get a smooth submanifold $M_\varepsilon$, diffeomorphic to $p$ spheres $\S^n$ glued each other along $l$ subspheres $S_i$, and with curvature satisfying the bounds of Theorem \ref{ctrexple1} (when all the subspheres have dimension $0$) or of the remark before Theorem \ref{Lipschitz}.
\begin{center}
\includegraphics[width=2cm]{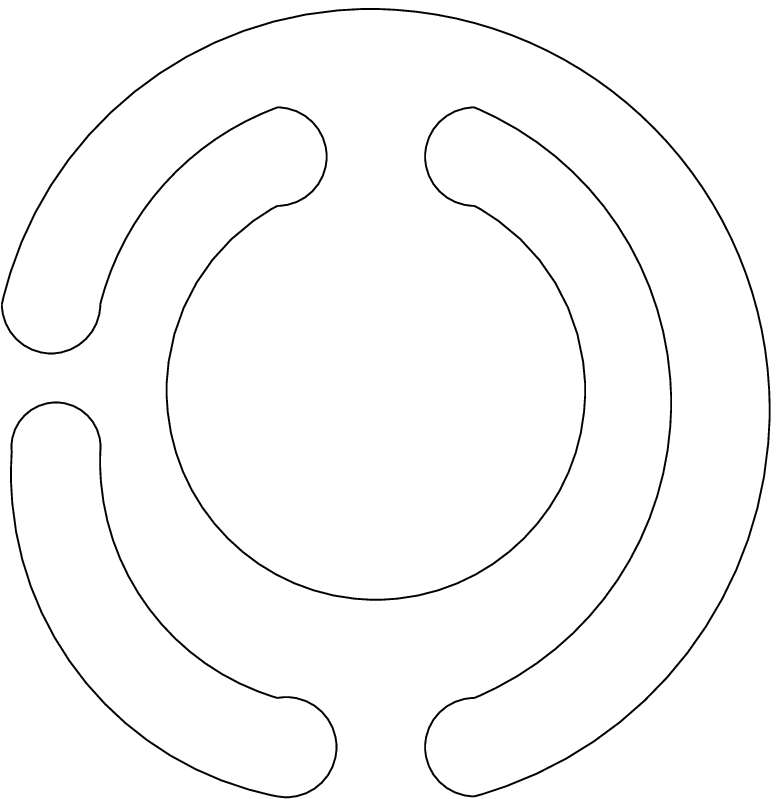}
\end{center}
Since the surgeries are performed along subsets of capacity zero, the manifold constructed have a spectrum close to the spectrum of $p$ disjoints spheres of radius close to $1$ (i.e. close to the spectrum of the standard $\S^n$ with all multiplicities multiplied by $p$).
More precisely, we set $\eta\in[2\varepsilon,\frac{\pi}{20}]$, and for any subsphere $S_i$, we set $N_{i,\eta,\varepsilon}$ the tubular neighborhood of radius $\eta$ of the submanifold $\tilde{S}_i=M_{\varepsilon,i}^+\cap M_{\varepsilon,i}^-$ in the local parametrization of $M_\varepsilon$ given by the map $\Phi_{\varphi_{\varepsilon,i}}$ associated to the subsphere $S_i$. We have $\meps=\Omega_{1,\eta,\varepsilon}\cup\cdots\cup \Omega_{p,\eta,\varepsilon}\cup N_{1,\eta, \varepsilon}\cup \cdots\cup N_{l,\eta,\varepsilon}$ where $\Omega_{i,\eta,\varepsilon}$ are the connected component of $M\setminus\cup_i N_{i,\eta,\varepsilon}$. The $\Omega_{i,\eta,\varepsilon}$ are diffeomorphic to some $S_{i,\eta}$ (which does not depend on $\varepsilon$ and $\eta$) open set of $\S^n$ which are complements of neighborhoods of subspheres of dimension less than $n-2$ and radius $\eta$, endowed with metrics which converge in $\mathcal{ C}^1$ topology to standard metrics of curvature $1$ on $S_{i,\eta}$. Indeed, $\varphi_\varepsilon$ converge to 0 in topology $\mathcal{ C}^2$ on $[r^{i,\pm}_{\varepsilon,\eta},\frac{\pi}{2}]$, where $\int_\varepsilon^{r_{\varepsilon,\eta}^{i,\pm}}\sqrt{(1\pm\varphi_{\varepsilon,i})^2+(\varphi_{\varepsilon,i}')^2}=\eta$ since it converges in $\mathcal{ C}^1$ topology on any compact of $[\varepsilon,\frac{\pi}{2}]$ and since we have
$$\displaylines{\eta\geqslant\int_\varepsilon^{r^{i,\pm}_{\varepsilon,\eta}}(1-b_{i,\varepsilon})\,dt=(r^{i,\pm}_{\varepsilon,\eta}-\varepsilon)(1-b_{i,\varepsilon})
\cr
\eta\leqslant\int_\varepsilon^{r^{i,\pm}_{\varepsilon,\eta}}(1+b_{i,\varepsilon})\,dt+\int_\varepsilon^{r^{i,\pm}_{\varepsilon,\eta}}\frac{dt}{\sqrt{(\frac{t}{\varepsilon})^{2(n-k-1)}-1}}=(r^{i,\pm}_{\varepsilon,\eta}-\varepsilon)(1+b_{i,\varepsilon})\cr\hspace{7cm}+\varepsilon\int_1^{+\infty}\frac{dt}{\sqrt{t^{2(n-k-1)}-1}}}$$
so $r_{\varepsilon,\eta}^\pm\to\eta$ when $\varepsilon\to 0$.
 So the spectrum of $\cup_i\Omega_{i,\eta,\varepsilon}\subset M_\varepsilon$ for the Dirichlet problem converges to the spectrum of $\amalg_i S_{i,\eta}\subset \amalg_i\S^n$ for the Dirichlet problem as $\varepsilon$ tends to $0$ (by the min-max principle). Since any subsphere of codimension at least $2$ has zero capacity in $\S^n$, we have that  the spectrum of $\amalg_i S_{i,\eta}\subset\amalg_i \S^n$ for the Dirichlet problem converges to the spectrum of $\amalg_i\S^n$ when $\eta$ tends to $0$ (see for instance \cite{Co} or adapt what follows). Since the spectrum of $\amalg_i\S^n$ is the spectrum of $\S^n$ with all multiplicities multiplied by $p$, by diagonal extraction we infer the existence of two sequences $(\varepsilon_m)$ and $(\eta_m)$ such that $\varepsilon_m\to0$, $\eta_m\to 0$ and  the spectrum of $\cup_i\Omega_{i,\eta_m,\varepsilon_m}\subset M_{\varepsilon_m}$ for the Dirichlet problem converges to the spectrum of $\S^n$ with all multiplicities multiplied by $p$.
Finally, note that $\lambda_l(M_\varepsilon)\leqslant\lambda_l(\cup_i\Omega_{i,2\eta,\varepsilon})$ for any $l$ by the Dirichlet principle. 

On the other hand, by using functions of the distance to the $\tilde{S}_i$ we can easily construct on $M_\varepsilon$ a function $\psi_\varepsilon$ with value in $[0,1]$, support in $\cup_i\Omega_{i,\eta,\varepsilon}$, equal to $1$ on $\cup_i\Omega_{i,2\eta,\varepsilon}$ and whose gradient satisfies $|d \psi_\varepsilon|_{g_\varepsilon}^{~}\leqslant\frac{2}{\eta}$. It readily follows that
$$\|1-\psi_\varepsilon^2\|_1^{~}+\|d\psi_\varepsilon\|^{2}_2\leqslant(1+\frac{4}{\eta^2})\sum_i\frac{\Vol N_{i,2\eta,\varepsilon}}{\Vol M_\varepsilon}$$
To estimate $\sum_i\Vol N_{i,2\eta,\varepsilon}$, note that $N_{i,2\eta,\varepsilon}$ corresponds to the set of points with $r^{i,\pm}\leqslant r^{i,\pm}_{\varepsilon,2\eta}$ in the parametrization of $M_\varepsilon$ given by $\Phi_{\varphi_{\varepsilon,i}}$ at the neighborhood of $\tilde{S}_i$, where, as above, $r^{i,\pm}_{\varepsilon,2\eta}$ is given by
$$\int_\varepsilon^{r^{i,\pm}_{\varepsilon,2\eta}}\sqrt{(1\pm\varphi_{\epsilon,i})^2+(\varphi_{\epsilon,i}')^2}=2\eta$$
hence satisfies $\frac{1}{2}(r^{i,\pm}_{\varepsilon,2\eta}-\varepsilon)\leqslant 2\eta$ (since we have $1-\varphi_{\varepsilon,i}\geqslant\frac{1}{2}$).
By formula \ref{vol}, we have 
$$\displaylines{\Vol N_{i,2\eta,\varepsilon}\leqslant C(n)\int_\varepsilon^{r^-_\eta}(1-\varphi_{\varepsilon,i})^{n-1}\sqrt{(1-\varphi_{\varepsilon,i})^2+(\varphi_{\varepsilon,i}')^2}t^{n-k-1}dt\hfill\cr
\hfill+ C(n)\int_\varepsilon^{r^+_\eta}(1+\varphi_{\varepsilon,i})^{n-1}\sqrt{(1+\varphi_{\varepsilon,i})^2+(\varphi_{\varepsilon,i}')^2}t^{n-k-1}dt\hfill\cr
\hfill\leqslant C(n)(4\eta+\varepsilon)^{n-k-1}\eta\leqslant C(n,k)\eta^{n-k}}$$
where we have used that $\varphi_{\varepsilon,i}\leqslant2$ and $2\varepsilon\leqslant\eta$. We then have
$$\|1-\psi_\varepsilon^2\|_1^{~}+\|d\psi_\varepsilon\|_2^2\leqslant C(n,k,l,p)\eta^{n-k}$$
To end the proof of the fact that $M_{\varepsilon_m}$ has a spectrum close to that of $\cup_i\Omega_{i,\eta_m,\varepsilon_m}$ we need the following proposition, whose proof is a classical Moser iteration (we use the Simon and Michael Sobolev Inequality).
\begin{proposition}\label{norminffoncprop}  For any $q>n$ there exists a constant $C(q,n)$ so that if $(M^n,g)$ is any Riemannian manifold isometrically immersed in $\R^{n+1}$ and $E_N=\langle f_0,\cdots,f_N\rangle$is the space spanned by the eigenfunctions associated to $\lambda_0\leqslant\cdots\leqslant\lambda_N$, then for any $f\in E_N$ we have $$\|f\|_{\infty}\leqslant C(q,n)\left((v_M)^{1/n}(\lambda_N^{1/2}+\|\H\|_q)\right)^{\gamma}\|f\|_2$$
where $\gamma=\frac{1}{2}\frac{qn}{q-n}$.  
\end{proposition}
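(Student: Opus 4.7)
The argument is a Moser iteration on $f$ built from the Michael-Simon Sobolev inequality for immersed submanifolds. The crucial input from $f\in E_N$ is that $E_N$ is $\Delta$-invariant with operator norm $\leqslant\lambda_N$, so that $\|\Delta f\|_2\leqslant\lambda_N\|f\|_2$ and $\|\nabla f\|_2^2\leqslant\lambda_N\|f\|_2^2$. The first step is to upgrade the geometric Michael-Simon inequality
\[
\Bigl(\tfrac{1}{v_M}\int_M u^{n/(n-1)}\Bigr)^{(n-1)/n}\leqslant C(n)\,v_M^{1/n}\tfrac{1}{v_M}\int_M(|\nabla u|+|\H|u)
\]
into an $L^2$-Sobolev inequality by applying it to $u=v^{2(n-1)/(n-2)}$, which produces $\|v\|_{2^*}^2\leqslant C(n)v_M^{2/n}(\|\nabla v\|_2^2+\||\H|v\|_2^2)$ with $2^*=2n/(n-2)$. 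Since $q>n$, the Hölder exponent $2q/(q-2)$ arising from $\||\H|v\|_2\leqslant\|\H\|_q\|v\|_{2q/(q-2)}$ lies strictly between $2$ and $2^*$; log-convex interpolation (with $\theta=(q-n)/q$) followed by Young's inequality absorbs the $\|v\|_{2^*}$-contribution into the left-hand side and yields the engine estimate
\[
\|v\|_{2^*}^2\leqslant C(n,q)\Bigl[(v_M^{1/n})^2\|\nabla v\|_2^2+(v_M^{1/n}\|\H\|_q)^{2q/(q-n)}\|v\|_2^2\Bigr].
\]

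The iteration step is obtained by inserting $v=|f|^s$ with $s\geqslant 1$. Integration by parts gives $\|\nabla|f|^s\|_2^2=\frac{s^2}{(2s-1)v_M}\int_M f^{2s-1}\mathrm{sgn}(f)\,\Delta f$, and Cauchy-Schwarz combined with $\|\Delta f\|_2\leqslant\lambda_N\|f\|_2$ yields $\|\nabla|f|^s\|_2^2\leqslant\frac{s^2\lambda_N}{2s-1}\|f\|_2\|f\|_{4s-2}^{2s-1}$. Log-convex interpolation of the auxiliary $L^{4s-2}$-norm between $\|f\|_{2s}$ and the left-hand side norm $\|f\|_{2s\beta}$ (with $\beta=n/(n-2)$), together with a Young absorption, produces the one-step reverse-Hölder estimate (modulo polynomial-in-$s$ corrections that are negligible in the iteration)
\[
\|f\|_{2s\beta}\leqslant\bigl[C(n,q)\,v_M^{1/n}(\lambda_N^{1/2}+\|\H\|_q)\bigr]^{1/s}\|f\|_{2s}.
\]
Iterating along $s_k=\beta^k$ (with $s_0=1$) and multiplying the successive estimates gives $\|f\|_\infty=\lim_k\|f\|_{2s_k}\leqslant\bigl[C(n,q)v_M^{1/n}(\lambda_N^{1/2}+\|\H\|_q)\bigr]^{\sum_k 1/s_k}\|f\|_2$; since $\sum_k\beta^{-k}=\beta/(\beta-1)=n/2$, and the Young absorption of $\|\H\|_q$ in Step~1 adds a factor $q/(q-n)$ to the effective exponent of that quantity, the cumulative power comes out to exactly $\gamma=qn/(2(q-n))$, as claimed.

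The main technical obstacle is the mismatch introduced by the Cauchy-Schwarz bound on $\int f^{2s-1}\Delta f$: it brings in the $L^{4s-2}$-norm of $f$, which does not directly fit the $L^{2s}\to L^{2s\beta}$ pattern of Moser's iteration. The required interpolation $\|f\|_{4s-2}\leqslant\|f\|_{2s}^\theta\|f\|_{2s\beta}^{1-\theta}$ with $\theta\in(0,1)$ demands $4s-2\leqslant 2s\beta$, a condition automatic when $n\leqslant 4$ but failing for large~$s$ once $n\geqslant 5$. In the high-dimensional regime one can instead run the Moser iteration on the Christoffel function $K(x)=\sum_{i\leqslant N}f_i(x)^2$, which satisfies the \emph{pointwise} sub-eigenvalue inequality $\Delta K\leqslant 2\lambda_N K$ (since $\Delta(f_i^2)=2\lambda_if_i^2-2|\nabla f_i|^2$); the Moser iteration applies to this sub-solution without the $L^{4s-2}$ obstruction, and the pointwise bound $f(x)^2\leqslant K(x)\,v_M\|f\|_2^2$ (Cauchy-Schwarz on the spectral expansion of $f$) transfers the resulting $L^\infty$-estimate on $K$ back to $f$.
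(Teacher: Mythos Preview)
Your overall strategy---Moser iteration driven by the Michael--Simon Sobolev inequality---is exactly what the paper indicates; the paper itself gives no detail beyond the single sentence ``whose proof is a classical Moser iteration (we use the Simon and Michael Sobolev Inequality)''. Your derivation of the interpolated $L^2$-Sobolev engine and of the iteration for a \emph{single} eigenfunction are correct and produce the stated exponent~$\gamma$.

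The gap is in your treatment of a general $f\in E_N$. You are right that the Cauchy--Schwarz step $\int |f|^{2s-1}\mathrm{sgn}(f)\,\Delta f\leqslant\|f\|_{4s-2}^{2s-1}\,\|\Delta f\|_2$ obstructs the iteration once $4s-2>2s\beta$, i.e.\ for large~$s$ when $n\geqslant5$. But your Christoffel-function remedy does not yield the claimed $N$-independent constant: Moser iteration on the sub-solution $K=\sum_{i\leqslant N}f_i^2$ controls $\|K\|_\infty$ only in terms of $\|K\|_2$, and after the interpolation $\|K\|_2\leqslant\|K\|_\infty^{1/2}\|K\|_1^{1/2}$ one is left with $\|K\|_1=\sum_i\|f_i\|_2^2=N+1$. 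Feeding this into $f(x)^2\leqslant K(x)\,\|f\|_2^2$ (no factor $v_M$ in the paper's normalised norms) gives
\[
\|f\|_\infty\leqslant C\bigl(v_M^{1/n}(\lambda_N^{1/2}+\|\H\|_q)\bigr)^{\gamma}\sqrt{N+1}\,\|f\|_2,
\]
with an extraneous $\sqrt{N+1}$. This factor \emph{can} be absorbed, but only via an additional ingredient you do not mention: a Weyl-type counting bound $N+1\leqslant C(n,q)\bigl(v_M^{1/n}(\lambda_N^{1/2}+\|\H\|_q)\bigr)^{n}$, itself a consequence of the same Sobolev inequality through the heat-kernel trace. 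A cleaner route, which bypasses the $L^{4s-2}$ difficulty entirely, is to pass from the Sobolev inequality to ultracontractivity of $e^{-t\Delta}$ and combine $\|e^{-t\Delta}\|_{2\to\infty}$ with $\|e^{t\Delta}f\|_2\leqslant e^{t\lambda_N}\|f\|_2$ for $f\in E_N$, optimising in~$t$. For the paper's actual application in Section~\ref{pot3} the issue is harmless, since only a bound of the form $\|f\|_\infty\leqslant C(p,N,n)\|f\|_2$ is used there.
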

Since we already know that $\lambda_\sigma(M_{\varepsilon_m})\leqslant\lambda_\sigma(\cup_i\Omega_{i,\eta_m,\varepsilon_m})\to\lambda_{E(\sigma/p)}(\S^n)$ for any $\sigma$ when $m\to\infty$, we infer that for any $N$ there exists $m=m(N)$ large enough such that on $M_{\varepsilon_m}$ and for any $f\in E_N$, we have (with $q=2n$ and since $\|\H\|_\infty\leqslant C(n)$)
$$\|f\|_{\infty}\leqslant C(p,N,n)\|f\|_2$$
By the previous estimates, if we set
$$L_{\varepsilon_m}:f\in E_N\mapsto\psi_{\varepsilon_m} f\in \H^1_0(\cup_i\Omega_{i,\eta_m,\varepsilon_m})$$
then we have
$$\|f\|_2^2\geqslant\|L_{\varepsilon_m}(f)\|_2^2\geqslant\|f\|_2^2-\|f\|_\infty^2\|1-\psi_{\varepsilon_m}^2\|_1\geqslant\|f\|_2^2\bigl(1-C(k,l,p,N,n)\eta_m^{n-k}\bigr)$$
and 
$$\displaylines{\|dL_{\varepsilon_m}(f)\|_2^2=\frac{1}{\Vol M_{\varepsilon_m}}\int_{M_{\varepsilon_m}}|fd\psi_{\varepsilon_m}+\psi_{\varepsilon_m} df|^2\hfill\cr
\leqslant(1+h)\|df\|^2_2+(1+\frac{1}{h})\frac{1}{\Vol M_{\varepsilon_m}}\int_{M_{\varepsilon_m}}f^2|d\psi_{\varepsilon_m}|^2\cr
\hfill\leqslant(1+h)\|df\|^2_2+(1+\frac{1}{h})C(k,l,p,N,n)\|f\|_2^2\eta_m^{n-k}}$$
for any $h>0$. We set $h=\eta_m^\frac{n-k}{2}$. For $m=m(k,l,p,N,n)$ large enough, $L_{\varepsilon_m}:E_N\to \H^1_0(\cup_i\Omega_{i,\eta_m,\varepsilon_m})$ is injective and for any $f\in E_N$, we have
$$\frac{\|dL_{\varepsilon_m}(f)\|_2^2}{\|L_{\varepsilon_m}(f)\|_2^2}\leqslant (1+C(k,l,p,N,n)\eta_m^\frac{n-k}{2})\frac{\|df\|_2^2}{\|f\|_2^2}+C(k,l,p,N,n)\eta_m^\frac{n-k}{2}$$
By the min-max principle, we infer that for any $\sigma\leqslant N$, we have
$$\lambda_\sigma(M_{\varepsilon_m})\leqslant\lambda_\sigma(\cup_i\Omega_{i,\eta_m,\varepsilon_m})\leqslant(1+C(k,l,p,N,n)\eta_m^\frac{n-k}{2})\lambda_\sigma(M_{\varepsilon_m})+C(k,l,p,N,n)\eta_m^\frac{n-k}{2}$$
Since $\lambda_\sigma(\cup_i\Omega_{i,\eta_M,\varepsilon_m})\to \lambda_{E(\sigma/p)}(\S^n)$, this gives that $\lambda_\sigma(M_{\varepsilon_m})\to \lambda_{E(\sigma/p)}(\S^n)$ for any $\sigma\leqslant N$. By diagonal extraction we get the sequence of manifolds $(M_j)$ of Theorem \ref{ctrexple1}.

To construct the sequence of Theorem \ref{ctrexple3}, we consider the sequence of embedded submanifolds $(M_j)$ of Theorem \ref{ctrexple1} for $p=2$, $k=n-2$ and $l=1$. Each element of the sequence admits a covering of degree $d$ given by $y\mapsto y^d$ in the  local charts associated to the maps $\Phi$. We endow these covering with the pulled back metrics. Arguing as above, we get that the spectrum of the new sequence converge to the spectrum of two disjoint copies of 
$$\bigl(\S^1\times\S^{n-2}\times[0,\frac{\pi}{2}],dr^2+d^2\sin^2rg_{\S^1}+\cos^2rg_{\S^{n-2}}\bigr).$$

\appendix
\section{Proof of Lemma \ref{courbmoy}}

Let $(u,v,h)\in T_x S_{\varepsilon}$ and put $w=d(\Phi_{\varphi})_x (u,v,h)\in T_q X_{\varphi}$ where $S_{\varepsilon}=\S^{n-k-1}\times\S^{k}\times \ieps$. An easy computation shows that 
\begin{align}\label{difphi}w&=(1+\varphi(r))((\sin r) u+(\cos r) v)\notag\\
&+\varphi'(r)((\sin r) y+(\cos r) z)h+(1+\varphi(r))((\cos r)y-(\sin r)z)h\end{align}
We set 
$$\tilde{N}_q=-\varphi'(r)((\cos r) y-(\sin r) z)+(1+\varphi(r))((\sin r) y+(\cos r) z)$$
and $\displaystyle N_q=\frac{\tilde{N}_q}{\left(\coeff\right)^{1/2}}$ is a unit normal vector field on $X_{\varphi}$.
Then we have
\begin{align}\label{defsec} \B _q(\varphi)(w,w)&=\left\langle\nabla_{w}^0 N,w\right\rangle\notag=\left( \coeff\right)^{-1/2}\bigl\langle\nabla_{w}^0\tilde{N},w\bigr\rangle\notag\\
&=\left( \coeff\right)^{-1/2}\Bigl\langle\sum_{i=1}^{n+1} w(\tilde{N}^i)\partial_i,w\Bigr\rangle\end{align}
where $(\partial_i)_{1\leqslant i\leqslant n+1}$ is the canonical basis of $\R^{n+1}$. A straightforward computation shows that
\begin{align*}\sum_{i=1}^{n+1}w(\tilde{N}^i)\partial_i=&-\varphi'(r)((\cos r) u-(\sin r) v)+(1+\varphi(r))((\sin r) u+(\cos r) v)\\
&-\varphi''(r)((\cos r) y-(\sin r) z) h+2\varphi'(r)((\sin r) y+(\cos r) z)h\\
&+(1+\varphi(r))((\cos r) y-(\sin r) z) h\end{align*}
Reporting this in (\ref{defsec}) and using (\ref{difphi}) we get 
$$\displaylines{\B _q(\varphi)((u,v,h),(u,v,h))=\frac{1}{\sqrt{\coeff}}\Bigl[-\varphi'(r)\bigl(1+\varphi(r)\bigr)\sin r \cos r (|u|^2-|v|^2)\cr
\hfill+(1+\varphi(r))^2 (\sin^2 r |u|^2 +\cos^2 r |v|^2) -\bigl(1+\varphi(r)\bigr)\varphi''(r)h^2+2\varphi'^2(r)h^2+(1+\varphi(r))^2h^2\Bigr]}$$ 
Now let $(u_i)_{1\leqslant i\leqslant n-k-1}$ and $(v_i)_{1\leqslant i\leqslant k}$ be orthonormal bases of respectively $\S^{n-k-1}$ at $y$ and $\S^{k}$ at $z$. We set $g=\Phi_{\varphi}^{\star} can$ and $\xi=(0,0,1)$, then we have
$$\displaylines{
\hfill g(u_i,u_j)=(1+\varphi(r))^2\sin^2 r\delta_{ij},\hfill g(v_i,v_j)=(1+\varphi(r))^2\cos^2 r\delta_{ij}, \hfill g(u_i,v_j)=0,\hfill\cr
\hfill g(\xi,\xi)=\coeff,\hfill g(u_i,\xi)=g(v_j,\xi)=0.\hfill}$$
Now setting $\tilde{u}_i=d(\Phi_{\varphi})_x(u_i)$, $\tilde{v}_i=d(\Phi_{\varphi})_x(u_i)$ and  $\tilde{\xi}=d(\Phi_{\varphi})_x(\xi)$, the relation above allows us to compute the trace and norm
\begin{align*}
&|\B_{q}(\varphi)|=\max\Bigl(\max_{i}\frac{|\B_q(\varphi)(\tilde{u}_i,\tilde{u}_i)|}{g(u_i,u_i)},\max_j\frac{|\B_q(\varphi)(\tilde{v}_j,\tilde{v}_j)|}{g(v_j,v_j)},\frac{|\B_q(\varphi)(\tilde{\xi},\tilde{\xi})|}{g(\xi,\xi)}\Bigl)\\
&{=}\frac{1}{\sqrt{\coeff}}\max\Bigl(\bigl|1{-}\frac{\varphi '}{1{+}\varphi}\cot r\bigr|,\bigl|1+\frac{\varphi '}{1{+}\varphi}\tan r\bigr|,\bigl|1{+}\frac{(\varphi')^2-(1+\varphi)\varphi''}{\coeff}\bigr|\Bigr)\end{align*}
of the second fundamental form.


\begin{thebibliography}{99}
\bibitem{Au} {\sc E. Aubry}, {\em Pincement sur le spectre et le volume en courbure de Ricci positive},  Ann. Sci. \'{E}cole Norm. Sup. (4) {\bf 38} (2005), n°3, p. 387--405.
\bibitem{AGR1} {\sc E. Aubry, J.-F. Grosjean, J. Roth}, {\em Hypersurfaces with small extrinsic radius or large $\lambda_1$ in Euclidean spaces}, preprint (2010) arXiv:1009.2010v1.
\bibitem{AG1} {\sc E. Aubry, J.-F. Grosjean}, {\em Metric shape of hypersurfaces with small extrinsic radius or large $\lambda_1$}, preprint (2012).
\bibitem{Ann} {\sc C. Ann\'{e}}, {\em Spectre du Laplacien et \'{e}crasement d'anses}, Ann. Sci. \'{E}cole Norm. Sup. (4) {\bf 20} (1987), p. 271--280.
\bibitem{colgros} {\sc B. Colbois, J.-F. Grosjean}, {\em A pinching theorem for the first eigenvalue of the Laplacian on hypersurfaces of the Euclidean space}, Comment. Math. Helv. {\bf 82}, (2007), p. 175--195.
\bibitem{Co} {\sc G.~Courtois},  {\em Spectrum of manifolds with holes}, J. Funct. Anal. {\bf 134} (1995), no.1, p. 194--221.
\bibitem{HK} {\sc T. Hasanis, D. Koutroufiotis}, {\em Immersions of bounded mean curvature}, Arc. Math. {\bf 33}, (1979), p. 170--171.
\bibitem{Dell} {\sc S. Delladio} {\em On Hypersurfaces in $\R^{n+1}$ with Integral Bounds on Curvature}, J. Geom.
Anal., 11: 17-42, 2000.
\bibitem{GR} {\sc J.-F. Grosjean, J. Roth} {\em Eigenvalue pinching and application to the stability and the almost umbilicity of hypersurfaces}, to appear in Math. Z. 
\bibitem{Mi-Si} {\sc J. H. Michael, L. M. Simon,}
{\em Sobolev and mean-value inequalities on generalized submanifolds of $R\sp{n}$}, Comm. Pure Appl. Math. {\bf 26} (1973), p. 361--379.
\bibitem{Re} {\sc R.C. Reilly}, {\em On the first eigenvalue of the Laplacian for compact submanifolds of Euclidean space}, Comment. Math. Helv. {\bf 52}, (1977), p. 525--533.
\bibitem{Tak1} {\sc J. Takahashi}, {\em Collapsing of connected sums and the eigenvalues of the Laplacian}, J. Geom. Phys. {\bf 40} (2002), p. 201--208.
\end{thebibliography}
\end{document}